\setlist[description]{leftmargin=2em}
\algrenewcommand\algorithmicwhile{\textbf{While}}
\algrenewcommand\algorithmicfor{\textbf{For}}
\algrenewcommand\algorithmicdo{\textbf{Do}}
\algrenewcommand\algorithmicif{\textbf{If}}
\algrenewcommand\algorithmicthen{\textbf{Then}}
\algrenewcommand\algorithmicelse{\textbf{Else}}
\algrenewcommand\algorithmicend{\textbf{End}}
\algrenewcommand\algorithmicreturn{\textbf{Return}}
\theoremstyle{plain}
\newtheorem{lemma}{Lemma}[section]
\newtheorem{proposition}[lemma]{\textbf{Proposition}}
\newtheorem{theorem}[lemma]{\textbf{Theorem}}
\theoremstyle{definition}
\newtheorem{definition}[lemma]{\textbf{Definition}}
\newtheorem{example}[lemma]{\textbf{Example}}
\newtheorem{remark}[lemma]{Remark}
\newcommand{\N}{\mathbb{N}}
\newcommand{\Q}{\mathbb{Q}}
\newcommand{\C}{\mathbb{C}}
\newcommand{\A}{\mathbb{A}}
\newcommand{\p}{\mathbb{P}}
\newcommand{\tth}{\thinspace}
\newcommand{\mscr}{\mathscr}
\newcommand{\h}{\operatorname{h}}
\newcommand{\pp}[1]{[\![ {#1} ]\!]}
\newcommand{\OO}{\mathscr{O}}
\newcommand{\Hom}{\operatorname{Hom}}
\newcommand{\HHom}{\mathscr{H}\!\!om}
\newcommand{\HH}{\operatorname{H}}
\renewcommand{\tilde}{\widetilde}
\renewcommand{\hat}{\widehat}
\title{Reconstruction of surfaces with ordinary singularities from their silhouettes${}^{\ast}$}
\thanks{${}^{\ast}$ The final version of this work was published on SIAM J.\ Appl.\ Algebra Geometry, 3(3), 472-506, \href{https://doi.org/10.1137/18M1220911}{DOI:10.1137/18M1220911}. This work was supported by the Austrian Science Fund (FWF) through grants W1214-N15 (DK9), P26607, P31061, J4253, and by the Czech Ministry of Education, Youth and Sports through grant LO1506.}
\author{Matteo Gallet}
\address[MG]{Scuola Internazionale Superiore di Studi 
Avanzati/International School for Advanced Studies (SISSA/ISAS),
Via Bonomea 265, 34136 Trieste, Italy}
\email{mgallet@sissa.it}
\author{Niels Lubbes}
\address[NL]{Johann Radon Institute for Computational and Applied 
Mathematics (RICAM), Austrian Academy of Sciences}
\email{niels.lubbes@ricam.oeaw.ac.at}
\author{Josef Schicho}
\address[JS]{Research Institute for Symbolic Computation (RISC), Johannes 
Kepler University}
\email{jschicho@risc.jku.at}
\author{Jan Vr\v{s}ek}
\address[JV]{University of West Bohemia, Faculty of Applied Sciences}
\email{vrsekjan@kma.zcu.cz}
\begin{document}

\begin{abstract}
We present algorithms for reconstructing, up to unavoidable projective 
automorphisms, surfaces with ordinary singularities in three dimensional space 
starting from their silhouette, or ``apparent contour'' --- namely the 
branching locus of a projection on the plane --- and the projection of their 
singular locus.
\end{abstract}

\maketitle

\section{Introduction}

In this paper, we provide an algorithm that deals with the following problem: 
given a homogeneous ternary polynomial~$D$ which is the discriminant of a 
polynomial~$F$, reconstruct~$F$ up to unavoidable automorphisms of the 
polynomial ring that preserve the discriminant. We do not tackle this problem 
in its full generality, and to understand better the conditions that we impose 
on the polynomial~$F$, it is useful to rephrase the question in a geometric 
setting. If we let $S$ be the surface in~$\p^3$ defined by~$F$, and we 
consider a linear projection $\p^3 \dashrightarrow \p^2$, we call \emph{contour} 
the locus of points in~$S$ whose tangent space passes though the center of 
projection. The projection of the contour is the \emph{silhouette} of~$S$, and 
it is the zero set of the discriminant of~$F$ in the direction given by the 
linear projection. The previous problem can then be specified as follows: given 
the silhouette of~$S$ under a projection $\p^3 \dashrightarrow \p^2$, we want to 
reconstruct the surface~$S$ and the projection to~$\p^2$. 
Since we can always precompose a projection by an automorphism of~$\p^3$, we 
can only hope to solve the problem modulo these automorphisms.
We restrict to surfaces that have at most \emph{ordinary singularities}, namely 
those singularities that arise on a general projection to~$\p^3$ of a smooth 
surface living in a higher dimensional projective space. Moreover, we suppose 
that all projections $\p^3 \dashrightarrow \p^2$  we consider are have ``good'' 
properties, namely those that would arise by projecting from a general point. 
This implies that the silhouettes we consider have only ``simple'' 
singularities (see \Cref{figure:ring_torus}).

In its geometric version, the problem we investigate comes within the 
field of \emph{algebraic vision}, namely the study, via algebra and geometry, 
of problems from computer vision. This subject has been investigated 
intensively in the last years; see, for example, the 
books~\cite{Faugeras2001, Hartley2004} and, among others, the 
papers~\cite{Kileel2017, Kohn2017, Joswig2016, Ponce2014, Ponce2017, 
Trager2016}. In particular, the problem of 
reconstructing 3D shapes from 2D information has been investigated thoroughly 
(see for example~\cite{Boyer1997}, \cite{Kang2001} and \cite{Kahl1998}). 
Attempting to reconstruct a 3D surface from just one 2D picture (namely, from 
its silhouette) seems hopeless because small bumps or perturbations in the 
direction of the camera do not leave any trace on the contour. There are 
situations where this approach was tried, but only when strong a priori 
knowledge about the object to reconstruct is available (see~\cite{Ponce1989} 
and~\cite{Zerroug1993}). However, in the algebraic setting, this turns out to be 
doable, due to the rigidity of algebraic varieties. The question of 
reconstruction of a surface from its silhouette was investigated by the 
Italian school of algebraic geometry at the beginning of the twentieth century, 
and culminated with the formulation of Chisini's conjecture and its solution by 
Kulikov in 1999.
\begin{figure}
  \includegraphics[width=\textwidth]{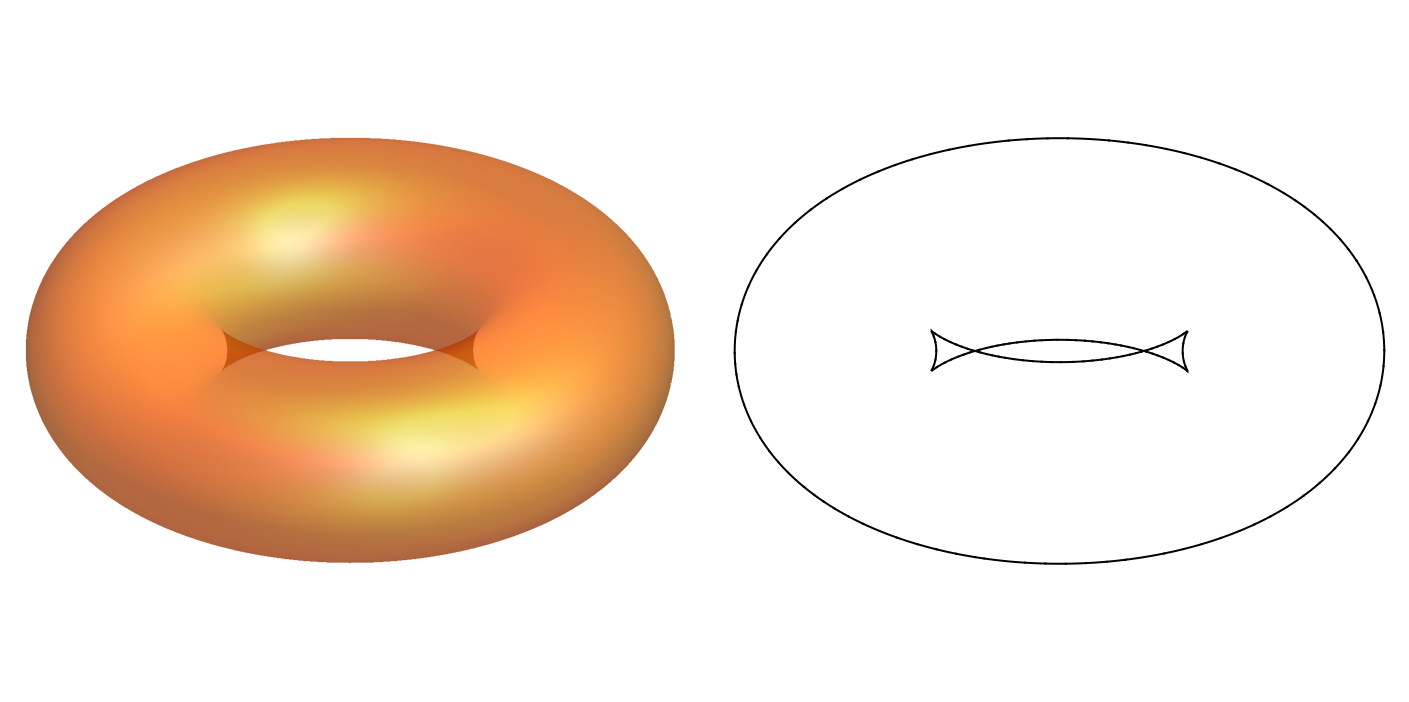}
 \caption{On the left, a ring torus, an algebraic surface of 
degree~$4$. On the right, we highlight its silhouette.}
 \label{figure:ring_torus}
\end{figure}

\subsection*{Chisini's conjecture}

Following the works of Enriques (see~\cite{Enriques1924} and related papers by 
Zariski~\cite{Zariski1929} and Segre~\cite{Segre1930}), Chisini asked 
in~\cite{Chisini1944} whether a surface can be reconstructed from its 
silhouette when it is projected to~$\p^2$. In more modern terms, (see 
\cite[Introduction, Definition~1]{Catanese1986}), one defines a \emph{multiple 
plane} to be a pair~$(S,f)$ where $S$ is a compact connected complex surface and 
$f$ is a finite holomorphic map $f \colon S \longrightarrow \p^2$. The 
pair~$(S,f)$ is said to be \emph{general} if the ramification divisor~$R$ of~$f$ 
is smooth and reduced, $f(R) = B$ has only nodes and ordinary cusps as 
singularities, and $f_{|_R} \colon R \longrightarrow B$ has degree~$1$. Chisini 
conjectured that if two general multiple planes~$(S,f)$ and~$(S', f')$, whose 
maps have degree~$\geq 5$, have the same branching locus~$B \subset \p^2$, 
then there exists an isomorphism $\phi \colon S \longrightarrow S'$ such that 
$f' \circ \phi = f$. Several authors investigated this problem (see 
\cite{Catanese1986, Nemirovskiui2001, Manfredini2002, Moishezon1981} and 
\cite[Section~7.4]{Catanese2008}), until Kulikov solved it in affirmative way 
in~\cite{Kulikov1999} and~\cite{Kulikov2008}. Interestingly, the case when $S$ 
is a smooth surface in~$\p^3$ and $f$ is a general linear projection to~$\p^2$ 
is also solved by Forsyth in~\cite{Forsyth1993}.

\subsection*{Cubic surfaces}

Cubic surfaces in~$\p^3$ are a first non-trivial, though still simple enough, 
case of surface reconstruction from the silhouette. This case was studied by 
Zariski~\cite{Zariski1929} and Segre~\cite{Segre1930}, by Chisini and 
Manara~\cite{Chisini1946}, and by Biggiogero~\cite{Biggiogero1947} (she later 
considered also the case of quartic surfaces in~\cite{Biggiogero1947a}); more 
recently, works focusing on the real situation appeared, see for 
example~\cite{Mikhalkin1995} and \cite{Finashin2015}. A cubic form~$F$ can 
always be brought to Tschirnhaus form $F = w^3 + A(x,y,z) w + B(x,y,z)$ via 
automorphisms of~$\p^3$; its discriminant is $\Delta = -(4 A^3 + 27B^2)$. The 
task of reconstructing the surface $\{ F = 0\}$ from its silhouette is 
equivalent to reconstructing~$A$ and~$B$ from~$\Delta$. Generically, the 
curve~$\Delta$ has six cusps; there is a unique conic passing through those six 
points, which one proves must be~$A$ (possibly up to some scalar multiple). Once 
$A$ is known, the cubic~$B$ can be computed as follows: one selects a cubic~$C$ 
in the ideal of the six points which is linearly independent from the three 
linear multiples of~$A$; then, one makes an ansatz for~$B$ of the form $\lambda 
C + L \cdot A$ where $\lambda \in \C$ and $L$ a linear polynomial, and imposes 
that $-\bigl(4 A^3 + 27 (\lambda C + L \cdot A) \bigr)$ equals the given 
discriminant.

Unfortunately, already for quartic surfaces the formula for the discriminant 
is more complicated, and does not allow a straightforward generalization of the 
procedure for cubics. Nevertheless, the algorithm described in \Cref{reconstruction_smooth} (and already known in the literature) 
provides a generalization of the one for cubics when we restrict to smooth 
surfaces. Going further, the algorithm we present in \Cref{reconstruction_general} applies to even more general situations.

\subsection*{Our contribution}

In this paper, we provide a reconstruction algorithm for surfaces in~$\p^3$ that 
have at most ordinary singularities, namely those singularities that inevitably 
arise when we project a smooth surface in~$\p^5$ to~$\p^3$. 
\Cref{preliminaries} discusses general projections of surfaces with 
ordinary singularities, and in particular describes the possible singularities 
of the silhouette of such projections recalling some well-known classical 
results.

As a warm-up, in \Cref{reconstruction_smooth} we recall the procedure for 
recovering a smooth surface from its silhouette (see~\cite{dAlmeida1992}). We 
proceed in two steps: first, we reconstruct the contour from the silhouette, and 
then we determine the surface. The construction of the contour is based on the 
fact that there is exactly one form~$G_1$ of degree $d^2 - 3d + 2$ vanishing at 
the singularities of the silhouette (in analogy with the existence of the 
conic~$A$ in the situation of cubic surfaces); moreover, there is exactly one 
form~$G_2$ of degree $d^2 - 3d + 3$ vanishing at the singularities of the 
silhouette that is independent from~$G_1$. The contour is the image of the 
silhouette under the rational map
\[
 (x:y:z) \; \mapsto \; (x : y: z : G_2/G_1) \, . 
\]
Once the contour is known, the equation~$F$ of the surface is 
determined so that $F$ and $\partial_w F$ generate the ideal of the contour 
(supposing that the projection is the one along the $w$-axis).

The algorithm for good projections of surfaces with ordinary singularities 
generalizes the one for smooth surfaces. Also here, we use the 
singularities of the silhouette in order to define a rational map that
determines the contour as the image of the silhouette; after that, the 
reconstruction of the surface proceeds exactly as in the smooth situation. One 
important difference with the smooth case is that when dealing with surfaces 
with ordinary singularities we have to take into account the non-reduced
structure of both the contour and the silhouette. Sheaf theory provides a firm 
theoretical ground to prove the correctness of our algorithm, which relies on a 
well-known formula relating the dualizing sheaves of the contour and of the 
silhouette. 

While in the smooth case it is well-known that reconstruction is essentially 
unique, our algorithm could (and in some example really does) give finitely many 
essentially different results. The reason is that the two components of the 
silhouette, which are the projections of the curve of smooth points of the 
surface that are in the contour, and of the singular curve of the surface, may 
intersect transversally; these intersections may either be projections of pinch 
points, or just the result of two distinct points on the surface being collapsed 
by the projection, and it is not possible to distinguish these two situations by 
using only local analytic equations. This ambiguity is related to the failure of 
Chisini's conjecture in low degree (see \cite{Catanese1986}). 
\Cref{figure:ambiguity} shows three essentially different Roman surfaces 
with the same silhouette; in this case, there are projective isomorphisms 
between the surfaces, but none of them preserves the center of projection. This 
case is discussed in more detail in \Cref{example:veronese}.

\begin{figure}
 \begin{center}
  \includegraphics[width=.25\textwidth]{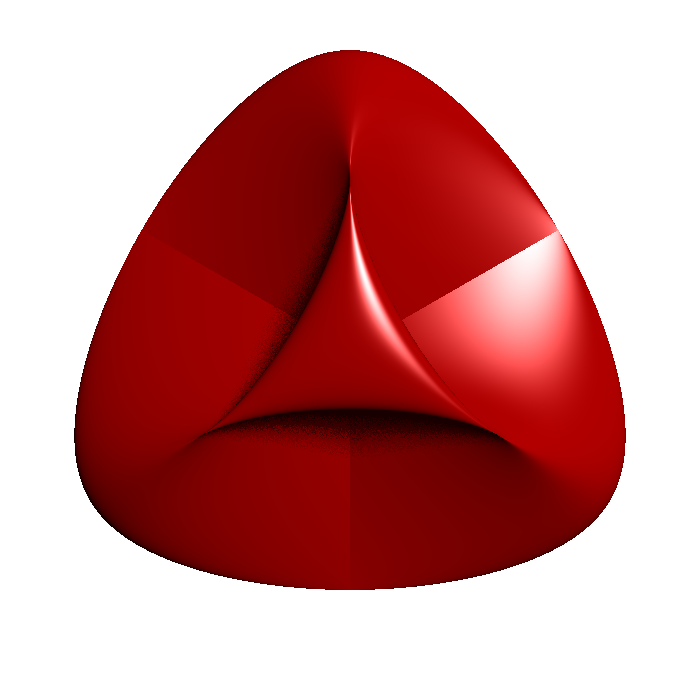} 
  \includegraphics[width=.25\textwidth]{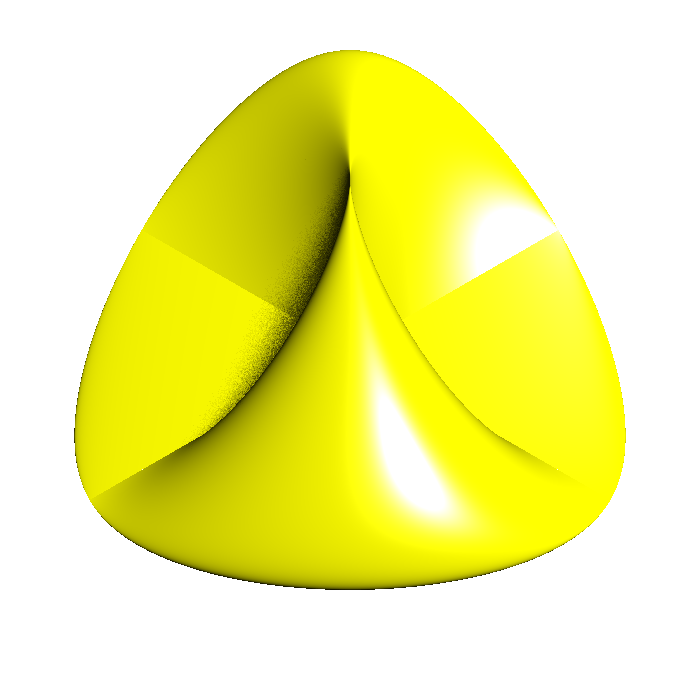} 
  \includegraphics[width=.25\textwidth]{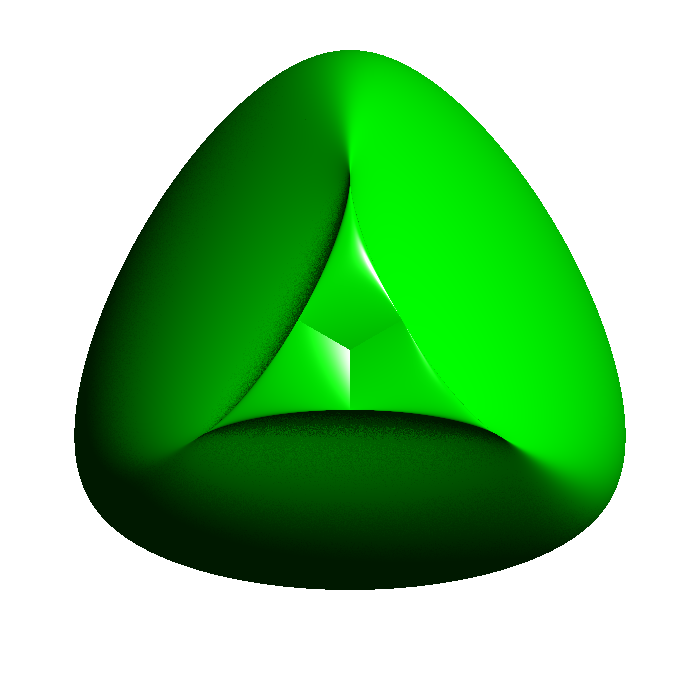} 
 \end{center}
 \caption{Three Roman surfaces with the same silhouette.}
 \label{figure:ambiguity}
\end{figure}

\subsection*{Concerning the algorithm}

An implementation in Maple of our algorithm is available at 
\begin{center}
\url{https://www.risc.jku.at/people/jschicho/pub/Chisini.mpl}. 
\end{center}
The algorithm can easily be re-implemented in any computer algebra system that 
provides Gr\"obner bases. We tested the program for randomly
generated surfaces with different type of singularities of degree up to~$6$; 
the performances are reported at the end of \Cref{reconstruction_general}.

We tried to state the algorithm with as few references to the theory we used to 
prove its correctness as possible, in order to make it available to a wide 
range of readers. The proof of its correctness, instead, requires a basic 
knowledge of sheaf and scheme theory.

The package contains also symbolic proofs that are needed in \Cref{conductor}. 

\section{Singularities of surfaces and their contours and silhouettes}
\label{preliminaries}

In this section we describe the kind of surfaces and projections we are going 
to deal with for the rest of the paper. We fix the following 
terminology. The \emph{contour} of a surface~$S \subset \p^3$ is the 
common zero set of the equation of the surface and its derivative in the 
direction of the projection. The contour is then the union of the singular 
locus~$Z$ of~$S$ and the \emph{proper contour}~$R \subset S$, namely the curve 
of smooth points of the surface whose tangent planes pass through the center of 
projection (see \cite[Remark~3.3]{Ciliberto2011}). 

The \emph{silhouette} is the projection of the contour, and hence it is the 
union of the \emph{singular image}~$W \subset \p^2$, the projection of the 
singular locus, and of the \emph{proper silhouette}~$B \subset \p^2$, the 
projection of the proper contour. If the surface is smooth, the proper contour 
and the proper silhouette are, respectively, what in algebraic geometry are
called the \emph{ramification locus} and the \emph{branching locus} of the 
projection $S \longrightarrow \p^2$ (see \cite[Section~3.1]{Ciliberto2011}).

In our work, we consider surfaces~$S \subset \p^3$ with ordinary singularities 
(see \cite[Definition~7]{Mezzetti1997} and 
\cite[Section~2.1]{Ciliberto2011}). Surfaces with \emph{ordinary singularities} 
are surfaces whose only singularities are self-intersection curves (double 
curves), self-intersection triple points and pinch points (see 
\Cref{figure:general_singularities}). Notice that, in particular, these 
surfaces cannot be tangent developables (this will be useful in the proof of 
\Cref{proposition:projection_good}). Moreover, our object of 
investigation will be  good projections $S \longrightarrow \p^2$. A \emph{good 
projection} is a linear map $S \longrightarrow \p^2$ where $S$ has ordinary 
singularities and such that:
\begin{enumerate}
 \item the restriction of the projection to the contour is injective, except 
for at most finitely many points; \label{enum:injective}
 \item the proper contour~$R$ is smooth, and the proper silhouette~$B$ has at 
most nodes and ordinary cusps; \label{enum:ciliberto}
 \item the line through the center of projection and a point in the
proper contour~$R$ intersects~$S$ with multiplicity exactly~$2$ at that point, 
except for preimages of cusps and singular points on the surface; 
\label{enum:contour}
 \item the singular image~$W$ has only nodes and ordinary triple points ($D_4$ 
singularities), the latter arising as images of spatial triple points; 
\label{enum:triple}
 \item the singular image~$W$ and the proper silhouette~$B$ meet either 
transversally, or tangentially with order~$2$ at smooth points; in particular, 
we ask pinch points to be mapped to transversal intersections.
\label{enum:intersection}
\end{enumerate}

\begin{figure}
  \centering
  \begin{tabular}{ccc}
   \includegraphics[width=.27\textwidth]{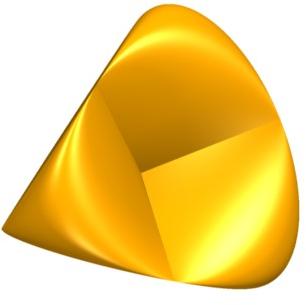} & &
   \includegraphics[width=.3\textwidth]{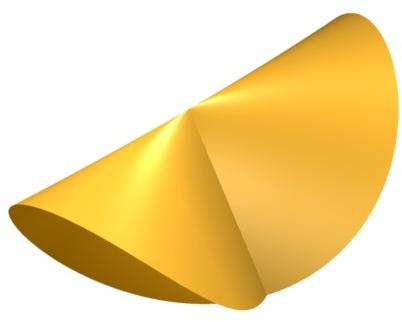}
  \end{tabular}
 \caption{Examples of general singularities of a surface: on the left a 
self-intersection triple point and three pinch points (in a Roman surface) and 
on the right a pinch point (in a Whitney umbrella).}
 \label{figure:general_singularities}
\end{figure}

We remark that the assumptions on the singularities of the surfaces are 
satisfied if the surfaces are general projections of smooth surfaces (see 
\cite[Theorem~8]{Mezzetti1997}). 

We show now that the properties of good projections $S\longrightarrow \p^2$ are 
satisfied if we project a surface with ordinary singularities from a general point 
in~$\p^3$. This is a mild generalization 
of~\cite[Theorem~1.2]{Ciliberto2011}, and in several parts of the proof we use 
the same techniques used by Ciliberto and Flamini. In the proof, we use some 
auxiliary results (\Cref{lemma:secant,lemma:bitangent,lemma:asymptotic,lemma:transverse}) 
which are proved at the end of the section to increase readability since the proof of 
\Cref{proposition:projection_good} is rather long. 

\begin{proposition}
\label{proposition:projection_good}
 If $S \subset \p^3$ is a surface with ordinary singularities, then the 
projection $S \longrightarrow \p^2$ from a general point~$p \in \p^3$ is good.
\end{proposition}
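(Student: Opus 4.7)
The plan is to treat each of the five conditions in the definition of a good projection as carving out a Zariski-open subset of~$\p^3$, and then to take the intersection. More precisely, for each condition~$(i)$, let $B_i \subset \p^3$ denote the locus of centers~$p$ from which the projection $S \dashrightarrow \p^2$ fails~$(i)$; we will show that each $B_i$ is contained in a proper closed algebraic subset of~$\p^3$, so any $p$ in the complement of $B_1 \cup \cdots \cup B_5$ induces a good projection. Throughout, let $Z \subset S$ be the singular locus, let $T \subset Z$ be the finite set of triple points and pinch points, and for $p \in \p^3 \setminus S$ let $R_p$ denote the proper contour and $C_p = R_p \cup Z$ the full contour.

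The smoothness of~$R_p$ and the assertion that $B_p := \pi_p(R_p)$ has only nodes and ordinary cusps (condition~\ref{enum:ciliberto}) is essentially the content of~\cite[Theorem~1.2]{Ciliberto2011} applied to the smooth locus of~$S$: Bertini applied to the polar system with base point~$p$ gives smoothness of~$R_p$ for generic~$p$, and a standard dimension count shows that centers lying on a bitangent-trisecant or on a hyperflex tangent of $S^{\mathrm{sm}}$ form a proper subvariety; this is quantified by Lemma~\ref{lemma:bitangent}. Condition~\ref{enum:contour} is failed precisely by centers lying on an asymptotic tangent line at a smooth point of~$S$, and the union of such lines --- the asymptotic developable --- is a proper subvariety of~$\p^3$ by Lemma~\ref{lemma:asymptotic}. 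Condition~\ref{enum:injective} follows because the bad centers are those lying on infinitely many chords of~$C_p$; Lemma~\ref{lemma:secant} bounds the dimension of the relevant secant locus of the union $R_p \cup Z$ below~$3$, so a general~$p$ yields finitely many chords through itself, hence an almost-injective projection of the contour.

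The novel conditions relative to the smooth case are~\ref{enum:triple} and~\ref{enum:intersection}, which concern the image of the singular locus~$Z$. Since $Z$ is a curve of triple points in~$S$ with isolated pinch and triple points, the image $W = \pi_p(Z)$ is a plane curve with a node at every apparent crossing of $Z$ seen from~$p$; a general~$p$ avoids the finitely many honest-triple-tangent lines to~$Z$ and the finite set of stationary bisecants, so $W$ has only nodes away from $\pi_p(T)$. At an honest triple point~$q \in Z$ three sheets of~$S$ meet with independent tangent planes, and provided that $p$ does not lie in any of the three tangent planes nor on one of the three coordinate lines of the triple point (a locus of dimension $\leq 2$), the three local branches of~$W$ at $\pi_p(q)$ have three distinct tangents, producing the desired~$D_4$ singularity. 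For condition~\ref{enum:intersection}, the proper silhouette~$B_p$ meets~$W$ at the images of the points where $R_p$ meets~$Z$; away from pinch points this happens at smooth points of $Z$ where a sheet becomes tangent to a line through~$p$, yielding either a transversal crossing or a simple tangency of order~$2$; the centers for which a higher-order tangency occurs form a proper subvariety by Lemma~\ref{lemma:transverse}. The key input is that a pinch point~$q \in T$ is mapped to a transversal intersection of $W$ and $B_p$: locally, a Whitney umbrella has a unique distinguished tangent direction (the axis of the umbrella), and Lemma~\ref{lemma:transverse} shows that for $p$ avoiding a two-dimensional bad locus the two branches of the silhouette at $\pi_p(q)$ meet transversally.

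The main obstacle is the pinch-point analysis in condition~\ref{enum:intersection}: near a pinch point, $W$ and~$B_p$ share the image of the pinch point and local analytic computation is delicate because the proper contour~$R_p$ acquires an embedded structure at~$q$ and its image meets $W$ to the wrong order unless~$p$ is generic with respect to the umbrella's special tangent plane. This is why Lemma~\ref{lemma:transverse} is stated and proved separately, and it is the step that forces the auxiliary technical lemmas to be invoked in the form they are. Once that lemma is granted, the five bad loci~$B_1, \dots, B_5$ are each contained in proper closed subsets of~$\p^3$, their union is therefore also a proper closed subset, and every $p$ in the nonempty complement defines a good projection.
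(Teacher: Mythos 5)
Your general framework — each condition of a good projection fails only on a closed proper subset of the parameter space of centers, so the generic center satisfies all of them — is correct and is indeed the implicit logic of the paper. But the concrete content attached to that framework is misassigned in several places, and a key piece of the argument is missing entirely.

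First, the roles of the auxiliary lemmas are scrambled. In the paper, conditions~\eqref{enum:injective}, \eqref{enum:ciliberto}, and~\eqref{enum:contour} are handled wholesale by citing~\cite[Theorem~1.2]{Ciliberto2011}; Lemmas~\ref{lemma:secant}, \ref{lemma:bitangent}, and~\ref{lemma:asymptotic} are not invoked there at all. Rather, they are used inside the analysis of condition~\eqref{enum:intersection}, to rule out respectively: a node of~$W$ lying on~$B$ (two-secant of $Z$ tangent to $S$), a node of~$B$ lying on~$W$ (bitangent of $S$ meeting~$Z$), and a cusp of~$B$ lying on~$W$ (asymptotic tangent meeting~$Z$). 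You have instead wired Lemma~\ref{lemma:bitangent} to condition~\eqref{enum:ciliberto}, Lemma~\ref{lemma:asymptotic} to condition~\eqref{enum:contour}, and Lemma~\ref{lemma:secant} to condition~\eqref{enum:injective}. These are plausible-sounding cover stories but not what the lemmas are for, and the filling-family argument behind them (two or three foci on a general member of a filling family forcing a contradiction) does not straightforwardly yield those three conditions.

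Second, and more seriously, you have misidentified both the role of Lemma~\ref{lemma:transverse} and the actual technical heart of the proof. Lemma~\ref{lemma:transverse} is used only to dispose of the case where $B$ and $W$ meet at a point whose two preimages on the contour are distinct (one on $R$, one on $Z$). It plays no role whatsoever in the pinch-point case. The pinch-point case is handled by a short, direct local computation: after a Tschirnhaus transformation one writes the surface locally as $z^2 + u^2 v = 0$ with the projection along $z$, and one reads off that the contour projects isomorphically onto $u^2 v = 0$, giving a transversal intersection. The genuinely delicate part of the paper's proof — which your sketch does not mention at all — is showing that the proper contour $R$ and the singular curve $Z$ intersect transversally for a general center. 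This requires the analysis of tangent directions of the contour via the gradient/Hessian cross-product, the classification by $\mathrm{rk}\,H(f)$, and the argument that curves of parabolic points whose tangent direction is a principal direction lie in a fixed tangent plane, so that a general center misses the finitely many resulting bad planes. Without this, one cannot even get to the clean local normal forms ($x = z^2$, etc.) used to distinguish simple tangencies from transversal crossings, and the subsequent case distinction collapses. As it stands, your proposal has a genuine gap at precisely the step the paper labors hardest on.
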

\begin{proof}
 By \cite[Theorem~1.2]{Ciliberto2011}, properties \eqref{enum:injective}, 
\eqref{enum:ciliberto}, and~\eqref{enum:contour} hold for projections from a 
general center.
By assumption, the singular curve~$Z \subset S$ has no singularities other 
than triple points; a general projection may introduce at most nodes and 
project the spatial triple points to planar ordinary triple points. 
Hence condition~\eqref{enum:triple} is satisfied.

In order to ensure condition~\eqref{enum:intersection}, we start by showing 
that no singular point of the singular image~$W \subset \p^2$ lies on the 
proper silhouette~$B \subset \p^2$, and vice versa.

No triple point of the singular image~$W$ lies on~$B$. If the center of 
projection does not lie on any of the three tangent planes at any triple point 
of $Z$, then the contour does not pass through any triple point of~$Z$. If we 
now consider the projection of~$S$ from a triple point, this map has itself a 
silhouette curve, and the cone over this silhouette curve is constituted of 
lines that pass through the triple point and are tangent to~$S$. We get finitely 
many such cones considering all triple points, and if the projection center is 
chosen outside the union of all of them, then no triple point of~$W$ lies on the 
proper silhouette~$B$.

If a node of the singular image~$W$ lies in~$B$, then the projection center 
must lie on a two-secant line of~$Z$ which is tangent to the surface~$S$ at a 
smooth point. \Cref{lemma:secant} states that this does not happen for 
general projection centers, since these lines do not fill~$\p^3$. Recall that, 
having ordinary singularities, the surface~$S$ cannot be a tangent developable, 
and so we can use \Cref{lemma:secant} and the subsequent result. By the 
way, a two-secant line of~$Z$ that is tangent at a singular point of~$S$ would 
be a trisecant of~$Z$, hence lead to a triple point of~$W$ not arising from a 
triple point of~$Z$. This would contradict the first paragraph of the proof.

If a node of~$B$ lies on~$W$, then the projection center lies on a 
bitangent of~$S$ that intersects~$Z$. \Cref{lemma:bitangent} 
excludes that happens for general projection centers.

If a cusp of~$B$ lies on~$W$, then the projection center must lie on an 
asymptotic tangent line (see \cite[Section~3.2]{Ciliberto2011}) that
intersects~$Z$. \Cref{lemma:asymptotic} states that this does not happen 
for general projection centers.

We have established that $B$ and $W$ intersect only at points 
that are smooth in both curves. These intersections arise in two ways: 
projections of an intersection point of~$Z$ and~$R$, or projections of two 
distinct points, one smooth in~$Z$ and another smooth in~$R$. We show that $B$ 
and $W$ intersect transversally in both cases.

Suppose that the intersection is a projection of two distinct points. If the 
intersection were not transversal, then the center of projection would lie on a 
line~$L$ that  
intersects~$Z$ at a smooth point~$q$ and such that the tangent to~$Z$ at~$q$ is 
contained in a tangent plane of~$S$ at a smooth point contained in~$L$.
This is excluded by \Cref{lemma:transverse}.

Suppose that the intersection of~$B$ and~$W$ comes from an intersection of~$R$ 
and~$Z$. The strategy here to show that $B$ and $W$ intersect transversally is 
the following: we first show that the intersection between~$R$ and~$Z$ must be 
transversal; then $R \cap Z$ can be either a simple double point of~$R \cup Z$, 
or a pinch point. In the first case, we show that the point is mapped to a point 
of simple tangency; in the second case, we prove that the transversality of the 
intersection is preserved by the projection.

We begin with the first step, namely showing that the intersection between~$R$ 
and~$Z$ is transversal. We start by analyzing the tangent directions of 
the proper contour. Suppose that the surface~$S$ is defined by a polynomial $F 
\in \C[x,y,z,w]$. Then the contour is defined by~$F$ and by the polynomial $a 
F_x + b F_y + c F_z + d F_w$ where $(a:b:c:d)$ is the center of projection. If 
we de-homogenize setting $w=d=1$, then the equations for the contour are
\[
 F = 0, \quad
 (x-a)F_x + (y-b)F_y + (z-c) F_z = 0\,. 
\]
The tangent direction of the contour at $(x,y,z)$ is then given by the vector 
product of the gradients
of the two equations:
\begin{multline*}
 \begin{pmatrix}
  F_x \\ F_y \\ F_z
 \end{pmatrix}
 \times
 \begin{pmatrix}
  F_x  + (x-a) F_{xx} + (y-b) F_{xy} + (z-c) F_{xz} \\
  F_y  + (x-a) F_{yx} + (y-b) F_{yy} + (z-c) F_{yz} \\
  F_z  + (x-a) F_{zx} + (y-b) F_{zy} + (z-c) F_{zz}
 \end{pmatrix} = \\
 = \nabla(F) \times \Bigl( H(F) \cdot \bigl( (x,y,z) - (a,b,c) \bigr) \Bigr) \,.
\end{multline*}
Let $P$ be an intersection point of $Z$ and $R$. Notice that the surface~$S$ 
has two (analytic) branches around~$P$. From now on, we focus the branch of 
the surface~$S$ at~$P$ containing the proper contour~$R$, and we want to 
understand when the proper contour~$R$ of~$S$ is tangent to the singular 
locus~$Z$. Hence, from now on we let $F$ denote the analytic equation of the 
branch of~$S$ at~$P$ that contains~$R$. By a linear 
change of coordinates, we can assume that $P = (0,0,0)$ and $\nabla(F)|_{P} = 
(0,0,1)$, and that the tangent line~$T_P Z$ is spanned by~$(1,0,0)$. Locally 
at~$P$, the affine equation of the branch of~$S$ containing the proper contour 
is of the form $F(x,y,z) = z - f(x,y)$; moreover the center of the projection 
has coordinates $(a, b, 0)$, otherwise the proper contour would not pass 
through~$P$. Then
\[
 H(F) =
 \begin{pmatrix}
  f_{xx} & f_{xy} & 0 \\
  f_{xy} & f_{yy} & 0 \\
  0 & 0 & 0
 \end{pmatrix} \, .
\]
The direction of the contour at $P$ is then
\[
 \begin{pmatrix}
  0 \\ 0 \\ 1
 \end{pmatrix}
 \times
 \begin{pmatrix}
  f_{xx} \cdot a + f_{xy} \cdot b \\
  f_{xy} \cdot a + f_{yy} \cdot b \\
  0
 \end{pmatrix} \,.
\]
Hence the proper contour is tangent to the singular locus at~$P$ if and only if 
\[
 a \, f_{xx} + b f_{xy} = 0 \, ,
\]
since we supposed that the tangent direction of~$Z$ at~$P$ is $(1,0,0)$. We 
distinguish three situations:
\begin{description}
 \item[$\mathrm{rk}\bigl(H(f)\bigr) = 0$] in this case the tangency condition 
is satisfied for every~$(a,b)$.
 \item[$\mathrm{rk}\bigl(H(f)\bigr) = 1$] in this case we have a so-called 
\emph{parabolic point}; the Hessian of~$f$ is of the form 
$\left( \begin{smallmatrix} \alpha^2 & \alpha \beta \\ \alpha \beta & \beta^2 
\end{smallmatrix} \right)$, and so it has a one-dimensional kernel, also called
the asymptotic direction of the parabolic point.
If the tangent direction of~$Z$ lies in this kernel, then the tangency 
condition is satisfied for every~$(a,b)$.
 \item[$\mathrm{rk}\bigl(H(f)\bigr) = 2$] in this case the tangency condition 
is not satisfied for a general choice of $(a,b)$.
\end{description}
We consider points with zero Hessian as degenerate parabolic points with 
infinitely many asymptotic directions, to avoid a case distinction in the rest 
of the proof. We claim that a curve of parabolic points, whose tangent 
direction is always the/a asymptotic direction, has the property that the 
tangent plane is constant along the curve. Recall that, locally, the surface has 
equation $z - f(x,y) = 0$. We can locally define the curve by an additional 
second equation $y - h(x) = 0$. We want to show that the gradient vector
\[  
 \begin{pmatrix}
  f_{x} \bigl( x, h(x) \bigr) \\
  f_{y} \bigl( x, h(x) \bigr) \\
  -1
 \end{pmatrix} 
\]
is constant. The derivative of this expression is
\[ 
 \begin{pmatrix}
  f_{xx} \bigl( x,h(x) \bigr) + f_{xy} \bigl( x, h(x) \bigr) h'(x) \\
  f_{xy} \bigl( x, h(x) \bigr) + f_{yy} \bigl( x, h(x) \bigr) h'(x) \\
  0
 \end{pmatrix} = 
 \begin{pmatrix}
  H(f)\bigl( x, h(x) \bigr) 
   \begin{pmatrix} 1 \\ h'(x) \end{pmatrix} \\
  0
 \end{pmatrix} \, , 
\]
which is zero by assumption. The claim, namely the fact that the tangent plane 
is constant along the curve, is thus proven. Notice that, since as we 
already remarked a surface with ordinary singularities cannot be a 
tangent developable, not all points on the surface are parabolic. Hence, if the 
center of projection is outside these finitely many planes determined by curves 
of parabolic points or isolated parabolic points, the curves~$R$ and~$Z$ will 
intersect transversally.

We now show that if a point of intersection of~$R$ and~$Z$ is a pinch point, 
then its projection is a point of transverse intersection between~$B$ and~$W$; 
moreover, we show that if an intersection of~$R$ and~$Z$ is not a pinch point, 
then its projection is a point of simple tangential intersection of~$B$ and~$W$. 
Once we prove this, condition~\eqref{enum:intersection} is ensured and the whole 
proof is concluded.

Suppose that $P \in R \cap Z$ is not a pinch point. Locally around~$P$, we can 
take analytic coordinates such that the proper contour is defined by $x = z = 
0$, the branch of~$S$ containing it has equation $x - z^2 = 0$, and 
the projection is along the $z$-axis. Since $R$ and $Z$ intersect transversally, 
there exists a power series~$h$ of positive order such that the equation of the 
singular locus~$Z$ is of the form $y - h(z) = x - z^2 = 0$. The equation of the 
proper silhouette is $x = 0$. The equation of the singular image is given by 
eliminating~$z$ from the equations $y - h(z) = 0$ and $x - z^2= 0$. We 
write~$h(z)$ in the form $z h_1(z^2) + z^2 h_2(z^2)$, so the 
elimination ideal is generated by 
\[ 
 \bigl( y + x h_2(x) \bigr)^2 - x h_1(x)^2 \, .  
\]
This shows that $B$ and $W$ have the same linear factor, so they are tangent, 
but if we set $x = 0$ in the equation of~$W$ we obtain a non-zero quadratic 
summand, proving that the tangency is simple.

Consider now the case that $P \in R \cap Z$ is a pinch point. Recall that a 
pinch point is a singular point such that the analytic germ of the surface at 
the point has equation equivalent to $z^2 - y x^2 = 0$, see 
\cite[Definition 2.1]{Ciliberto2011}. We know 
that pinch points are double points. Hence, for a general projection for which 
we choose coordinates  $(x,y,z) \mapsto (x,y)$, we have that $S$ has local 
equation at~$P$ of the form $z^2 + h_1(x,y) z + h_2(x,y) = 0$. A Tschirnhaus 
transformation $z \mapsto z - h_1(x,y) / 2$, which leaves the direction of 
projection invariant, makes the local equation of~$S$ in the form $z^2 + h(x,y) 
= 0$. Now, pinch points can be characterized as points such that the 
discriminant of a general projection is the product of  a square of a linear 
factor and another linear factor intersecting transversally the first one, 
namely it is of the form $u^2 v$. In these coordinates, hence, the surface~$S$ 
has equation $z^2 + u^2 v = 0$ at~$P$, and the projection can still be assumed 
to be along the $z$-axis. The contour is then given by $z^2 + u^2 v = z = 0$, 
so we see that the projection maps it isomorphically to the plane 
curve~$u^2 v = 0$. This concludes the proof that pinch points project to 
transverse intersections of the proper silhouette and the singular image.  
\end{proof}

In order to prove the auxiliary results needed for 
\Cref{proposition:projection_good} we use the results from 
\emph{focal geometry} introduced and proved in \cite[Sections~4 
and~5]{Ciliberto2011}. Here we briefly sketch the setting and the results, and 
we refer to the work of Ciliberto and Flamini for more precise information. We 
consider families of lines in~$\p^3$, namely varieties $\mscr{X} \subset D 
\times \p^3$, where $D \subset \mathbb{G}(1,3)$ is a subvariety of the 
Grassmannian of lines in~$\p^3$, of the form
\[
 \mscr{X} = 
  \bigl\{ 
   (\delta, x) \, \colon \, 
   \delta \in D, \, 
   x \text{ belongs to the line corresponding to } \delta 
  \bigr\} \, .
\]
By restricting the second projection to~$\mscr{X}$, we get a map $f \colon 
\mscr{X} \longrightarrow \p^3$; its ramification points form the \emph{focal 
locus} of~$\mscr{X}$. We say that $\mscr{X}$ is a \emph{filling family} if $D$ 
is two-dimensional and $f$ is dominant. 

\begin{theorem}
\label{theorem:focal}
Let $f \colon \mscr{X} \longrightarrow \p^3$ be a filling family, let $S 
\subset \p^3$ be a non-developable surface, and let $Z \subset \p^3$ be a 
curve. For a general element $\delta \in D$, the fiber 
\[
 \mscr{X}_\delta = 
  \bigl\{ 
   (\delta, x) \in \mscr{X} \, \colon \, 
   x \text{ belongs to the line corresponding to } \delta 
  \bigr\}
\]
intersects the focal locus in two points (or one counted with 
multiplicity~$2$). 

Moreover, if for a general element $\delta \in D$ the line 
$\ell = f(\mscr{X}_\delta)$ intersects the surface~$S$ 
tangentially at a point~$p$, then the following properties hold:
\begin{itemize}
 \item[(a)] the point $(\delta, p) \in \mscr{X}_\delta$ is a focus, namely a 
point in the focal locus;
 \item[(b)] the multiplicity of intersection of~$\ell$ with~$S$ at~$p$ is 
at most~$3$;
 \item[(c)] if the multiplicity of intersection of~$\ell$ with~$S$ at~$p$ 
is~$3$, then $(\delta, p)$ is a focus of~$\mscr{X}_\delta$ of multiplicity~$2$.
\end{itemize}
Moreover, if for a general element $\delta \in D$ the line $\ell = 
f(\mscr{X}_\delta)$ intersects the curve~$Z$ in a point~$p$, then 
$(\delta,p)$ is a focus in~$\mscr{X}_\delta$.
\end{theorem}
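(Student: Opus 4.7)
The plan is to introduce local analytic coordinates on $D$ and on each fiber, write out the Jacobian of $f$ explicitly, and deduce every part of the statement from one computation combined with a short tangent-space argument. Near a general $\delta_0 \in D$, I would pick coordinates $(s,t)$ on $D$ and parametrize each line affinely as $\ell_{s,t}(\tau) = \mathbf{a}(s,t) + \tau\,\mathbf{v}(s,t)$ for analytic vector-valued functions $\mathbf{a}$ and $\mathbf{v}$. Then the focal locus is cut out by the Jacobian determinant
\[
J(s,t,\tau) \;=\; \det\bigl(\mathbf{a}_s + \tau\,\mathbf{v}_s,\; \mathbf{a}_t + \tau\,\mathbf{v}_t,\; \mathbf{v}\bigr),
\]
which, when expanded, is a polynomial in $\tau$ of degree at most $2$. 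Its restriction to a general fiber $\mscr{X}_\delta$ therefore cuts out a divisor of degree~$2$ (counted with multiplicity), proving the first assertion.

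Both (a) and the statement about a curve $Z$ follow from the same tangent-space bookkeeping. Assume that for each $\delta$ near $\delta_0$ the line $\ell_\delta$ is tangent to $\Sigma$ (respectively, meets $Z$) at a point $p(\delta) = \mathbf{a}(\delta) + \tau_0(\delta)\,\mathbf{v}(\delta)$. Differentiating in the base directions yields
\[
\mathbf{a}_i + \tau_0\,\mathbf{v}_i \;=\; \partial_i p(\delta) - (\partial_i\tau_0)\,\mathbf{v}, \qquad i \in \{s,t\}.
\]
In the $\Sigma$-case, tangency gives $\mathbf{v} \in T_p\Sigma$ and, since $p(\delta) \in \Sigma$, also $\partial_i p \in T_p\Sigma$; hence the three vectors $\mathbf{a}_s + \tau_0\,\mathbf{v}_s$, $\mathbf{a}_t + \tau_0\,\mathbf{v}_t$, $\mathbf{v}$ all lie in the two-dimensional plane $T_p\Sigma$, are linearly dependent, and so $J$ vanishes at $(\delta, \tau_0(\delta))$. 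In the curve case, the same three vectors lie in the two-dimensional plane $T_pZ + \langle\mathbf{v}\rangle$, which again contains $\mathbf{v}$, and the conclusion is identical.

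For the multiplicity statements (b) and (c) I would analyse the auxiliary function $g(\tau) := F\bigl(\mathbf{a}(\delta) + \tau\,\mathbf{v}(\delta)\bigr)$, where $F$ is a local defining equation of $\Sigma$ at $p$, whose vanishing order at $\tau_0$ equals the intersection multiplicity of $\ell$ with $\Sigma$ at $p$. Differentiating $g$ in the base directions and combining with the identities above, I expect the following translation: an intersection multiplicity of exactly $3$ forces both $J$ and $\partial_\tau J$ to vanish at $(\delta,\tau_0)$, so $\tau_0$ is a double root of the quadratic $J(\tau)$ and the focus has multiplicity~$2$, giving~(c). Statement~(b) I would prove by contradiction: a contact of order $\geq 4$ along a full two-parameter family of tangent lines imposes enough second-order identities on the second fundamental form of $\Sigma$ to force the Gauss map of $\Sigma$ to have generic rank $\leq 1$, contradicting non-developability. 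The main obstacle is precisely this last step: the linear-algebra trick behind~(a) and the curve case captures only first-order information, while (b) and (c) require keeping track of second-order jets of $(\mathbf{a},\mathbf{v})$ and a clean translation of non-developability into a non-degeneracy condition for a Hessian-type pairing along the family; once this encoding is set up cleanly, the expansions of $J$ and $g$ around $\tau_0$ should yield (b) and (c) together.
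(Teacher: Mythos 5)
The paper does not actually prove this theorem: it records the first three assertions as a citation to \cite[Sections 4 and 5]{Ciliberto2011} and asserts, without argument, that the final statement about the curve $Z$ ``can be proven in an analogous way.'' So there is no proof in the paper to compare against directly. Your proposal attempts to give a self-contained local-coordinate proof, which is genuine added content; it likely parallels in spirit the argument in \cite{Ciliberto2011}, but you should be judged on whether your own steps close.

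The portion of your argument covering the first claim, part (a), and the $Z$-statement is correct and complete. Parametrizing the lines affinely as $\mathbf{a}(s,t)+\tau\,\mathbf{v}(s,t)$ and writing the focal locus as the vanishing of $J=\det\bigl(\mathbf{a}_s+\tau\mathbf{v}_s,\ \mathbf{a}_t+\tau\mathbf{v}_t,\ \mathbf{v}\bigr)$ does give a quadratic in $\tau$, whence two foci per general fiber. The rank-drop argument is clean: since $\mathbf{a}_i+\tau_0\mathbf{v}_i=\partial_i p-(\partial_i\tau_0)\mathbf{v}$ and, in the tangency case, both $\partial_i p$ and $\mathbf{v}$ lie in the two-plane $T_p\Sigma$ (in the curve case, in $T_pZ+\langle\mathbf v\rangle$), the three columns of the Jacobian are confined to a plane and $J$ vanishes. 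This proves (a) and the $Z$-statement.

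Parts (b) and (c), however, remain unproved, and you say so yourself. For (c) you write that you ``expect'' contact of order $3$ to force $\partial_\tau J(\delta,\tau_0)=0$, but the second-order identities are never derived: one must differentiate $g(\tau)=F\bigl(\mathbf a+\tau\mathbf v\bigr)$ twice, propagate the tangency conditions $g(\tau_0)=g'(\tau_0)=g''(\tau_0)=0$ through the base-direction derivatives, and actually match the resulting relations against the coefficients of the quadratic $J$; none of this is written. For (b) you gesture at a contradiction via the Gauss map (``contact of order $\geq 4$ along a filling family forces rank $\leq 1$''), but no statement about the second fundamental form of $\Sigma$ is established, and this implication is far from obvious --- it is precisely the nontrivial content of the focal-geometry machinery. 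As written, (b) and (c) are a plan rather than a proof, so the proposal has a genuine gap there.
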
 

Notice that the last statement in \Cref{theorem:focal} is not 
present in~\cite{Ciliberto2011} but can be proven in an analogous way. 
Moreover, in~\cite{Ciliberto2011} it is used the notion of \emph{contact 
order} instead of \emph{multiplicity of intersection}: however, these two 
numbers just differ by~$1$, so we adopt the latter notion. 

With these results at hand, we can proceed with proving our auxiliary lemmas. 
Here, the considered families of lines~$\mscr{X}$ are families whose general 
element is tangent or bitangent to a given non-developable surface~$S$ and 
intersects a given curve~$Z$.

\begin{lemma} \label{lemma:secant}
Let $S \subset \p^3$ be a non-developable surface and let $Z \subset \p^3$ be a 
curve. Then the family of two-secant lines of~$Z$ that are tangent to the 
surface~$S$ at a smooth point does not fill~$\p^3$.
\end{lemma}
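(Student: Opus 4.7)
My plan is to argue by contradiction via focal geometry. Assume the family $\mathcal{F}$ of two-secants of $Z$ tangent to $S$ at smooth points fills $\p^3$. First I would dispense with degenerate cases: if $Z$ is a line, then the ``two-secants'' of $Z$ reduce to the single line $Z$ itself, which certainly does not fill $\p^3$; hence I may assume $Z$ is an irreducible curve that is not a line, in which case the family of all two-secants of $Z$ is two-dimensional. Since a one-parameter family of lines sweeps out at most a surface, $\mathcal{F}$ can fill $\p^3$ only if it coincides, up to closure, with the whole family of two-secants of $Z$. In particular, the associated incidence variety $\mathscr{X}$ is a filling family in the sense of Theorem~\ref{theorem:focal}.

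Next I would count foci on a general line $\ell$ in the family. A general $\ell$ meets $Z$ in two distinct points $q_1, q_2$, each of which is a focus by the last statement of Theorem~\ref{theorem:focal}. Since $S$ is non-developable and $\ell$ is tangent to $S$ at a smooth point $p$, part~(a) of the theorem provides an additional focus, $p$. A general line in a filling family has only two foci counted with multiplicity, so $p$ must coincide with one of $q_1, q_2$; in other words, the tangency point already lies on $Z$.

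The final step is to exploit this conclusion. Passing to the double cover given by ordered pairs $(q_1, q_2) \in Z \times Z$ and using the obvious involution, a symmetry argument shows that there is a two-dimensional family of ordered pairs for which $\ell = \overline{q_1 q_2}$ is tangent to $S$ at $q_1$. Because $S$ is smooth at $q_1$, tangency forces $\ell \subset T_{q_1} S$, and hence $q_2 \in T_{q_1} S$. Fixing a generic $q_1$, the set of admissible $q_2$ must be one-dimensional, hence equal to all of $Z$, giving $Z \subset T_{q_1} S$. Since $Z$ is not a line, two distinct planes cannot both contain $Z$, so the tangent plane $T_{q_1} S$ must be a fixed plane $\Pi \supset Z$. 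But then every two-secant of $Z$ lies in $\Pi$, contradicting the hypothesis that $\mathcal{F}$ fills $\p^3$.

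The main obstacle is precisely the subcase where the tangency point already lies on $Z$: here Theorem~\ref{theorem:focal} does not immediately deliver a contradiction, and one must instead argue geometrically that the tangent planes of $S$ along $Z$ are forced to be constant, collapsing every two-secant into a single plane. This is where the dimension count together with the reduction that $Z$ is not a line is essential.
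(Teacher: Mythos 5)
Your proposal uses the same key tool as the paper, namely Theorem~\ref{theorem:focal}, and begins with the same counting-of-foci argument: if the family were filling, a general member would carry at most two foci, while the two secant points on~$Z$ and the tangency point on~$S$ would seem to provide three. The paper's proof is precisely this one sentence and stops there. You go further and notice that this count only yields an immediate contradiction when the tangency point is distinct from the two secant points; you then close the residual case (tangency at a point of~$Z$) by a separate geometric argument showing that $T_{q_1}S$ would have to be constant along~$Z$, forcing $Z$, and hence all of its two-secants, into a single plane. This is a real subtlety that the paper's terse proof glosses over. In the paper's application $Z$ is the singular locus of~$S$, so a smooth tangency point of~$S$ is automatically off~$Z$ and the three foci are genuinely distinct; but the lemma is stated for an arbitrary curve~$Z$, and your extra case analysis is what makes the argument airtight at that level of generality. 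Two small points worth tightening: the reduction to irreducible~$Z$ is not quite justified as written, since for reducible~$Z$ the lines joining distinct components form another two-dimensional family of two-secants that deserves the same treatment; and in the residual case one should remark that $q_1$ is a smooth point of~$S$ lying on~$Z$ (which is implicit in the hypothesis of tangency at a smooth point) before invoking $T_{q_1}S$.
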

\begin{proof}
 If such a family were filling, then any of its general members would carry 
three foci, which is impossible by \Cref{theorem:focal}.
\end{proof}

\begin{lemma} \label{lemma:bitangent}
Let $S \subset \p^3$ be a non-developable surface and let $Z \subset \p^3$ be a 
curve. Then the family of bitangents of~$S$ that intersect~$Z$ does not 
fill~$\p^3$.
\end{lemma}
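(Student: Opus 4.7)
\begin{proof}
The strategy is exactly parallel to the proof of Lemma~\ref{lemma:secant}: we suppose by contradiction that the family of bitangents of~$S$ meeting~$Z$ is filling, and then use Theorem~\ref{theorem:focal} to exhibit three foci on a general member, which is impossible.

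More precisely, let $\mscr{X} \subset D \times \p^3$ be the incidence variety associated with the family of bitangents of~$S$ that intersect~$Z$, where $D \subset \mathbb{G}(1,3)$ parametrizes such lines. If this family were filling, then $D$ would be two-dimensional and the second projection $f \colon \mscr{X} \longrightarrow \p^3$ would be dominant, so Theorem~\ref{theorem:focal} applies. For a general $\delta \in D$, the corresponding line~$\ell = f(\mscr{X}_\delta)$ would then be tangent to the non-developable surface~$S$ at two distinct smooth points $p_1, p_2$, and would meet~$Z$ at a further point~$q$ (the genericity of~$\delta$ lets us assume that $q$ is distinct from $p_1, p_2$, since the locus where $q$ coincides with a tangency point is cut out by a closed condition and so has smaller dimension). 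By part~(a) of Theorem~\ref{theorem:focal} applied to~$S$, both $(\delta,p_1)$ and $(\delta,p_2)$ lie in the focal locus of~$\mscr{X}$; by the last statement of Theorem~\ref{theorem:focal} applied to~$Z$, so does $(\delta,q)$.

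This produces three distinct foci on the general fiber~$\mscr{X}_\delta$, contradicting the first assertion of Theorem~\ref{theorem:focal} that every such fiber carries only two foci counted with multiplicity. Hence the family of bitangents of~$S$ intersecting~$Z$ cannot fill~$\p^3$.
\end{proof}

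The only delicate point I expect is the genericity argument ruling out the coincidence of~$q$ with one of the tangency points~$p_i$: if this coincidence happened along a codimension-zero locus in~$D$, one would instead have to argue with multiplicities and compare against the bound of two foci counted with multiplicity, but since a tangential contact already contributes a focus of multiplicity at least one and meeting~$Z$ at the same point forces additional vanishing, the count still exceeds two. Everything else is a direct transcription of the argument for Lemma~\ref{lemma:secant}.
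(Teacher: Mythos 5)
Your proof is correct and takes exactly the same route as the paper: both argue that a filling family of bitangents meeting~$Z$ would force three foci on a general fiber (two from tangency, one from the intersection with~$Z$), contradicting Theorem~\ref{theorem:focal}. You merely spell out in more detail what the paper states in one sentence, including the minor genericity point about $q$ not coinciding with a tangency point, which is handled correctly.
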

\begin{proof}
 If such a family were filling, then any of its general members would carry 
three foci, which is impossible by \Cref{theorem:focal}.
\end{proof}

\begin{lemma} \label{lemma:asymptotic}
Let $S \subset \p^3$ be a non-developable surface and let $Z \subset \p^3$ be a 
curve. Then the family of asymptotic tangent lines of~$S$ that intersect~$Z$ 
does not fill~$\p^3$.
\end{lemma}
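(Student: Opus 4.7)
The plan is to mimic exactly the strategy used for Lemmas~\ref{lemma:secant} and~\ref{lemma:bitangent}, but now exploiting the extra multiplicity that an asymptotic tangent line carries at its point of tangency with~$S$. Suppose, for contradiction, that the family~$\mscr{X}$ of asymptotic tangent lines of~$S$ meeting~$Z$ is a filling family. Then, by the first assertion of Theorem~\ref{theorem:focal}, a general line~$\ell$ in the family carries exactly two foci, counted with multiplicity, on the corresponding fiber.

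Now I would produce three foci (with multiplicity) on a general~$\ell$, contradicting the previous count. By definition, an asymptotic tangent line~$\ell$ at a smooth point~$p \in S$ has intersection multiplicity at least~$3$ with~$S$ at~$p$. Since $S$ is non-developable, part~(c) of Theorem~\ref{theorem:focal} applies to~$\ell$ and the surface~$\Sigma = S$: the point~$p$ is a focus of~$\mscr{X}_\delta$ of multiplicity~$2$. On top of this, $\ell$ meets the curve~$Z$ at some point~$q$, and the last sentence of Theorem~\ref{theorem:focal} tells us that $q$ is also a focus. Hence $\ell$ carries at least three foci counted with multiplicity, contradicting the two focused predicted by Theorem~\ref{theorem:focal} for a filling family.

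The only step that requires a bit of care is ensuring that the hypotheses of Theorem~\ref{theorem:focal}(c) are genuinely available for a general member of the family: namely, that the tangency point~$p$ is a smooth point of~$S$ and that the relevant multiplicity of intersection at~$p$ is exactly the one controlled by the focal estimate. Both facts follow from genericity, since the asymptotic tangent lines are defined at smooth points where the second fundamental form is degenerate but~$S$ is non-developable, so the generic asymptotic tangent is a honest order-$3$ contact line rather than a line lying in~$S$. Once this is in place, the contradiction is immediate and the lemma is proved.
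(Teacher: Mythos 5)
Your proof is correct and follows essentially the same argument as the paper: assuming the family fills $\p^3$, the general member carries a multiplicity-$2$ focus at the asymptotic tangency point (by Theorem~\ref{theorem:focal}(b)--(c)) plus an additional focus at the intersection with~$Z$, giving three foci counted with multiplicity and contradicting the count of two from the first part of Theorem~\ref{theorem:focal}. The paper states this in a single line; your version simply spells out which parts of the focal-geometry theorem are invoked.
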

\begin{proof}
 If such a family were filling, then any of its general members would carry 
two foci, one of which with multiplicity~$2$, which is impossible by 
\Cref{theorem:focal}.
\end{proof}

\begin{lemma} \label{lemma:transverse}
Let $S \subset \p^3$ be a non-developable surface and let $Z \subset \p^3$ be a 
curve. Then the family of lines~$L$ that intersect~$Z$ at a smooth point~$q$ and 
such that the tangent to~$Z$ at~$q$ is contained in a tangent plane of~$S$ at a 
smooth point contained in~$L$ does not fill~$\p^3$.
\end{lemma}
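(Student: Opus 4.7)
The plan is to follow the focal-geometric template of Lemmas~\ref{lemma:secant}--\ref{lemma:asymptotic}: assuming, for contradiction, that the family fills~$\p^3$, I would exhibit on a general member~$L$ a focal divisor of degree at least~$3$, which is incompatible with Theorem~\ref{theorem:focal}.

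The first step is to observe that the hypothesis automatically forces $L$ to be tangent to $S$ at~$p$. Indeed, the projective tangent line $T_q Z$ passes through~$q$, so $T_q Z \subset T_p S$ implies $q \in T_p S$; combined with $p \in T_p S$, this gives $L = \operatorname{line}(q,p) \subset T_p S$. Hence Theorem~\ref{theorem:focal}(a) produces a focus on~$L$ at~$p$, and the last statement of Theorem~\ref{theorem:focal} produces a focus at~$q$ coming from $L \cap Z$. These account for two simple foci on~$L$.

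To locate the third focal contribution, I would exploit the surplus content of the condition $T_q Z \subset T_p S$ over the bare tangency $L \subset T_p S$: beyond $q \in T_p S$, it further requires the tangent direction to $Z$ at~$q$ to lie in~$T_p S$. Translating this extra scalar constraint into a condition on tangent vectors of the two-dimensional parameter space of the filling family and analysing the induced infinitesimal motion of~$p$ relative to~$L$, one finds that the differential of the projection $\mscr{X} \to \p^3$ has an additional unit of corank at the fiber point over~$p$; by the focal-multiplicity framework developed in \cite[Sections~4--5]{Ciliberto2011} this promotes $p$ to a focus of multiplicity at least~$2$ on~$L$. The focal divisor on~$L$ therefore has degree at least~$3$, contradicting the degree-$2$ bound of Theorem~\ref{theorem:focal}.

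The main obstacle is to carry out the multiplicity-$2$ computation at~$p$ rigorously, via a local coordinate analysis in which the second fundamental form of~$S$ at~$p$ interacts with the first-order data of~$Z$ at~$q$; I also expect to need a separate treatment of the borderline configuration $T_q Z = L$, in which $L$ becomes tangent to~$Z$ at~$q$ and the extra focal contribution migrates from~$p$ to~$q$, yielding the same degree-$3$ count by a symmetric argument.
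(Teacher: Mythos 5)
Your focal-geometric approach does not close: the two foci you correctly identify at $p$ (tangency, Theorem~\ref{theorem:focal}(a)) and at $q$ (intersection with $Z$, last clause) already exhaust the degree-$2$ bound, so the entire argument hinges on a genuine third unit of focal multiplicity, and the claim that the constraint $T_qZ \subset T_pS$ ``promotes $p$ to a focus of multiplicity $\geq 2$'' is unjustified and, as far as I can see, false. Theorem~\ref{theorem:focal}(b)--(c) ties a double focus at a tangency point to contact order~$3$, i.e.\ to $L$ being an asymptotic tangent of~$S$ at~$p$; the condition $T_qZ \subset T_pS$ concerns the first-order data of $Z$ at the \emph{other} point $q$ and does nothing to raise the contact order of $L$ with $S$ at $p$, which generically stays at~$2$. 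Your heuristic conflates a constraint that cuts down the parameter space $D$ (which is what the extra scalar equation really does) with a degeneracy of $d\pi$ at a particular fiber point over $p$; these are not the same, and absent a concrete computation in the Ciliberto--Flamini focal framework there is no reason for the corank to jump. So the focal count gives exactly two foci and no contradiction.

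The paper's proof abandons foci entirely and runs a dimension count in the dual $\check{\p}^3$: let $\mscr{Y}$ be the family of planes $T_pS$ postulated by the hypothesis. Each such plane is simultaneously tangent to $S$ (at $p$) and tangent to $Z$ (at $q$), so $\mscr{Y}$ lies in the intersection of the dual surface of $S$ with the tangent developable of $Z$ inside $\check{\p}^3$. If $\mscr{Y}$ were $2$-dimensional these two irreducible surfaces would have to coincide, which is impossible since one is developable in $\check{\p}^3$ and the other, as $S$ is non-developable, is not; if $\mscr{Y}$ is $1$-dimensional, a general plane $\Pi\in\mscr{Y}$ carries a $1$-parameter family of lines from the filling family, forcing $\Pi$ to be tangent to $S$ at infinitely many points and hence $S$ to be developable. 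Either way the hypothesis is contradicted. You should replace the focal argument with this dual-space dimension count.
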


\begin{proof}
Assume indirectly that the family of lines is filling. Let $\mscr{Y}$ be the 
family of tangent planes to~$S$ whose existence is postulated by the assumption 
(these planes have to be tangent to~$Z$ as well). We distinguish two cases. 
First, suppose that $\mscr{Y}$ is two-dimensional. The family~$\mscr{Y}$ is 
contained in the two-dimensional family of tangent planes to~$S$, and in the 
two-dimensional family of tangent planes to~$Z$, and both families are 
irreducible. Moreover, the second family forms a tangent developable surface in 
the dual projective space, while the first one does not. So the two irreducible 
families cannot be equal and therefore intersect in a family of dimension one, 
which contradicts the assumption. Second, suppose that $\mscr{Y}$ is 
one-dimensional. Then there are infinitely many lines of the filling family 
contained in a general plane in~$\mscr{Y}$. It follows that there are infinitely 
many points at which such a plane is tangent to~$S$. Then the surface~$S$ has 
only a one-dimensional family of tangent planes, which implies that it is a 
developable surface. This contradicts the assumption.
\end{proof}

To sum up, suppose we have a good projection $S \longrightarrow \p^2$. If $B$ 
is the proper silhouette and $W$ is the singular image of the surface~$S$, then 
the curve~$B \cup W$ has only the following $7$ types of singularities, which 
we call \emph{special points} (see \Cref{figure:singularities}):
\begin{itemize}
 \item[-] nodes or cusps of~$B$,
 \item[-] nodes or triple points of~$W$,
 \item[-] tangential intersections of~$B$ and~$W$,
 \item[-] transversal intersections of~$B$ and~$W$ whose preimages are distinct,
 \item[-] transversal intersections of~$B$ and~$W$ coming from pinch points.
\end{itemize}
\begin{figure}
 \centering
 \begin{tabular}{cccc}
  \includegraphics[width=.18\textwidth]{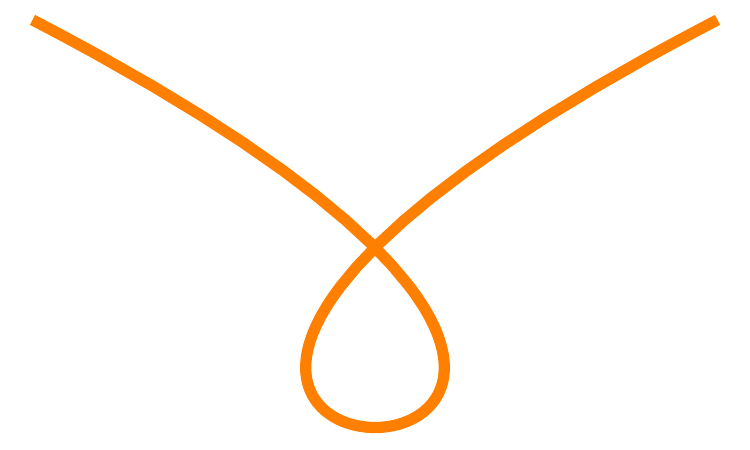} &
  \includegraphics[width=.18\textwidth]{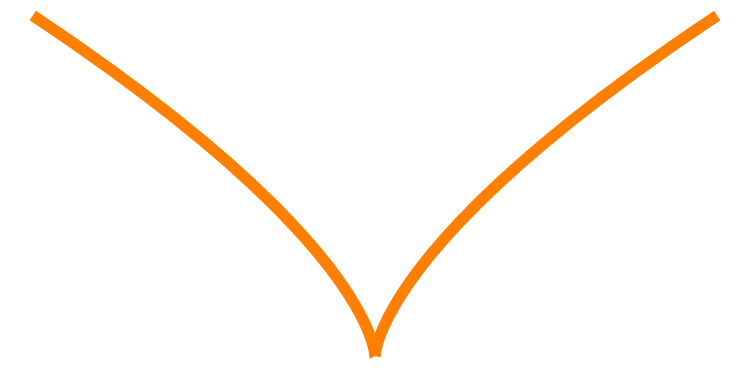} &
  \includegraphics[width=.18\textwidth]{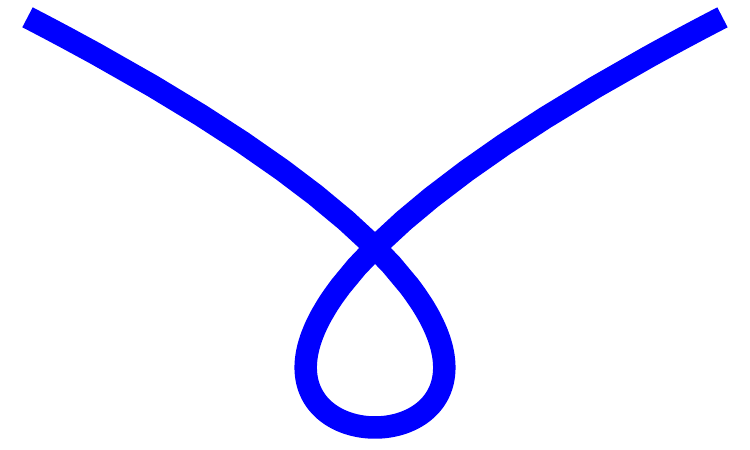} &
  \includegraphics[width=.18\textwidth, height=1.6cm]{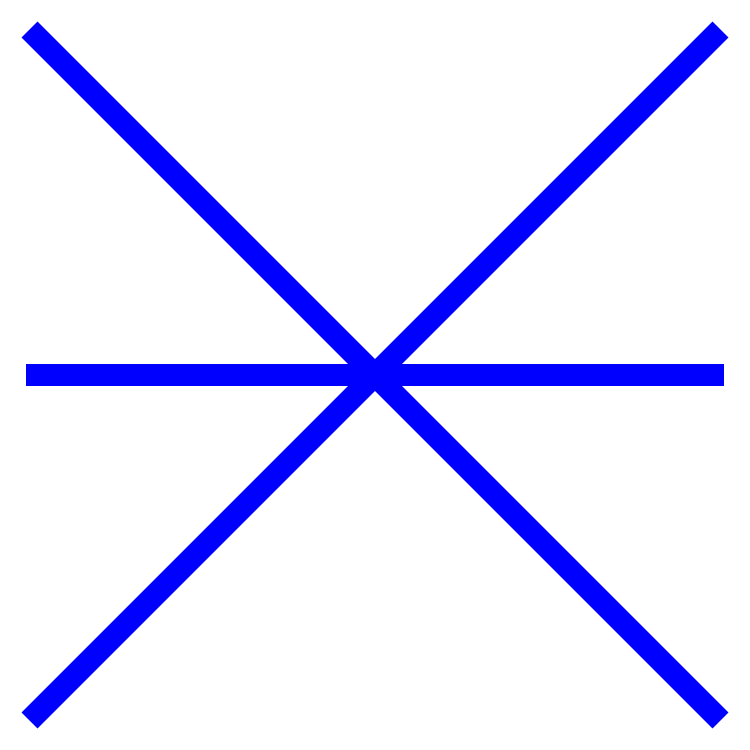}
 \end{tabular}
 
 \begin{tabular}{ccc}
  \includegraphics[width=.18\textwidth, height=2cm]{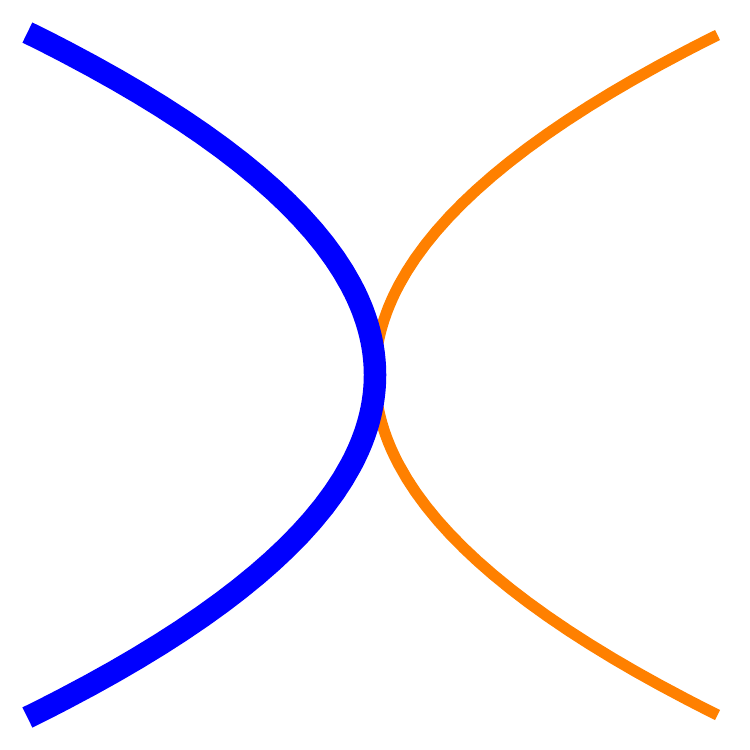} &
  \includegraphics[width=.18\textwidth, height=2cm]{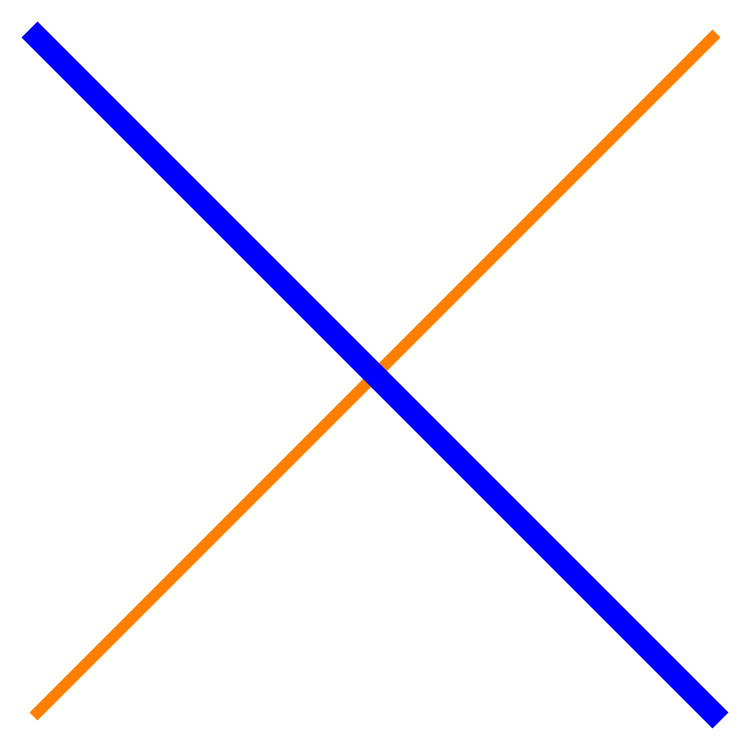} &
  \includegraphics[width=.18\textwidth, height=2cm]{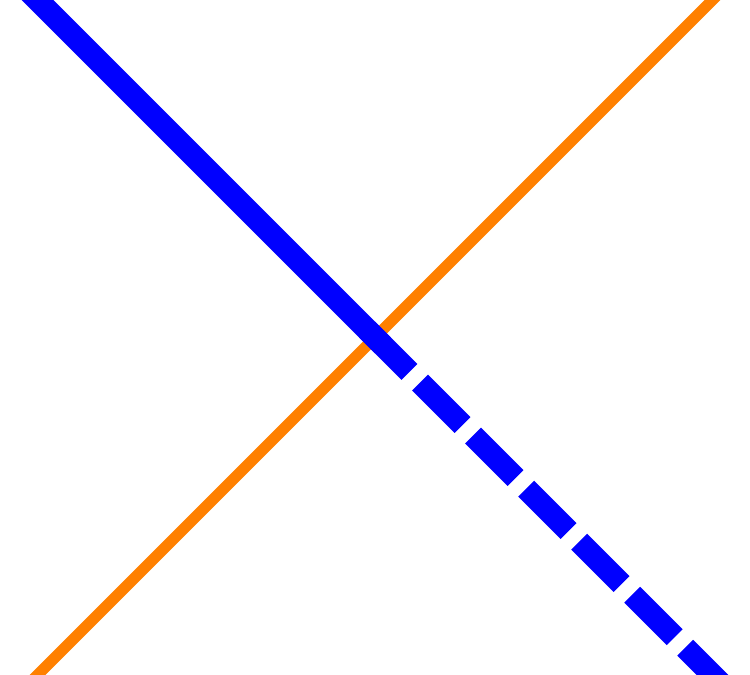}
 \end{tabular}
 \caption{The seven possible singularities of the union of the proper 
silhouette (thinner, in orange) and the singular image (thicker, 
in blue) of a surface in~$\p^3$. The case of a singularity coming from a pinch 
point of the surface is denoted by a dotted line.}
 \label{figure:singularities}
\end{figure}

\section{Reconstruction of smooth surfaces}
\label{reconstruction_smooth}

The question of reconstructing a smooth surface from its silhouette has been 
answered by d'Almeida in~\cite{dAlmeida1992}. We report his construction --- 
without any claim of originality --- because it introduces several key concepts 
that will be used later in \Cref{reconstruction_general} to deal with the 
more general case of surfaces with ordinary singularities.

The silhouette of a good projection of a smooth surface in~$\p^3$ of degree~$d$ 
is a curve of degree~$d(d-1)$ with only nodes and cusps as singularities (see 
\Cref{figure:smooth_quartic}). The contour, also of degree~$d(d-1)$, is a 
smooth curve which is a complete intersection and hence it is \emph{linearly 
normal}, namely it is not the projection of a non-degenerate curve living in a 
bigger projective space. Therefore, we can reconstruct the contour from the 
silhouette as its linear normalization (see \cite[Definition~2.11]{Zak1993}). 
Once we have access to the ideal of the contour, the unique form of degree~$d-1$ 
must be the derivative in the direction of the projection of the yet 
to-be-determined equation of the surface. Finding such an equation becomes then 
a problem in linear algebra, which admits a unique solution.
\begin{figure}
 \centering
 \begin{tabular}{cc}
  \includegraphics[width=.4\textwidth]{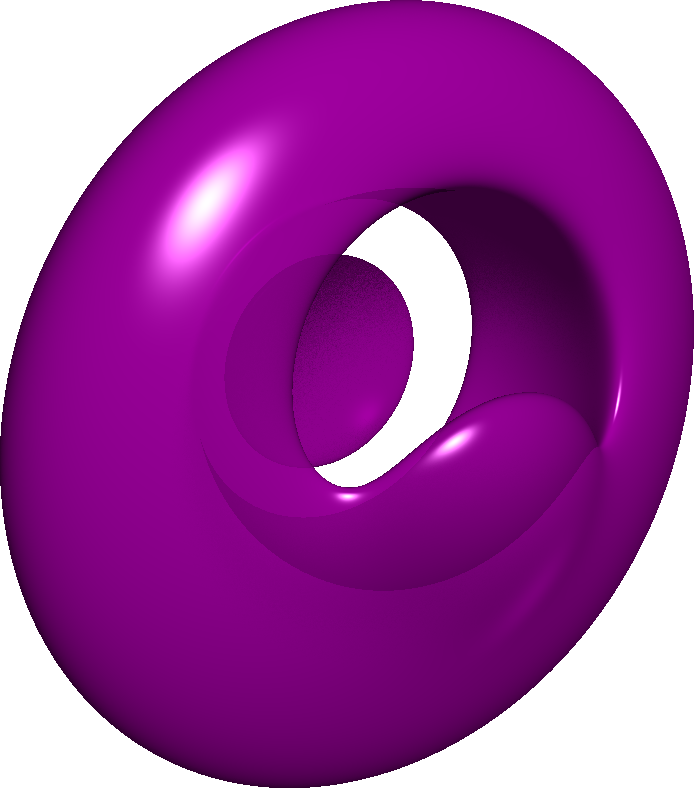}
  &
  \includegraphics[width=.4\textwidth]{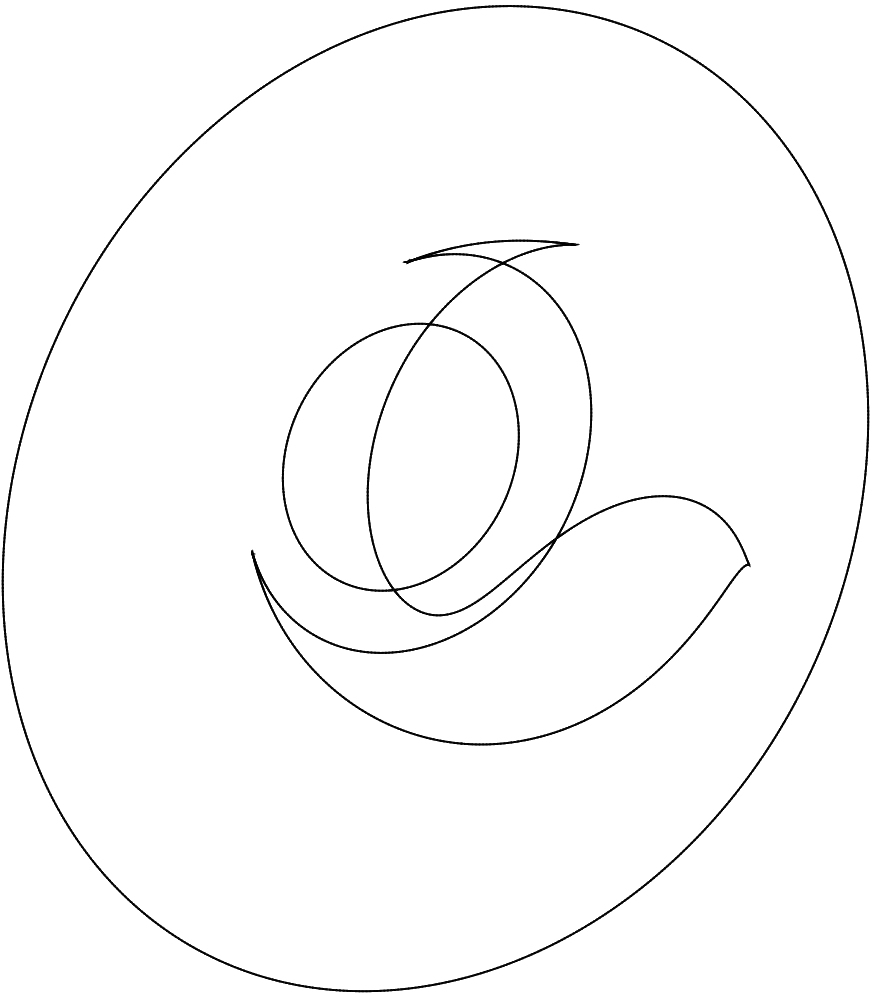}
 \end{tabular}
 \caption{A smooth surface of degree~$4$ (on the left) and its silhouette in 
the plane (on the right).}
 \label{figure:smooth_quartic}
\end{figure}

We start by the reconstruction of the contour. 

\begin{remark}
\label{remark:strategy}
The key fact here is that 
$\OO_{R}(1)$, the line bundle embedding the contour~$R$ in~$\p^3$, is a twist 
of the canonical sheaf~$\omega_R$ of~$R$; by the theory of adjoints, one proves 
that $\pi_{\ast} (\omega_R)$ is a twist of the ideal of singularities of 
the silhouette~$B$.
 The global sections of~$\pi_{\ast}  \OO_{R}(1)$ can then be obtained as  
homogeneous forms of a certain degree passing through the singularities of~$B$. 
In this way we get a way to map~$B$ into~$\p^3$ whose image is projectively 
equivalent to~$R$.
\end{remark}

\begin{lemma}
\label{lemma:linearly_normal}
 The contour~$R$ of a good projection is linearly normal. This means that the 
standard map $\HH^0 \bigl( \p^3, \OO_{\p^3}(1) \bigr) \longrightarrow \HH^0 
\bigl( R, \OO_{R}(1) \bigr)$ is an isomorphism. In particular, $\HH^0 
\bigl( R, \OO_{R}(1) \bigr)$ is $4$-dimensional.
\end{lemma}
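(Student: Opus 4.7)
The plan is to exploit the fact that the contour of a good projection of a smooth surface is a complete intersection in~$\p^3$. Choose coordinates so that the projection $\p^3 \dashrightarrow \p^2$ is along the $w$-axis, and let $F \in \C[x,y,z,w]$ of degree~$d$ be an equation of the smooth surface~$S$. Then $R$ is scheme-theoretically cut out by $F$ and $\partial_w F$, so it is a complete intersection of type $(d, d-1)$, and its ideal sheaf admits the Koszul resolution
\[
 0 \longrightarrow \OO_{\p^3}(-2d+1) \longrightarrow \OO_{\p^3}(-d) \oplus \OO_{\p^3}(-d+1) \longrightarrow \mcal{I}_R \longrightarrow 0 \, .
\]

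Next, I would reduce linear normality to two cohomological vanishings via the standard short exact sequence
\[
 0 \longrightarrow \mcal{I}_R(1) \longrightarrow \OO_{\p^3}(1) \longrightarrow \OO_R(1) \longrightarrow 0 \, .
\]
The associated long exact sequence makes $\HH^0\bigl(\p^3, \OO_{\p^3}(1)\bigr) \to \HH^0\bigl(R, \OO_R(1)\bigr)$ an isomorphism as soon as $\HH^0\bigl(\p^3, \mcal{I}_R(1)\bigr)$ and $\HH^1\bigl(\p^3, \mcal{I}_R(1)\bigr)$ both vanish; the former is precisely non-degeneracy of~$R$ in~$\p^3$, and the latter upgrades surjectivity to isomorphism. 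To obtain both, I would twist the Koszul resolution by $\OO_{\p^3}(1)$ and take cohomology; the only inputs needed are the classical vanishings $\HH^i\bigl(\p^3, \OO_{\p^3}(k)\bigr) = 0$ for $1 \leq i \leq 2$ and every $k \in \Z$, together with $\HH^0\bigl(\p^3, \OO_{\p^3}(k)\bigr) = 0$ for $k < 0$. For $d \geq 3$ the twists $-d+1$ and $-d+2$ appearing in the middle term of the resolution are strictly negative, so the Koszul long exact sequence collapses and both desired vanishings fall out at once. The dimension count $\dim \HH^0\bigl(R, \OO_R(1)\bigr) = \dim \HH^0\bigl(\p^3, \OO_{\p^3}(1)\bigr) = 4$ is then immediate from the four coordinate functions on~$\p^3$.

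There is no serious obstacle here: once the complete intersection structure is available, the argument is a routine Koszul/cohomology chase on~$\p^3$. The single point deserving a flag is the mild numerical hypothesis $d \geq 3$, implicit throughout this section since the silhouette has degree $d(d-1)$ and is non-trivial only for such~$d$; consistently, for $d=2$ the twist $-d+2 = 0$ contributes a one-dimensional $\HH^0$, reflecting the fact that the contour of a smooth quadric degenerates to a planar conic and therefore fails to be linearly normal in~$\p^3$.
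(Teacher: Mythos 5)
Your proof is correct and is an unpacking of the paper's one-line argument: the authors simply invoke the standard fact that a smooth complete intersection in $\p^n$ is linearly normal, and you supply the Koszul-resolution-plus-cohomology-chase that establishes it. (The paper actually writes out this same Koszul argument for the fat contour~$Y$ in Section~4, so you are using exactly the machinery they rely on.) Your remark about $d \geq 3$ is a genuine observation the paper leaves implicit: for $d = 2$ the contour is a conic lying in the polar plane, so it is degenerate in~$\p^3$ and $\HH^0(R,\OO_R(1))$ is only $3$-dimensional; the lemma as stated does require $d \geq 3$, which is of course the only interesting range for silhouette reconstruction.
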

\begin{proof}
 Since $R$ is a smooth complete intersection, it is linearly normal.
\end{proof}

\begin{lemma}
\label{lemma:canonical_contour}
The canonical sheaf~$\omega_R$ of~$R$ is isomorphic to~$\OO_{R}(2d - 5)$. 
Moreover, the canonical sheaf~$\omega_{B} = \pi_{\ast}(\omega_R)$ of the 
silhouette~$B$ is isomorphic to~$\mscr{J}\bigl(d^2 - d - 3\bigr)$, where 
$\mscr{J}$ is the restriction to~$B$ of the ideal sheaf~$\mscr{K}$ 
on~$\p^2$ of the singularities of~$B$.
\end{lemma}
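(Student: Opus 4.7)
The plan is to compute $\omega_R$ by adjunction on $\p^3$ and then to identify $\pi_*\omega_R$ with a twist of the conductor of $B$ via Grothendieck duality.

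First, I would observe that the proper contour $R$ is cut out on the smooth surface $S = \{F=0\}$ of degree $d$ by the polar surface $\{aF_x+bF_y+cF_z+eF_w=0\}$ of degree $d-1$, where $(a:b:c:e)$ is the center of projection. Since $R$ is smooth by condition~\eqref{enum:ciliberto}, it is a smooth complete intersection of hypersurfaces of degrees $d$ and $d-1$ in $\p^3$, and adjunction gives
\[
 \omega_R \;\cong\; \omega_{\p^3}|_R \otimes \OO_R(d) \otimes \OO_R(d-1) \;=\; \OO_R(2d-5).
\]

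For the second part, I would use Grothendieck duality for the finite birational morphism $\pi\colon R \to B$, which realises $R$ as the normalisation of $B$ (since $R$ is smooth and $\pi$ is birational by condition~\eqref{enum:injective}). The silhouette $B$ is a plane hypersurface of degree $m = d(d-1)$, hence Gorenstein, with invertible dualising sheaf $\OO_B(m-3)$. Grothendieck duality for a finite morphism then gives
\[
 \pi_*\omega_R \;\cong\; \HHom_{\OO_B}\!\bigl(\pi_*\OO_R,\, \OO_B(m-3)\bigr) \;\cong\; \mathscr{C}\otimes \OO_B(m-3),
\]
where $\mathscr{C} = \HHom_{\OO_B}(\pi_*\OO_R,\OO_B) \subset \OO_B$ is the conductor ideal of the normalisation.

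The final step is the local identification of $\mathscr{C}$ with $\mscr{J}$. At a node of $B$ with local equation $xy=0$ the normalisation separates the two branches and the conductor is the maximal ideal $(x,y)$; at an ordinary cusp with local equation $y^2=x^3$, parametrised by $x=t^2$, $y=t^3$, the conductor is again $(x,y)$. Since $B$ has only these two types of singularities by condition~\eqref{enum:ciliberto}, the conductor $\mathscr{C}$ coincides stalk-by-stalk with the ideal of the reduced singular locus of $B$, namely $\mscr{J}$. Combining everything yields
\[
 \omega_B \;=\; \pi_*\omega_R \;\cong\; \mscr{J}(d^2-d-3).
\]

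The main technical point is the local conductor computation for the cusp, which is classical but does require a small calculation; everything else is direct bookkeeping with adjunction and Grothendieck duality.
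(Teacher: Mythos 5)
Your computation of $\omega_R$ is identical to the paper's (adjunction for a smooth complete intersection of degrees $d$ and $d-1$ in $\p^3$). For the identification of $\pi_*\omega_R$, however, you take a genuinely different route: the paper simply cites the classical theory of adjoints for plane curves with nodes and cusps (Fulton, \emph{Algebraic Curves}, Ch.~8, Prop.~8), while you unpack that statement into a Grothendieck-duality argument. You view $R$ as the normalization of the plane curve $B$ of degree $m = d(d-1)$, use finite duality to write $\pi_*\omega_R \cong \HHom_{\OO_B}(\pi_*\OO_R, \OO_B(m-3))$, factor out the invertible twist to isolate the conductor ideal $\mathscr{C} = \HHom_{\OO_B}(\pi_*\OO_R,\OO_B)$, and then compute $\mathscr{C}$ stalkwise at nodes and at ordinary cusps, observing that in both cases it is the maximal ideal, hence $\mathscr{C} = \mscr{J}$. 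Both arguments are correct and carry the same content. The main merit of your version is that it anticipates, in the smooth case, precisely the machinery the paper develops for surfaces with ordinary singularities: Lemma~\ref{lemma:global_sections} and Proposition~\ref{proposition:ideal_sheaf} there carry out the same $\HHom_{\OO_C}(\pi_*\OO_Y,\OO_C)$ conductor computation, so your proof would let one treat the smooth and general reconstructions uniformly. The price is the two explicit local conductor computations, which the citation to Fulton lets the paper skip.
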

\begin{proof}
  The statement regarding the canonical sheaf of~$R$ follows from the fact that 
$R$ is the complete intersection of two surfaces of degree~$d$ and~$d-1$, and 
from the adjunction formula, see \cite[Exercise~II.8.4e]{Hartshorne1977}. Since 
$B$ has degree~$d(d-1)$, the theory of adjoints for plane curves shows that 
\[
 \omega_B \cong \mscr{J}\bigl(d(d-1) - 3\bigr) 
 = 
 \mscr{J}\bigl(d^2 - d - 3\bigr) \, ,
\]
see \cite[Chapter~8, Proposition~8]{Fulton1989} for the case of curves with 
only nodes, the situation of cusps is analogous.
\end{proof}

\begin{proposition}
\label{proposition:embedding_contour}
 The complete linear series~$|\omega_B(-2d + 6) |$ maps~$B$ to~$\p^3$, and 
the image of this map is, up to projective equivalence in~$\p^3$ over~$B$, 
equal to~$R$. These linear series correspond to global sections of 
$\mscr{J}(d^2-3d+3)$.
\end{proposition}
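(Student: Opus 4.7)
The plan is to combine the adjunction computations of Lemma~\ref{lemma:canonical_contour} with the projection formula to identify the complete linear series $|\omega_B(-2d+6)|$ with the embedding series $|\OO_R(1)|$ on the contour. The final assertion is immediate: tensoring $\omega_B \cong \mscr{J}(d^2-d-3)$ by $\OO_B(-2d+6)$ gives $\omega_B(-2d+6) \cong \mscr{J}(d^2-3d+3)$, and since $d^2-3d+3 < d^2-d = \deg B$ no section of this sheaf is forced to vanish on all of $B$, so its global sections are exactly the forms of degree $d^2-3d+3$ on $\p^2$ vanishing at the singularities of $B$.

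For the main claim, the key observation is that since the projection centre $p$ lies outside the smooth surface $S$, and hence off the contour $R$, the invertible sheaf $\OO_R(1)$ coincides with $(\pi|_R)^*\OO_B(1)$. Combining this with the adjunction identification $\omega_R \cong \OO_R(2d-5)$ of Lemma~\ref{lemma:canonical_contour}, I rewrite
\[
 \OO_R(1) \;\cong\; \omega_R \otimes (\pi|_R)^*\OO_B(6-2d).
\]
The projection formula applied to the finite morphism $\pi|_R \colon R \to B$ then yields
\[
 (\pi|_R)_*\OO_R(1) \;\cong\; (\pi|_R)_*\omega_R \otimes \OO_B(6-2d) \;\cong\; \omega_B(-2d+6) \;\cong\; \mscr{J}(d^2-3d+3),
\]
where the middle isomorphism is the defining identification $\pi_*\omega_R = \omega_B$ of Lemma~\ref{lemma:canonical_contour}. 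Taking global sections gives the canonical identification $\HH^0(R,\OO_R(1)) \cong \HH^0(B,\omega_B(-2d+6))$.

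To conclude, by Lemma~\ref{lemma:linearly_normal} the four-dimensional series $|\OO_R(1)|$ realizes $R$ as the contour in $\p^3$, and fixing a basis of $\HH^0(B,\omega_B(-2d+6))$ is equivalent, via the isomorphism above together with pull-back along $\pi|_R$, to fixing a basis of $\HH^0(R,\OO_R(1))$. The rational map $B \dashrightarrow \p^3$ defined by such a basis therefore factors through the birational morphism $\pi|_R$, and the resulting morphism $R \to \p^3$ coincides with the embedding of $R$ into $\p^3$ up to the projective automorphism induced by the change of basis; hence the closure of the image of the rational map is projectively equivalent to $R$. The step requiring the most care is the projection-formula identification $(\pi|_R)_*\OO_R(1) \cong \omega_B(-2d+6)$, which simultaneously relies on the geometric hypothesis $p \notin R$ (so that $\OO_R(1)$ is genuinely a pull-back from $B$) and packages the classical theory of adjoints for plane curves in the form $\pi_*\omega_R = \omega_B$.
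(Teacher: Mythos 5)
Your argument is correct and follows essentially the same route as the paper's: identify $\OO_R(1)$ with $\omega_R(-2d+6)$ via adjunction, transfer this through the projection to $\omega_B(-2d+6)$, and invoke linear normality of $R$ to conclude that the resulting rational map $B \dashrightarrow \p^3$ has image projectively equivalent to $R$ over $B$. The paper compresses the transfer step into the single sentence ``the projection $R \to B$ determines an isomorphism between the global sections of $\omega_R$ and $\omega_B$,'' whereas you usefully unpack it by first noting $\OO_R(1) \cong (\pi|_R)^*\OO_B(1)$ (because $p \notin R$) and then applying the projection formula to get $(\pi|_R)_*\OO_R(1) \cong \omega_B(-2d+6)$ on the nose; this is a fair articulation of what the paper's appeal to ``the construction'' is implicitly using, not a different proof.
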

\begin{proof}
 We showed in \Cref{lemma:canonical_contour} that there is an isomorphism 
$\omega_R(-2d+6) \cong \OO_R(1)$. Recall that the latter divisor is the one 
providing the embedding of the contour~$R$ in~$\p^3$, and in this embedding $R$ 
is linearly normal. The projection $R \longrightarrow B$ determines an 
isomorphism between the global sections of~$\omega_R$ and $\omega_B$. By 
construction, the image of~$B$ under the complete linear series~$| \omega_B(-2d 
+ 6) |$ is also linearly normal, and so must coincide up to projective 
equivalence over~$B$ with~$R$. The last statement follows from the second part 
of \Cref{lemma:canonical_contour}.
\end{proof}

Since $\omega_R(-2d+6) \cong \OO_R(1)$, it follows that $\h^0 \bigl( 
\omega_R(-2d+6) \bigr) = 4$. Thus, there are exactly~$4$ 
linearly independent forms of degree $d^2 - 3d + 3$ in the ideal~$J$ defining 
the sheaf~$\mscr{J}$. Since $\omega_R(-2d+5) \cong \OO_R$, it follows that 
$\omega_B(-2d+5) \cong \OO_B$, and so $\mscr{J}(d^2 - 3d + 2)$ has a 
one-dimensional space of global sections. 

Notice that for all $n \in \N$ there is the following exact sequence:
\[
 \xymatrix{
  0 \ar[r] & \mscr{I}(n) \ar[r] & \mscr{K}(n) \ar[r] & \mscr{J}(n)\ar[r] & 0 
 } \, ,
\]
where $\mscr{I}$ is the ideal sheaf of~$B$ on~$\p^2$. Taking global sections, 
we get:
\[
 \xymatrix{
  0 \ar[r] & \operatorname{H}^0 \bigl(\mscr{I}(n) \bigr) \ar[r] &
  \operatorname{H}^0 \bigl(\mscr{K}(n) \bigr) \ar[r] &
  \operatorname{H}^0 \bigl(\mscr{J}(n) \bigr) \ar[r] &
  \operatorname{H}^1 \bigl(\mscr{I}(n) \bigr)
 } \, .
\]
Since $B$ has degree~$d(d-1)$, we have $\mscr{I}(n) \cong 
\OO_{\p^2}(-d(d-1)+n)$. It follows that $\operatorname{H}^1\bigl(\mscr{I}(n) 
\bigr)=0$ for all $n \in \N$. Thus, global sections of~$\mscr{J}(n)$ are 
restrictions of global sections of~$\mscr{K}(n)$ in~$\p^2$.
Hence there exists a unique (up to scalars) form~$G_1$ of degree $d^2 - 3d + 
2$ in the ideal~$K$ of singularities of~$B$, and there is a unique form~$G_2$ 
of degree $d^2 - 3d + 3$ up to scalars and multiples of~$G_1$.

\Cref{proposition:embedding_contour} implies that the contour~$R$ 
can be obtained by mapping the silhouette~$B$ via the rational map from~$\p^2$ 
to~$\p^3$ given by three multiples of~$G_1$ by linearly independent linear 
forms, and~$G_2$, see Steps~$2$ and~$3$ in Algorithm 
\texttt{ReconstructSmoothSurface}. If we take coordinates so that the 
projection 
$S \longrightarrow \p^2$ is the map forgetting the last coordinate, then the 
three linear forms can be taken to be $x$, $y$ and $z$; in this way, the 
rational map $\p^2 \dashrightarrow \p^3$ is
\[ (x:y:z) \mapsto (x:y:z:G_2/G_1) \]
and it is a section of the projection, see Step~$4$ of Algorithm 
\texttt{ReconstructSmoothSurface}.

Once the contour is reconstructed, let $I$ be its homogeneous ideal 
in~$\C[x,y,z,w]$. By hypothesis, the minimal non-zero 
homogeneous component of~$I$ is the one in degree~$d-1$. This component is 
one-dimensional, hence the derivative~$H$ of the equation of the surface in the 
direction of the projection is uniquely determined up to scalars. Now, it is 
enough to compute a form~$F$ of degree~$d$ in~$I$ such that its derivative 
is~$H$. This amounts to solving a system of linear equations, 
see Steps~$5$ and~$6$ of Algorithm \texttt{ReconstructSmoothSurface}. In fact, 
suppose that the projection direction is the one along the $w$-axis; by 
integration we can compute a primitive~$\tilde{H}$ of~$H$; then we make an 
ansatz for the integration constant, which must be a homogeneous polynomial~$N$ 
of degree~$d$ depending only on~$x$, $y$ and $z$. Reducing the polynomial 
$\tilde{H} + N$ modulo a Gr\"obner basis of~$I$ gives linear equations for the 
coefficients of~$N$.

\textbf{Claim.} This linear system has a unique solution.

\textit{Proof.} Suppose that $F_1$ and $F_2$ are two different solutions; then 
there are constants~$a$ and~$b$ such that $F := a F_1 + b F_2$ is an element 
of~$I$ such that its derivative 
along the direction of the projection is zero. 
This means that $F$ is the equation of a cone of degree~$d$ passing through the 
contour~$R$ whose vertex is the projection center. The projection of the 
cone would be a component of degree~$d$ of the silhouette. This is absurd 
because the silhouette is irreducible of degree~$d(d-1)$. \hfill $\square$

This proves that Algorithm \texttt{ReconstructSmoothSurface} is correct and 
that every smooth surface having branching locus~$B$ is projectively 
equivalent over~$B$ to the output.

\begin{algorithm}
\caption{\texttt{ReconstructSmoothSurface}}
\begin{algorithmic}[1]
  \Require A curve $B \subset \p^2$, the silhouette of a good projection 
to~$\p^2$ of a smooth surface~$S \subset \p^3$ of degree~$d$.
  \Ensure A smooth surface~$S \subset \p^3$ together with a projection 
to~$\p^2$ such that $B$ is the branching locus of this projection.
  \Statex
  \State {\bfseries Compute} the radical $K$ of the Jacobian ideal 
of~$B$.
  \State {\bfseries Select} in $K$ a form $G_1$ of degree $d^2-3d+2$.
  \State {\bfseries Select} in $K$ a form $G_2$ of degree $d^2-3d+3$ 
which is not a multiple of~$G_1$.
  \State {\bfseries Compute} the ideal~$I$ of the image $R$ of $B$ under the 
map \[ (x:y:z) \mapsto (x:y:z:G_2/G_1). \]
  \State {\bfseries Select} in $I$ a form~$H$ of degree~$d-1$.
  \State {\bfseries Select} in $I$ a form~$F$ whose derivative is a scalar 
multiple of $H$.
  \State \Return $F$.
\end{algorithmic}
\end{algorithm}

\section{Reconstruction of surfaces with ordinary singularities}
\label{reconstruction_general}

In this section we present a reconstruction algorithm for good projections $S 
\longrightarrow \p^2$. It subsumes the previous case presented in 
\Cref{reconstruction_smooth}. The idea is similar to the one in the 
smooth case: we first reconstruct the contour, and then we obtain the surface 
via linear algebra. However, now it is not enough to compute the normalization 
of the silhouette, because the contour may be singular. Instead, we solve local 
reconstruction problems for each of the seven types of special points that can 
arise in the silhouette and obtain the global result by sheaf theory.

Recall that we denote by~$Z$ the singular locus of~$S$ and by~$R$ the proper 
contour of a good projection; moreover, we denote by~$W$ the singular image, 
and by~$B$ the proper silhouette. For our purposes, the set-theoretic 
description of the contour is insufficient, so we define two scheme-theoretic 
notions. 

\begin{definition}
The \emph{fat contour}~$Y$ is the one-dimensional scheme defined by the equation 
of surface~$S$ and its derivative in the direction of the projection. This 
scheme is supported on the set~$Z\cup R$.

The \emph{fat silhouette}~$C$ is the one-dimensional scheme defined by the 
discriminant of the equation of the surface. This scheme is supported on the 
set~$W\cup B$.
\end{definition}
\begin{proposition} 
\label{proposition:generic_iso}
A good projection maps~$Y$ onto~$C$ and it is an isomorphism except over the 
special points of~$C$.
\end{proposition}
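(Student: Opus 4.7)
The plan is to show (i) that set-theoretically $\pi(Y) = C$, and (ii) that at every point of $C$ which is \emph{not} a special point the map $\pi \colon Y \to C$ is a local isomorphism of schemes. Part (i) is essentially the defining property of the discriminant: if $F \in \C[x,y,z][w]$ defines $S$ and we project along $w$, then the discriminant $\Delta_w(F)$ is the resultant of $F$ and $\partial_w F$ with respect to $w$, which vanishes at $(x_0,y_0,z_0)$ if and only if $F(x_0,y_0,z_0,w)$ and $\partial_w F(x_0,y_0,z_0,w)$ share a common root. Hence $\pi(Y) = C$ as sets, and in particular the scheme-theoretic image is contained in $C$; comparing degrees (both curves have degree $d(d-1)$) will force equality.

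For (ii), I would work locally around a non-special point $q \in C$ and split into two cases.

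\textbf{Case 1: $q \in B \setminus W$ is smooth on $B$.} By condition~\eqref{enum:contour} of a good projection, the line through the center meets $S$ at the unique preimage $p \in R$ with multiplicity exactly $2$. After a Tschirnhaus transformation in $w$ we may write the local equation of $S$ as $F = w^2 - g(x,y)$ with $g(0,0) = 0$. Then the ideal $(F, \partial_w F) = (w^2 - g, 2w) = (w, g)$, so $Y$ is locally the reduced scheme $\{w = 0, g(x,y)=0\}$, while $\Delta_w(F) = -4g$, so $C$ is locally $\{g(x,y) = 0\}$; the projection is manifestly an isomorphism.

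\textbf{Case 2: $q \in W \setminus B$ is smooth on $W$.} By condition~\eqref{enum:injective} together with the fact that $q$ is not special, $q$ has a unique preimage on the double curve $Z$, which is a smooth point of $Z$. By the definition of ordinary singularities, locally $S$ has equation $F = (w - a(x,y))(w - b(x,y))$ with $a \neq b$ away from the vanishing locus of $a - b$. Then $\partial_w F = 2w - a - b$, so the fat contour ideal is $(w - \tfrac{a+b}{2},\,(a-b)^2)$, a non-reduced scheme of multiplicity $2$ along $Z$. On the other hand, $\Delta_w(F) = (a-b)^2$, so $C$ is locally $\{(a-b)^2 = 0\}$ with the same non-reduced structure. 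The projection induces the ring isomorphism $\C[x,y]/((a-b)^2) \xrightarrow{\sim} \C[x,y,w]/(w - \tfrac{a+b}{2}, (a-b)^2)$.

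The surjectivity of $\pi\colon Y\to C$ scheme-theoretically then follows because the two computed multiplicities match the generic multiplicities of $C$ along $B$ (namely $1$) and along $W$ (namely $2$). The main obstacle is being careful with the non-reduced structure in Case~2: one must verify not merely that $Y_{\mathrm{red}} \to C_{\mathrm{red}}$ is an isomorphism there but that the embedded/thickened structures agree, which is exactly what the above computation yields. Everything else is a direct local calculation using the normal forms guaranteed by the definition of ordinary singularities and by the conditions \eqref{enum:injective}--\eqref{enum:intersection} of a good projection.
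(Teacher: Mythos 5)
Your proof is correct and takes essentially the same approach as the paper: a local analysis splitting into the case of a smooth point of the proper silhouette (preimage on $R$, where you show $(F,\partial_w F)$ is reduced and matches $(\Delta_w F)$) and the case of a smooth point of the singular image (preimage on $Z$, where the double structure $((a-b)^2)$ matches on both sides). The paper handles the first case slightly more abstractly (reducedness of $Y$ along $R$ plus set-theoretic injectivity), but your explicit local computations establish the same thing.
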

\begin{proof}
Since the projection is good, it is injective except over the 
special points. The component of~$Y$ supported on~$R$ is reduced because of the 
hypothesis that tangent lines through the center of projection intersect the 
surface with multiplicity~$2$ at contour points. Hence the set-theoretic 
isomorphism implies scheme-theoretic isomorphism for those points. This is not 
immediately the case for the component of~$Y$ supported on~$Z$. Locally at a 
smooth point of~$Z$ outside the contour, the surface~$S$ is analytically 
isomorphic\footnote{We can pass to the analytic category since the completion 
of a local Noetherian ring is faithfully flat, so it is enough to check the 
isomorphism property after passing to the completion (see the proof of 
\Cref{proposition:ideal_sheaf}).} to $z(z-x)=0$, the fat contour~$Y$ 
is defined by $2z-x=z(z-x)=0$, and $C$ is defined by $x^2 = 0$; hence the 
restriction of the projection to~$Y$ is an isomorphism (of schemes) with 
inverse described by the homomorphism of rings $\C[x,y,z]/(2z-x, z(z-x)) 
\longrightarrow \C[x,y]/(x^2)$ given by $([x],[y],[z]) \mapsto 
([x],[y],[x/2])$. 
\end{proof}

The strategy for reconstructing the fat contour of a good projection from the 
fat silhouette mimics the one in the smooth case. First of all, we express the 
sheaf~$\OO_Y(1)$, which provides the embedding of~$Y$ in~$\p^3$, as a twist of 
the dualizing sheaf~$\omega^{\circ}_Y$, which is a substitute in the non-smooth 
setting for the canonical sheaf. Using the ``upper shriek'' operation, in 
\Cref{lemma:dualizing,lemma:global_sections} we 
connect the dualizing sheaves of~$Y$ and~$C$, and obtain that in order to 
determine the direct image of~$\OO_Y(1)$ under a projection~$\pi$, it is 
enough to compute (a twist of) the sheaf $\HHom_{\OO_C} \bigl(\pi_{\ast} 
\OO_{Y}, \OO_C \bigr)$, which is supported at the special points of~$C$. The 
latter comes with a natural map to~$\OO_C$, and we show that this map is 
injective, proving that $\HHom_{\OO_C} \bigl(\pi_{\ast} \OO_{Y}, \OO_C 
\bigr)$ is an ideal sheaf. Therefore, the problem of determining a rational map 
sending~$C$ to~$Y$ becomes equivalent to the computation of the space of 
homogeneous forms of a certain degree that satisfy particular vanishing 
conditions at the special points of~$C$. This is analogous to the smooth 
situation, where we computed the adjoint forms of the silhouette.

Recall that a crucial step in the smooth situation is the fact that the 
contour~$R$ is linearly normal, or equivalently (for smooth varieties) that the 
standard map $\HH^0 \bigl( \p^3, \OO_{\p^3}(1) \bigr) \longrightarrow \HH^0 
\bigl( R, \OO_{Y}(1) \bigr)$ is an isomorphism. We prove that the latter 
condition holds also for the fat contour, which is very far from being smooth.

\begin{lemma}
 The map $\HH^0 \bigl( \p^3, \OO_{\p^3}(1) \bigr) \longrightarrow \HH^0 
\bigl( Y, \OO_{Y}(1) \bigr)$ is an isomorphism. In particular, $\HH^0 
\bigl( Y, \OO_{Y}(1) \bigr)$ is $4$-dimensional.
\end{lemma}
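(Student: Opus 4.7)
The plan is to imitate the proof of Lemma~\ref{lemma:linearly_normal} by exhibiting $Y$ as a complete intersection and then applying standard cohomology vanishing on $\p^3$. Choose coordinates so that the projection is along the $w$-axis; then $Y$ is cut out scheme-theoretically by $F$ and $\partial_w F$, two homogeneous forms of degrees $d$ and $d-1$ respectively. First I would check that these form a regular sequence: since $S$ is irreducible and the projection is good, $\partial_w F$ is not identically zero (otherwise $S$ would be a cone with vertex on the direction of projection) and is not divisible by $F$ for degree reasons, so the two hypersurfaces meet in pure dimension~$1$.

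Given the complete intersection, $\mscr{I}_Y$ admits the Koszul resolution
\[
 0 \longrightarrow \OO_{\p^3}(-2d+1) \longrightarrow \OO_{\p^3}(-d) \oplus \OO_{\p^3}(-d+1) \longrightarrow \mscr{I}_Y \longrightarrow 0.
\]
The next step is to twist by $\OO_{\p^3}(1)$ and take the associated long exact sequence in cohomology. Using the vanishings $\HH^i\bigl(\p^3, \OO_{\p^3}(k)\bigr) = 0$ for $i \in \{1,2\}$ and all $k \in \Z$, one immediately reads off $\HH^1\bigl(\mscr{I}_Y(1)\bigr) = 0$; moreover, for $d \geq 3$ the middle term $\OO_{\p^3}(-d+1) \oplus \OO_{\p^3}(-d+2)$ of the twisted resolution has no global sections, so $\HH^0\bigl(\mscr{I}_Y(1)\bigr) = 0$ as well.

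Finally, I would feed these two vanishings into the short exact sequence
\[
 0 \longrightarrow \mscr{I}_Y(1) \longrightarrow \OO_{\p^3}(1) \longrightarrow \OO_Y(1) \longrightarrow 0
\]
and take global sections, which yields the isomorphism $\HH^0\bigl(\p^3, \OO_{\p^3}(1)\bigr) \cong \HH^0\bigl(Y, \OO_Y(1)\bigr)$ and hence the stated four-dimensionality. I do not anticipate any serious obstacle: the two delicate points are the verification that $F, \partial_w F$ genuinely form a regular sequence and the mild degree assumption $d \geq 3$ (needed only for the injectivity step), both of which follow without effort from the standing hypotheses on good projections of irreducible surfaces.
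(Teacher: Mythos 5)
Your argument is essentially the paper's proof, just broken into two steps: the paper uses the full four-term Koszul complex of $\OO_Y$ directly, while you split it into the Koszul resolution of $\mscr{I}_Y$ and the ideal-sheaf short exact sequence. The computations of vanishing are the same, and you correctly identify the standing hypothesis $d \geq 3$ required for $\HH^0\bigl(\mscr{I}_Y(1)\bigr) = 0$ (for $d=2$ the fat contour is a plane conic and linear normality in $\p^3$ fails), a point the paper leaves implicit; your justification of the regular-sequence condition would more precisely rest on $F$ being irreducible and not a cone through the projection center, both of which hold in the setting of a good projection.
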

\begin{proof}
This follows from the fact that $Y$ is a complete intersection of two surfaces 
of degree~$d$ and~$d-1$, and so we have a graded free resolution of~$\OO_{Y}$ 
provided by the Koszul complex:
\[
 \resizebox{\textwidth}{!}{
 \xymatrix{
  0 \ar[r] & \OO_{\p^3}(-2d+1) \ar[r] &
  \OO_{\p^3}(-d) \oplus \OO_{\p^3}(-d+1) \ar[r] &
  \OO_{\p^3} \ar[r] & \OO_{Y} \ar[r] & 0
 }
 }
\]
Twisting by~$\OO_{Y}(1)$ and looking at the corresponding long exact sequence in 
cohomology yields the result.
\end{proof}

We now show how to reconstruct the fat contour~$Y$ and the projection 
\mbox{$\pi_{|_{Y}} \colon Y \longrightarrow C$} starting 
from the fat silhouette~$C$. As pointed out at the beginning of the section, 
this is carried out locally, and the local data are patched together using the 
fact that both schemes, being projective over a field, admit a \emph{dualizing 
sheaf} $\omega^{\circ}$ (see \cite[Proposition III.7.5]{Hartshorne1977}). 

In particular, in our case we have:
\begin{lemma}
\label{lemma:dualizing}
$\omega_Y^{\circ} \cong \OO_Y(2d-5)$ and $\omega_C^{\circ} \cong 
\OO_C(d^2-d-3)$.
\end{lemma}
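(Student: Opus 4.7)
The claim has two parts, and both are applications of the adjunction formula for (locally) complete intersection subschemes.

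For the first part, observe that the fat contour $Y$ is by definition the scheme cut out by $F$ and by the directional derivative $\partial_w F$ (assuming the projection is along the $w$-axis). Since $F$ has degree~$d$ and $\partial_w F$ has degree~$d-1$, and the previous lemma already exhibits $Y$ as a complete intersection through its Koszul resolution, $Y$ is a local complete intersection in~$\p^3$. Consequently the conormal sheaf is locally free, the normal bundle is
\[
 \mscr{N}_{Y/\p^3} \cong \OO_Y(d) \oplus \OO_Y(d-1),
\]
and the dualizing sheaf is computed by adjunction:
\[
 \omega_Y^{\circ} \cong \omega_{\p^3}|_Y \otimes \det \mscr{N}_{Y/\p^3}
 \cong \OO_Y(-4) \otimes \OO_Y(d) \otimes \OO_Y(d-1) \cong \OO_Y(2d-5).
\]
The only point that requires a comment is the existence of the adjunction formula in this generality, which is \cite[Theorem~III.7.11]{Hartshorne1977} applied to a local complete intersection, together with the fact that the dualizing sheaf is defined and agrees with the one coming from the Koszul resolution.

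For the second part, the fat silhouette $C$ is a hypersurface in~$\p^2$ cut out by the discriminant of~$F$ with respect to the projection variable. The key numerical input is that this discriminant has degree $d(d-1)$: writing $F = \sum_{i=0}^{d} a_i(x,y,z)\,w^i$ with $\deg a_i = d-i$, the discriminant in~$w$ is a weighted polynomial of total degree $d(d-1)$ in $x,y,z$, which matches the classically known degree $d(d-1)$ of the silhouette curve of a degree~$d$ surface. Hence $C$ is an effective Cartier divisor on~$\p^2$ of degree $d(d-1)$, so adjunction for a hypersurface gives
\[
 \omega_C^{\circ} \cong \omega_{\p^2}|_C \otimes \OO_C\bigl(d(d-1)\bigr)
 \cong \OO_C(-3) \otimes \OO_C\bigl(d(d-1)\bigr)
 \cong \OO_C(d^2-d-3).
\]

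The main, and essentially only, obstacle is the bookkeeping of degrees: one must verify cleanly that the projection variable can be chosen so that the derivative $\partial_w F$ has degree exactly $d-1$ and that the discriminant of $F$ in $w$ has the total degree $d(d-1)$ claimed above. Both facts follow from the assumption that the projection is good (in particular that the center of projection is general enough that $\partial_w F$ does not drop degree and that the discriminant does not identically vanish). Once the degrees are in place, each isomorphism is a one-line consequence of adjunction.
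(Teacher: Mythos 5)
Your proof is correct and follows essentially the same route as the paper: both invoke the adjunction formula for local complete intersections from \cite[Theorem~III.7.11]{Hartshorne1977}, using that $Y$ is the complete intersection of surfaces of degrees $d$ and $d-1$ in $\p^3$, and $C$ is a hypersurface of degree $d(d-1)$ in $\p^2$. The paper states this more tersely, leaving the degree bookkeeping implicit, while you spell it out; the content is identical.
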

\begin{proof}
 For a closed subscheme~$X$ of~$\p^n$ that is a local complete intersection of 
codimension~$r$, we have by \cite[Theorem III.7.11]{Hartshorne1977}
\[
 \omega_X^{\circ} \cong \omega_{\p^n} \otimes \bigwedge^r \bigl( \mscr{I} / 
\mscr{I}^2 \bigr)^{\vee} \,,
\]
where $\mscr{I}$ is the ideal sheaf of~$X$, and $(\cdot)^{\vee}$ denotes the 
dual sheaf. The claim follows from this formula and the 
definitions of~$Y$ and~$C$ as complete intersections.
\end{proof}

If we think of~$Y$ as an abstract scheme, it is embedded in~$\p^3$ via morphism 
determined by the global sections of the sheaf~$\OO_Y(1)$. Since our goal, as 
in the smooth situation, is to compute a map from~$C$ to~$\p^3$ whose image 
gives~$Y$, we link the global sections of~$\OO_Y(1)$ to the ones of a sheaf 
on~$C$.

\begin{lemma}
\label{lemma:global_sections}
 $\HH^0 \bigl( Y, \OO_Y(1) \bigr) \cong \HH^0 \Bigl( C, \HHom_{\OO_C} \bigl( 
\pi_{\ast} \OO_Y, \OO_C \bigr)(d^2-3d+3) \Bigr)$.
\end{lemma}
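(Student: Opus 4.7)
The plan is to combine Grothendieck duality for the finite morphism $\pi_{|_{Y}} \colon Y \longrightarrow C$ with the dualizing-sheaf identifications in Lemma~\ref{lemma:dualizing}.

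First I would observe that $\pi_{|_{Y}}$ is a finite morphism. It is the restriction to~$Y$ of the linear projection $\p^3 \setminus \{p\} \longrightarrow \p^2$; the center~$p$ lies off~$S$ and hence off~$Y$, so the map is defined everywhere on~$Y$. By Proposition~\ref{proposition:generic_iso} the image is~$C$ and the map has finite fibres, and being proper with finite fibres onto a one-dimensional target it is finite. In particular, since global sections are preserved by pushforward,
\[
 \HH^0\bigl(Y, \OO_{Y}(1)\bigr) \; = \; \HH^0\bigl(C, \pi_{\ast}\OO_{Y}(1)\bigr) \, ,
\]
and it suffices to identify $\pi_{\ast}\OO_{Y}(1)$ as a sheaf on~$C$.

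The engine is duality for a finite morphism, which provides the natural isomorphism
\[
 \pi_{\ast}\pi^{!}\mscr{F} \; \cong \; \HHom_{\OO_{C}}\bigl(\pi_{\ast}\OO_{Y},\, \mscr{F}\bigr)
\]
for every quasi-coherent sheaf~$\mscr{F}$ on~$C$. Composing with the structure morphisms of~$Y$ and~$C$ yields $\pi^{!}\omega_{C}^{\circ}\cong\omega_{Y}^{\circ}$, so with $\mscr{F}=\omega_{C}^{\circ}$ we obtain
\[
 \pi_{\ast}\omega_{Y}^{\circ} \; \cong \; \HHom_{\OO_{C}}\bigl(\pi_{\ast}\OO_{Y},\, \omega_{C}^{\circ}\bigr) \, .
\]
Lemma~\ref{lemma:dualizing} rewrites the left-hand side as $\pi_{\ast}\OO_{Y}(2d-5)$ and, pulling the line-bundle twist on the target slot of $\HHom$ outside, the right-hand side as $\HHom_{\OO_{C}}(\pi_{\ast}\OO_{Y},\OO_{C})(d^2-d-3)$. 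A final twist by $\OO(6-2d)$ collapses the left-hand side to $\pi_{\ast}\OO_{Y}(1)$ and the right-hand side to $\HHom_{\OO_{C}}(\pi_{\ast}\OO_{Y},\OO_{C})(d^2-3d+3)$; taking global sections and combining with the Leray-type identity above finishes the proof.

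The main obstacle will be justifying the invocation of Grothendieck duality in our non-smooth setting. Two facts resolve it: both $Y$ and $C$ are local complete intersections of the expected codimension in projective space, hence Cohen--Macaulay with dualizing sheaves that are genuine line bundles, explicitly computed in Lemma~\ref{lemma:dualizing}; and for a finite morphism the upper-shriek admits the concrete description $\pi^{!}\mscr{F}=\HHom_{\OO_{C}}(\pi_{\ast}\OO_{Y},\mscr{F})^{\sim}$, whose pushforward to~$C$ is precisely the $\HHom$-sheaf appearing above. With these ingredients in place the argument reduces to direct bookkeeping of twists.
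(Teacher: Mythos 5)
Your proposal is correct and follows essentially the same route as the paper: both invoke duality for the finite morphism $\pi|_Y$ in the form $\pi^!\omega_C^\circ\cong\omega_Y^\circ$ together with the concrete description $\pi_*\pi^!\mathscr{F}\cong\HHom_{\OO_C}(\pi_*\OO_Y,\mathscr{F})$, then plug in Lemma~\ref{lemma:dualizing} and use the projection formula to shuffle the twist. The only difference is cosmetic bookkeeping — you push forward $\OO_Y(1)$ first and manipulate sheaves on $C$, whereas the paper runs the chain of isomorphisms on $Y$ and pushes forward afterward — but the ingredients and the logical structure are identical.
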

\begin{proof}
Since the projection 
$\pi_{|_{Y}} \colon Y \longrightarrow C$ is a finite affine morphism, we have 
that $\omega_Y^{\circ} = \pi^{!} \bigl( \omega_C^{\circ} \bigr)$ by 
\cite[Exercise III.7.2]{Hartshorne1977}. The sheaf $\pi^{!} \bigl( 
\omega_C^{\circ} \bigr)$, called ``$\pi$ upper shriek'' is defined in the 
following way (see \cite[Exercise 
III.6.10]{Hartshorne1977}). The sheaf $\HHom_{\OO_C} \bigl( \pi_{\ast} 
\OO_Y, \omega_C^{\circ} \bigr)$ is both an $\OO_C$-module and a 
$\pi_{\ast} \OO_Y$-module. For affine morphisms there is a correspondence 
between $\pi_{\ast} \OO_Y$-modules and $\OO_Y$-modules (see 
\cite[Exercise II.5.17e]{Hartshorne1977}); the $\OO_Y$-module 
corresponding to $\HHom_{\OO_C} \bigl( \pi_{\ast} \OO_Y, \omega_C^{\circ} 
\bigr)$ is defined to be~$\pi^{!} \bigl( \omega_C^{\circ} \bigr)$.
From \Cref{lemma:dualizing} we get
\begin{align*}
 \HH^0 \bigl( Y, \OO_Y(1) \bigr) &\cong \HH^0 \bigl( Y, 
\omega_Y^{\circ}(-2d+6) \bigr) \\
 &\cong \HH^0 \bigl( Y, \pi^{!}(\omega_C^{\circ})(-2d+6) \bigr) \\
 &\cong \HH^0 \Bigl( C, \pi_{\ast}\bigl(\pi^{!}(\omega_C^{\circ})(-2d+6)\bigl) 
\Bigr) \\
 &\cong \HH^0 \Bigl( C, \pi_{\ast}\bigl(\pi^{!}(\omega_C^{\circ})\bigl)(-2d+6) 
\Bigr) \, ,
\end{align*}
where the latter isomorphism is given by the \emph{projection formula} (see 
\cite[Exercise II.5.1d]{Hartshorne1977}). By analyzing the correspondence 
between $\pi_{\ast} \OO_Y$-modules and $\OO_Y$-modules as hinted in 
\cite[Exercise II.5.17e]{Hartshorne1977}, one sees that 
$\pi_{\ast}\bigl(\pi^{!}(\omega_C^{\circ})\bigl)$ is $\HHom_{\OO_C} \bigl( 
\pi_{\ast} 
\OO_Y, \omega_C^{\circ} \bigr)$ as an $\OO_C$-module. So we have
\begin{align*}
 \HH^0 \bigl( Y, \OO_Y(1) \bigr) &\cong \HH^0 \Bigl( C, 
\HHom_{\OO_C} \bigl( \pi_{\ast} \OO_Y, \omega_C^{\circ} \bigr)(-2d+6) \Bigr) \\
 &\cong \HH^0 \Bigl( C, \HHom_{\OO_C} \bigl( \pi_{\ast} \OO_Y, \OO_C(d^2-d-3) 
\bigr)(-2d+6) \Bigr) \\
 &\cong \HH^0 \Bigl( C, \HHom_{\OO_C} \bigl( \pi_{\ast} \OO_Y, \OO_C 
\bigr)(d^2-3d+3) 
\Bigr) \, .
\end{align*}
Notice that $\HHom_{\OO_C} \bigl( \pi_{\ast} \OO_Y, \OO_C \bigr)$ is 
supported at the singularities of~$C$ since by 
\Cref{proposition:generic_iso} a good projection is an isomorphism 
outside them.
\end{proof}

We are going to show that $\HHom_{\OO_C} \bigl( \pi_{\ast} \OO_Y, \OO_C \bigr)$ 
is an ideal sheaf. We then compute the graded part of degree~$d^2-3d+3$ of this 
ideal. By \Cref{lemma:global_sections}, a basis of this graded part 
provides a rational map from~$C$ to~$\p^3$ defined everywhere except at the 
special points (namely, the singularities of the silhouette); the image of this 
rational map is an open subscheme~$Y^\circ$ of~$Y$ intersecting both of its 
components nontrivially. The equation of the surface~$S$ is then the only 
polynomial of degree~$d$ vanishing on~$Y^\circ$ such that its derivative in 
the direction of the projection also 
vanishes on~$Y^\circ$, and this is what we compute in Algorithm 
\texttt{ReconstructGeneralSurface}.

\begin{proposition}
\label{proposition:ideal_sheaf}
 The $\OO_C$-module $\HHom_{\OO_C} \bigl( \pi_{\ast} \OO_Y, \OO_C \bigr)$ is an 
ideal 
sheaf.
\end{proposition}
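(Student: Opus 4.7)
The plan is to construct a natural map $\mathrm{ev}_1 \colon \HHom_{\OO_C}\bigl(\pi_\ast \OO_Y, \OO_C \bigr) \to \OO_C$ and to show it is injective, thereby identifying the Hom sheaf with its image, an ideal of $\OO_C$. Since $\pi$ is affine, $\pi_\ast \OO_Y$ is a sheaf of $\OO_C$-algebras, and the unit map $\iota \colon \OO_C \to \pi_\ast \OO_Y$ lets us define $\mathrm{ev}_1(\phi) := \phi\bigl(\iota(1)\bigr)$, which we write simply as $\phi(1)$.

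First I would verify two auxiliary facts. (i) The unit map $\iota$ is injective: by Proposition~\ref{proposition:generic_iso} it is an isomorphism away from the finitely many special points of~$C$, so its kernel is a finite-length subsheaf of~$\OO_C$; but $C$ is an effective Cartier divisor on the smooth surface~$\p^2$, hence Cohen-Macaulay, so $\OO_C$ has no embedded components and the kernel must vanish. (ii) The quotient $\pi_\ast \OO_Y / \iota(\OO_C)$ is a torsion $\OO_C$-module supported at the special points; this is again direct from Proposition~\ref{proposition:generic_iso}.

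With these in place, injectivity of $\mathrm{ev}_1$ is checked on stalks. Fix $p \in C$, let $\phi$ be a germ at $p$ with $\phi(1) = 0$, and let $b \in (\pi_\ast \OO_Y)_p$. Because the class $\bar b \in (\pi_\ast \OO_Y / \OO_C)_p$ has zero-dimensional support, its annihilator is not contained in any minimal prime of~$\OO_{C,p}$, which by Cohen-Macaulayness are the only associated primes; therefore we can choose a non-zero-divisor $a \in \OO_{C,p}$ with $a \cdot b \in \iota(\OO_{C,p})$. The $\OO_C$-linearity of $\phi$ then yields $a\, \phi(b) = \phi(a b) = (ab)\, \phi(1) = 0$, and since $a$ is a non-zero-divisor and $\phi(b) \in \OO_{C,p}$, we conclude $\phi(b) = 0$. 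Hence $\phi = 0$ locally, and therefore globally.

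The main subtlety to keep in mind is the Cohen-Macaulay hypothesis on~$C$: without it, $\OO_C$ could have embedded primes and the ``clear denominators'' trick would collapse, since no non-zero-divisor annihilating $\bar b$ might exist. Luckily this is automatic here because $C$ is a plane curve cut out by a single equation. An equivalent way to read what is happening is that $\mathrm{ev}_1$ realizes $\HHom_{\OO_C}\bigl(\pi_\ast \OO_Y, \OO_C \bigr)$ as the conductor ideal of the finite birational morphism $\pi_{|_Y} \colon Y \to C$, and the proof above is really the verification that, over a Cohen-Macaulay base, any $\OO_C$-linear functional on $\pi_\ast \OO_Y$ is multiplication by $\phi(1)$ in the total ring of fractions.
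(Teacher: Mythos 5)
Your proof is correct and reaches the same conclusion as the paper, but by a somewhat different (and arguably cleaner) route. The paper also sets up the evaluation map $\varphi \mapsto \varphi(1)$ and reduces to injectivity on stalks, but it then passes to completions, invokes the Theorem on Formal Functions to identify $(\pi_\ast\OO_Y)_c \otimes \hat\OO_{C,c}$ with $\bigoplus_i \hat\OO_{Y,y_i}$, and verifies the key hypothesis (that the map becomes an isomorphism after inverting non-zerodivisors, their Lemma~\ref{lemma:total_fractions}) by a concrete power-series argument at each of the seven normal forms (Lemma~\ref{lemma:iso_nonzerodivisors}): since $\hat\OO_{C,c} \cong \C\pp{x,y}/(h)$, some $x+\lambda y$ avoids all factors of $h$. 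You bypass the completion and the local models entirely by observing that $\OO_{C,p}$ is Cohen--Macaulay (a hypersurface in $\p^2$), so its associated primes are exactly the minimal ones; then for any $b$ the annihilator of $\bar b$ in the finite-length quotient $(\pi_\ast\OO_Y/\OO_C)_p$ is $\mathfrak m_p$-primary, hence escapes every minimal prime, hence by prime avoidance contains a non-zerodivisor $a$, which lets you clear denominators: $a\phi(b)=\phi(ab)=c\,\phi(1)=0$ with $ab=\iota(c)$, forcing $\phi(b)=0$. This is the same prime-avoidance fact the paper is using, but stated abstractly and applied directly to the stalk rather than to the completion, so your argument is shorter, needs no formal-functions input, and makes the role of Cohen--Macaulayness explicit (the paper uses it only implicitly through the local power-series descriptions). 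The one place you should be slightly careful is the sentence $\phi(ab)=(ab)\phi(1)$: as written $ab$ lives in $(\pi_\ast\OO_Y)_p$, so you should write $ab=\iota(c)$ and then $\phi(ab)=c\,\phi(1)$; this is clearly what you mean, and the rest goes through.
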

\begin{proof}
There is a natural morphism of sheaves $\Phi \colon \HHom_{\OO_C} \bigl( 
\pi_{\ast} \OO_Y, \OO_C \bigr) \longrightarrow \OO_C$ sending a 
homomorphism~$\varphi$ to~$\varphi(1)$. We prove that $\Phi$ is injective, this 
showing that $\HHom_{\OO_C} \bigl( \pi_{\ast} \OO_Y, \OO_C \bigr)$ is an ideal 
sheaf. To do so, it is enough to show that for every closed point $c \in C$, the 
induced map $\Phi_c \colon \HHom_{\OO_C} \bigl( \pi_{\ast} \OO_Y, \OO_C \bigr)_c 
\longrightarrow \OO_{C, c}$ on stalks is injective. In turn, we can pass to the 
completion, namely we can tensor by~$\hat{\OO}_{C, c}$, and prove injectivity in 
that case, since the completion of a local Noetherian ring is faithfully flat 
(see \cite[\href{http://stacks.math.columbia.edu/tag//00MC}{Tag/00MC}, Lemma 
10.96.3]{stacks-project}). Since the formation of~$\Hom$ commutes with flat base 
change (see \cite[\href{http://stacks.math.columbia.edu/tag//087R}{Tag/087R}, 
Remark 15.60.20]{stacks-project}), it suffices to prove that the map
\[
 \hat{\Phi}_c 
\colon \Hom_{\hat{\OO}_{C,c}} \bigl( (\pi_{\ast} \OO_Y)_c \otimes 
\hat{\OO}_{C,c}, \hat{\OO}_{C,c} \bigr) \longrightarrow \hat{\OO}_{C,c} 
\]
is injective. Notice that the $\hat{\OO}_{C,c}$-module $(\pi_{\ast} \OO_Y)_c 
\otimes \hat{\OO}_{C,c}$ is isomorphic to the direct sum $\displaystyle 
\bigoplus_{y_i \colon 
\pi(y_i) = c} \hat{\OO}_{Y, y_i}$. In fact, by the Theorem on Formal Functions 
(see \cite[Theorem III.11.1 and Remark III.11.1.2]{Hartshorne1977}) we have that
\[
 (\pi_{\ast} \OO_Y)_c \otimes \hat{\OO}_{C,c} 
 \; \cong \;
 \HH^0 \bigl( \hat{Y}, \OO_{\hat{Y}} \bigr) \, .
\]
where $(\hat{Y}, \OO_{\hat{Y}})$ is the \emph{completion of~$Y$ 
along~$\pi^{-1}(c)$} (see \cite[Definition III.9.3]{Hartshorne1977}). As a 
topological space, $\hat{Y}$ is just~$\pi^{-1}(c)$, so in our case it is a 
finite union of points (namely, the closed points $y_i \in Y$ such that 
$\pi(y_i) = c$), so the group of global sections of its structure sheaf is the 
direct sum of the groups of sections on each of these points. For any closed 
point $y_i \in \pi^{-1}(c)$, the group~$\HH^0 \bigl( y_i, \OO_{\hat{Y}} 
\bigr)$ is~$\hat{\OO}_{Y, y_i}$: in fact, by definition $\HH^0 \bigl( y_i, 
\OO_{\hat{Y}} \bigr)$ is the limit $\displaystyle \lim_{\leftarrow_n} \bigl( 
\OO_{Y, y_i} / J_i^n \cdot \OO_{Y, y_i} \bigr)$, where $J_i$ is the ideal of 
$\pi^{-1}(c)$ at~$y_i$. Since the radical of~$J_i$ is the maximal ideal 
of~$\OO_{Y, y_i}$, the two ideals define the same topology (see \cite[end of 
Section~III.2.5]{Bourbaki1972}), and so $\displaystyle \lim_{\leftarrow_n} 
\bigl( \OO_{Y, y_i} / J_i^n \cdot \OO_{Y, y_i} \bigr) = \hat{\OO}_{Y, y_i}$. 
Hence, we just need to prove that 
\[
 \Hom_{\hat{\OO}_{C,c}} 
 \left( 
  \bigoplus_{y_i \colon \pi(y_i) = c} \hat{\OO}_{Y, y_i}, \,
  \hat{\OO}_{C,c} 
 \right) \longrightarrow \hat{\OO}_{C,c}
\]
is injective for every closed point $c \in C$. Notice that for every closed 
point~$c$ such that $\pi_{|_{\pi^{-1}(c)}} \colon \pi^{-1}(c) \longrightarrow 
\{c\}$ is an isomorphism, there is nothing to prove. Hence, the only points we 
need to care about are the seven types of special points. The statement then 
follows from \Cref{lemma:total_fractions}, which describes a sufficient 
condition for injectivity, and \Cref{lemma:iso_nonzerodivisors}, which 
proves that this condition is met for each of the seven possible special points.
\end{proof}

\begin{lemma}
\label{lemma:total_fractions}
Let $E$ be a ring with total fraction ring $K$, namely $K$ is the localization
of $E$ at the set $R(E)$ of non-zerodivisors. Let $F$ be a subring of $K$
containing~$E$. 
Then the homomorphism of $E$-modules $\alpha \colon \Hom_E(F,E) \longrightarrow 
E$ sending~$\varphi$ to~$\varphi(1)$ is injective, and its image equals
the \emph{conductor ideal}
\[ 
 \{w \in E \, \colon \, wF \subset E \} \,.
\]
\end{lemma}
\begin{proof}
We first show that if $\varphi\in\Hom_E(F,E)$, then $\varphi(f) = 
\varphi(1)\cdot f$ for all $f\in F$. We localize $\varphi\colon F 
\longrightarrow E$ at $R(E)$ and obtain $\tilde{\varphi} \colon K 
\longrightarrow K$. The latter fulfills $\tilde{\varphi}(f) = 
\tilde{\varphi}(1)\cdot f$ because it is $K$-linear. Since $\varphi$ is the 
restriction of~$\tilde{\varphi}$ to~$F$, and both~$f$ and~$1$ are in~$F$, the 
claim is established.

If $\varphi(1)=0$, then by what we proved $\varphi=0$, and thus injectivity 
of~$\alpha$ is established.

Let $\varphi\in\Hom_E(F,E)$. For each $w\in F$, we have $\alpha(\varphi)\cdot 
w=\varphi(1) \cdot w=\varphi(w)\in E$, hence $\alpha(\varphi)$ is in the 
conductor ideal. Conversely, if $u$ is in the conductor ideal, then we define 
$\varphi \colon F \longrightarrow E$ as $w \mapsto u\cdot w$. Then 
$\alpha(\varphi)=u$. Hence the image of the map~$\alpha$ coincides with the 
conductor ideal.
\end{proof}

\begin{lemma}
\label{lemma:iso_nonzerodivisors}
 Let $c \in C$ be a closed point of~$C$ that is a special point for the fat
silhouette. Set $E = \hat{\OO}_{C, c}$ and $\displaystyle F = \bigoplus_{y_i 
\colon \pi(y_i) = c} \hat{\OO}_{Y, y_i}$. Then the homomorphism $E 
\longrightarrow F$ induced by the projection~$\pi$ is injective and becomes an isomorphism when 
we localize by the non-zerodivisors of~$E$.
\end{lemma}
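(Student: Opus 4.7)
The plan is a direct case-by-case verification through the seven types of special points enumerated at the end of Section~\ref{preliminaries}, organized around the following unifying principle. By Proposition~\ref{proposition:generic_iso}, the finite morphism $\pi \colon Y \to C$ is an isomorphism on the complement of the special points, which is a dense open subset of $C$. Passing to completions at a special point $c$, this means that the analytic branches of $C$ at $c$ correspond bijectively, via $\pi$, to the analytic branches of $Y$ at the preimages $y_i$. Equivalently, the minimal primes $\mathfrak{p}$ of $E = \hat{\OO}_{C,c}$ are matched via $\pi$ with the minimal primes $\mathfrak{q}$ occurring in the summands of $F = \bigoplus_{i}\hat{\OO}_{Y,y_i}$. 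If for each matched pair the induced map $E_{\mathfrak{p}} \to F \otimes_E E_{\mathfrak{p}}$ is an isomorphism, then, since $E$ is a one-dimensional local complete intersection and hence Cohen-Macaulay (so that $Q(E) = \prod_{\mathfrak{p}} E_{\mathfrak{p}}$), the map $Q(E) \to F \otimes_E Q(E)$ decomposes as a product of these local isomorphisms, giving the claim.

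For each of the seven types I will write down analytic-local equations for the surface $S$ at the preimages of $c$, with coordinates arranged so that the projection is along the $z$-axis, and then read off from the defining equation $\Phi$ of $S$ the local rings of $Y$ (defined by $\Phi$ and $\partial_z \Phi$), of $C$ (defined by the $z$-discriminant of $\Phi$), and the induced ring map $E \to F$. To illustrate, at a node of the proper silhouette~$B$ there are two smooth preimages $y_1, y_2 \in R$, giving $F \cong k[[t_1]] \oplus k[[t_2]]$ while $E \cong k[[u,v]]/(uv)$, and $E \to F$ is the classical nodal normalization $u \mapsto (0, t_2)$, $v \mapsto (t_1, 0)$, which becomes an isomorphism after inverting the non-zerodivisors of~$E$. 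At a cusp of~$B$, the local model yields $E \cong k[[u,v]]/(v^2 - u^3)$ and $F \cong k[[w]]$ with $u \mapsto w^2$, $v \mapsto w^3$, whose fraction fields coincide. At a pinch point, the equation $z^2 + u^2 v = 0$ established in the proof of Proposition~\ref{proposition:projection_good} gives $E = F = k[[u,v]]/(u^2 v)$ with $E \to F$ already the identity, before any localization. The remaining four cases (nodes and triple points of~$W$, and tangential and transversal intersections of~$B$ with~$W$) are handled by analogous explicit computations.

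The main obstacle is therefore purely one of careful bookkeeping: producing the correct analytic-local model at each of the seven special points, with attention to the non-reduced (``fat'') structure of $C$ wherever the discriminant acquires squared factors along~$W$, and handling the pinch point case where a single preimage accounts for the entire local picture of both $Y$ and $C$. Once the models are in hand, the matching of minimal primes and the verification of the isomorphism on each localization at such a prime are immediate.
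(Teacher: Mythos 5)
Your framework --- decomposing $Q(E) = \prod_{\mathfrak{p}} E_{\mathfrak{p}}$ over the minimal primes of the hypersurface germ $E$ (valid since $E$ is Cohen--Macaulay), matching analytic branches of $C$ and $Y$ under $\pi$, and verifying the isomorphism branch by branch --- is sound in principle, but it takes a genuinely different and considerably more laborious route than the paper. The paper's own proof is a short uniform argument that requires no case analysis at all: it observes that it suffices to find a \emph{single} non-zerodivisor $g$ in the maximal ideal of~$E$, since inverting such a $g$ already makes $E[g^{-1}] \to F[g^{-1}]$ an isomorphism by Proposition~\ref{proposition:generic_iso} (the projection is an isomorphism away from the special point), and $Q(E)$ is a further localization of $E[g^{-1}]$. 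Such a $g$ exists because $E \cong \C\pp{x,y}/(h)$ is a hypersurface germ, so zerodivisors of $E$ are exactly multiples of factors of $h$, and among the infinitely many linear forms $x + \lambda y$ a generic one is not a factor of~$h$. No explicit normal forms are needed.

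Your proof, by contrast, relies on having explicit analytic models for the fat contour and fat silhouette at each of the seven special-point types, but you only exhibit three of them (node of~$B$, cusp of~$B$, pinch point) before asserting the remaining four are ``handled by analogous explicit computations.'' This is a genuine gap. The four omitted cases --- nodes and triple points of~$W$, tangential and transversal intersections of~$B$ with~$W$ --- are not routine bookkeeping: determining the correct non-reduced local structure of the fat silhouette (for instance, that a node of~$W$ has local equation $x^2y^2 = 0$, or the scheme structure at a $D_4$ point or at a tangential intersection, where one must also argue that the intersection multiplicity constraints force a specific normal form) requires real justification, and the paper devotes a substantial portion of Appendix~\ref{conductor} to exactly these derivations, presenting in each case the quotient representation of the generator of~$F$ over~$E$ that your argument would need. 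If you completed your proof you would essentially be reproducing that part of the appendix. The trade-off is that the explicit models are needed later anyway (for the conductor ideals), but for the purposes of establishing this lemma alone, the paper's uniform argument is decisively shorter and avoids the case-by-case verification entirely.
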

\begin{proof}
 The statement follows from \Cref{proposition:generic_iso}. In fact, 
the statement holds if we prove that we obtain an isomorphism after localizing 
by a single non-zerodivisor. Geometrically the latter is true if and only if 
the projection~$\pi$ defines an isomorphism between the distinguished open set 
defined by the non-zerodivisor, and its preimage under~$\pi$. In view of 
\Cref{proposition:generic_iso}, it is then enough to show that for 
every special point $c \in C$ there is a non-zerodivisor in~$E$ vanishing on 
the special point. Since $E$ can be brought to the form $\C\pp{x,y}/(h)$ 
for a bivariate power series~$h$, it is enough to show that there always exists 
a non-zerodivisor in the ideal~$([x],[y])$ of~$E$. This is true since every 
zerodivisor of~$E$ correspond to a factor of~$h$, and since we have infinitely 
many different elements in~$([x],[y])$ of the form $[x + \lambda y]$ for 
$\lambda \in \C$, it is always possible to choose $\bar{\lambda}$ so that $x + 
\bar{\lambda} y$ is not a factor of~$h$.
\end{proof}

The proof of \Cref{proposition:ideal_sheaf} is complete and we know that 
the $\OO_C$-module $\HHom_{\OO_C} \bigl( \pi_{\ast} \OO_Y, \OO_C 
\bigr)$ is an ideal sheaf. Next, we compute the image of the map
\begin{equation}
\label{eq:conductor_ideal}
 \HHom_{\OO_C} \bigl( \pi_{\ast} \OO_Y, \OO_C \bigr)_c \otimes \hat{\OO}_{C,c} 
\longrightarrow \hat{\OO}_{C,c} \,, 
\end{equation}
namely the completions of the stalks of this ideal sheaf, 
for every special point~$c \in C$.

\Cref{conductor} explains how one can compute the image of the 
map~\eqref{eq:conductor_ideal} given a local equation of the surface~$S$ at a 
special point~$c$. In the next paragraph we clarify how we can compute, 
starting from these local data, the sections of a twist of the ideal 
sheaf~$\mscr{I}$ which is the image of $\HHom_{\OO_C} \bigl( \pi_{\ast} \OO_Y, 
\OO_C \bigr)$ in~$\OO_C$. From the discussion above, the global sections of 
$\mscr{I}(d^2 - 3d + 3)$ provide the map sending the fat silhouette~$C$ to the 
fat contour~$Y$.

Notice that, as we already proved in \Cref{reconstruction_smooth}, the 
global sections of~$\mscr{I}(d^2 - 3d + 3)$ are homogeneous polynomials of 
degree~$d^2 - 3d + 3$ satisfying particular properties. A homogeneous 
polynomial~$F$ of degree~$e$ is a global section of~$\mscr{I}(e)$ if and only 
if for any special point~$c$ the localization of~$F$ at~$c$ is in the 
stalk~$\mscr{I}_c$. The set of polynomials in~$\C[x,y,z]$ such that their 
localization at a point~$c$ belongs to~$\mscr{I}_c$ is a homogeneous ideal. The 
intersection of all these ideals provides the ideal defining~$\mscr{I}$.
Therefore, using the formulas provided in \Cref{conductor} we can 
compute all these ideals for every special point $c \in C$. 

The formula for the conductor ideal of a transversal intersection of~$B$ and~$W$
is not the same for the two possible types of these special points: if the 
transversal intersection is coming from a pinch point, then the conductor ideal 
is trivial, while if the intersection is the projection of of two distinct 
points, one in~$R$ and one in~$Z$, then the ideal is the sum of the square of 
the ideal of~$W$ and the ideal of~$B$ at the point. We could not find of a way 
to tell the two cases apart given only the equations of~$B$ and~$W$. It is of 
course possible to try out each of the finitely many cases, compute the result, 
and check it by comparing the discriminant with the given polynomial (in most 
cases, the computation will terminate with an error because the dimension of 
some vector space is not as expected). 

This concludes the explanation of the correctness of Algorithm 
\texttt{ReconstructGeneralSurface}.

\begin{algorithm}
\caption{\texttt{ReconstructGeneralSurface}}
\begin{algorithmic}[1]
  \Require A curve $C \subset \p^2$ with simple and double components.
  \Ensure A surface~$S \subset \p^3$ with ordinary singularities together with 
a good projection to~$\p^2$ such that $C$ is the fat silhouette of this 
projection, if such a surface exists; an error otherwise.
  \Statex
  \State {\bfseries Compute} the special points of the fat silhouette (see \Cref{remark:special_points}).
  \State {\bfseries Choose} a subset of the transversal intersections between 
proper silhouette and singular image to be considered as images pinch points.
  \For{each special point}
  \State {\bfseries Compute} the ideal whose localization at 
the special point coincides with the conductor ideal. Use the equivariant 
formulas given in \Cref{lemma:conductor_invariant} and subsequent 
discussion. 
  \State {\bfseries Homogenize} the ideal.
  \EndFor
  \State {\bfseries Intersect} all these ideals. Let $K$ be the result.
  \State {\bfseries Select} in $K$ a form $G_1$ of degree $d^2-3d+2$.
  \State {\bfseries Select} in $K$ a form $G_2$ of degree $d^2-3d+3$
which is not a multiple of~$G_1$.
  \State {\bfseries Compute} the ideal~$I$ of the image~$R$ of~$B$ under the
map \[ (x:y:z) \mapsto (x:y:z:G_2/G_1). \]
  \State {\bfseries Select} in $I$ a form~$H$ of degree~$d-1$.
  \State {\bfseries Select} in $I$ a form~$F$ whose derivative is a scalar
multiple of $H$.
  \State \Return $F$ if its discriminant is the fat silhouette; \texttt{fail} 
otherwise.
\end{algorithmic}
\end{algorithm}

\begin{remark}
\label{remark:special_points}
In our implementation, in order to determine the special points of the fat 
silhouette and to sort them by their type, we do as follows. We factor the 
equation of the fat silhouette as $U^2 V$, where $U$ is the equation of the 
singular image and $V$ is the equation of the proper silhouette. We 
then consider a general projection $\p^2 \dashrightarrow \p^1$ and we compute 
the discriminant of both~$U$ and~$V$ with respect to this projection and the 
resultant of~$U$ and~$V$ with respect to this projection. In this way, 
depending on the multiplicities of the corresponding factor in the 
discriminants or in the resultant, we are able to distinguish the various 
types of special points.
\end{remark}

To further comment Algorithm \texttt{ReconstructGeneralSurface} in 
\Cref{remark:uniqueness}, we introduce an equivalence relation between 
surfaces in~$\p^3$.

\begin{definition}
Let $S_1, S_2 \subset \p^3$ be two surfaces not passing through a 
point~$p$.
We say that $S_1$ is \emph{equivalent} to~$S_2$ if and only if there is a 
projective automorphism of~$\p^3$ that fixes all lines through~$p$ and that 
maps~$S_1$ to~$S_2$. Note that the equations of equivalent surfaces have the 
same discriminant with respect to~$w$, up to scaling. In other words, the 
surfaces~$S_1$ and~$S_2$ are equivalent over their silhouette.
\end{definition}

\begin{remark}
\label{remark:uniqueness}
For each choice of pinch points, the selection of the form~$G_1$ in Step~$8$ 
is unique up to scaling, the selection of the form~$G_2$ in Step~$9$ is unique 
up to scaling and up to multiples of~$G_1$, and the choice of~$H$ and~$F$ 
is unique up to scaling. This makes the result unique up to equivalence.

By trying all possible choices of pinch points, the algorithm can be used to 
compute all possible surfaces with ordinary singularities whose discriminant 
locus coincides with the given curve up to equivalence.
\end{remark}

\begin{remark} 
\label{remark:front_back_view}
One might believe that equivalent surfaces ``look the same'' to a camera 
positioned at the center of projection, meaning that they give the same 
structure of hidden parts of the silhouette. This is not so, because the hidden 
part structure depends on the relative position of camera, surface, and plane 
at infinity.

Let us assume that there exists a hyperplane~$H$ through~$p$ that does not 
intersect the real part of the surface~$S$. Take coordinates~$x$, $y$, $z$, 
and~$w$ in~$\p^3$ so that $H$ is the plane $\{ z = 0 \}$ and $p = (0:0:0:1)$. 
In this way, the real part of~$S$ is contained in the affine space where $z 
\neq 0$. In affine coordinates, the projection from~$p$ is then given by $(X, 
Y, W) \mapsto (X,Y)$, where $X,Y,W$ are the de-homogenized coordinates. Then, 
there are exactly two different ways of defining hidden parts on the real 
points of~$S$: given two points~$q_1, q_2$ with the same $X$ and $Y$ 
coordinates, one says that $q_1$ is hidden by~$q_2$ (respectively, $q_2$ is 
hidden by~$q_1$) if the $W$-coordinate of $q_1$ is bigger (respectively, 
smaller) than the one of~$q_2$. We call the two hidden part structures obtained 
in this way the \emph{front view} and the \emph{back view} of the surface.

In \Cref{figure:front_rear} we show the front and the back view of the 
same surface, which exhibit different hidden part structures.
\begin{figure}
 \centering
 \begin{tabular}{ccc}
  \includegraphics[width=.3\textwidth]{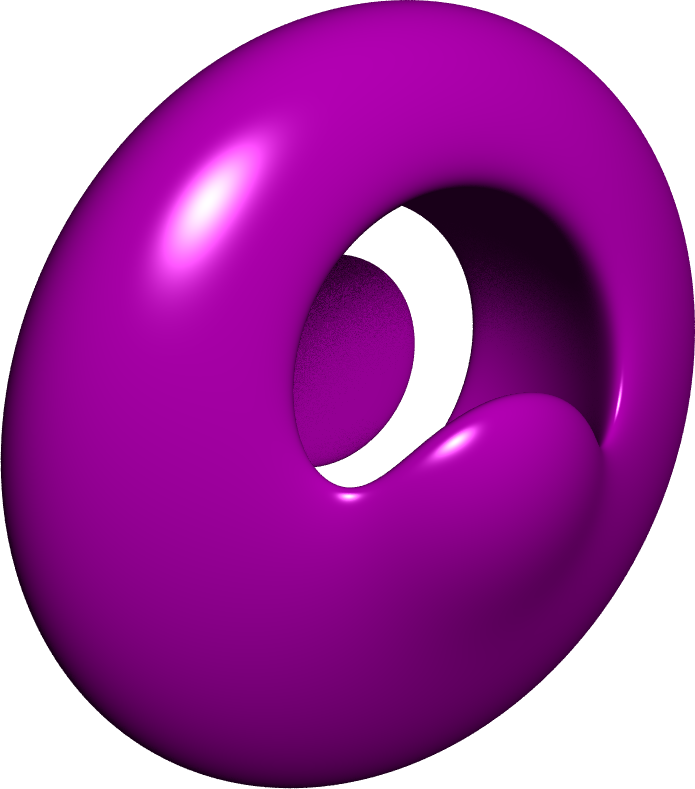} & &
  \includegraphics[width=.3\textwidth]{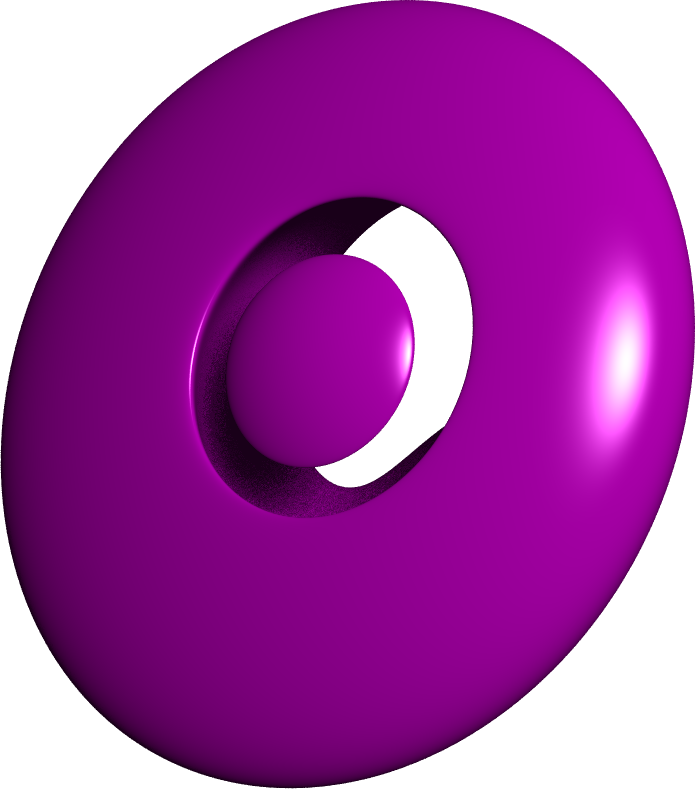}
 \end{tabular}
 \caption{The front and the back view of a quartic smooth surface.}
 \label{figure:front_rear}
\end{figure}
\end{remark}

We implemented the algorithm in Maple and tested it on a computer with an
Intel I7-5600 processor (1400 MHz). We report the timings in 
\Cref{table:timings}. The examples were surfaces of degree~$4$ and~$5$ 
with various types of singularities; the non-smooth cases are obtained by 
computing a random projection from a smooth model in a higher dimensional 
projective space.
The coefficients used in these random constructions were $5$ decimal digit
rational numbers chosen randomly. We projected the test surfaces to~$\p^2$
and used Algorithm \texttt{ReconstructGeneralSurface} to reconstruct them. Some 
of these test surfaces were ruled, and in this case we developed another 
algorithm --- which will be the subject of another paper --- that proves to be 
faster than the one presented here if we know a point on the proper silhouette.

As for the choice of the pinch points in Step~$2$, we took advantage of the fact
that our surfaces were defined over~$\Q$: we chose the conjugacy class of
points whose cardinality coincides with the known number of pinch points.

\begin{table}
 \caption{The table shows the degree of the surface~$S$, of the proper 
silhouette~$B$, and of the singular image~$W$; then the number of nodes and 
cusps of~$B$, the number of nodes and triple points of~$W$, the number of 
tangential intersections, pinch points, and other transversal intersection 
points, and the computing time in CPU seconds.}
\begin{tabular}{cccccccccccc}
 \toprule
$d$ & $B$ & $W$ & $n(B)$ & $c(B)$ & $n(W)$ & $t(W)$ & $t$ & $p$ & $o$ & time & type \\
\midrule
4 & 8 & 2 & 8 & 12 & 1 & 0 & 0 & 8 & 4 & 12s & ruled (elliptic base) \\
4 & 8 & 2 & 4 & 12 & 0 & 0 & 4 & 4 & 4 & 6s & Del Pezzo \\
4 & 6 & 3 & 4 & 6 & 1 & 0 & 2 & 4 & 6 & 4s & ruled \\
4 & 12 & 0 & 12 & 24 & 0 & 0 & 0 & 0 & 0 & 5s & smooth \\
4 & 6 & 3 & 0 & 9 & 0 & 1 & 6 & 6 & 3 & 3s & Veronese \\
5 & 20 & 0 & 60 & 60 & 0 & 0 & 0 & 0 & 0 & 180s & smooth \\
5 & 10 & 5 & 12 & 18 & 3 & 1 & 18 & 8 & 12 & 400s & Del Pezzo \\
5 & 8 & 6 & 12 & 9 & 6 & 1 & 12 & 6 & 15 & 130s & ruled \\
\bottomrule
\end{tabular}
 \label{table:timings}
\end{table}

\begin{example}[Quartic Del Pezzo surface] 
\label{example:del_pezzo}
A general projection of a smooth quartic Del Pezzo surface in~$\p^4$ to~$\p^3$ 
is a quartic surface~$S$. The singular curve~$Z$ of~$S$ is an irreducible conic. 

The proper silhouette is an octic curve. To produce a concrete example, we 
start with the surface~$S$ with the following equation:
\[ 
 (x^2+y^2-w^2)^2 - z^2(x^2-y^2)+z^4 = 0 \,. 
\]
The singular locus~$Z$ is given by the conic
\[
 z = x^2 + y^2 - w^2 = 0 \,.
\]
By projecting from the point $\left( \frac{2987}{918} : \frac{58}{33} : 
\frac{29}{6} : 1 \right)$, the singular locus is mapped isomorphically to the 
plane conic~$W$ with equation $x^2 + y^2 - w^2=0$. 
The proper silhouette~$B$ is an octic with 2 real components. 
The curves~$B$ and~$W$ intersect in 4 points tangentially and in 8 points 
transversally. Four of them are images of pinch points. We see only two of
the remaining $4$ because the other two are not real.

If we specify the correct pinch points, then the reconstruction algorithm 
returns the surface~$S$. If we specify the other four points as pinch points, 
then we obtain another surface~$S'$ that has only two real pinch points (see 
\Cref{figure:darboux}). Any other choice of pinch points does not give a 
surface.
\begin{figure}
 \centering
 \begin{tabular}{ccc}
  \includegraphics[width=.45\textwidth]{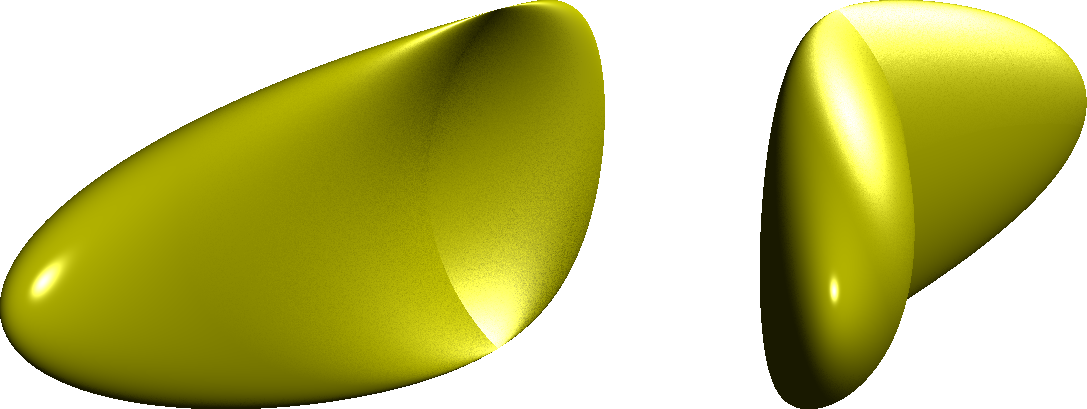} & \hspace{.5cm} & 
  \includegraphics[width=.45\textwidth]{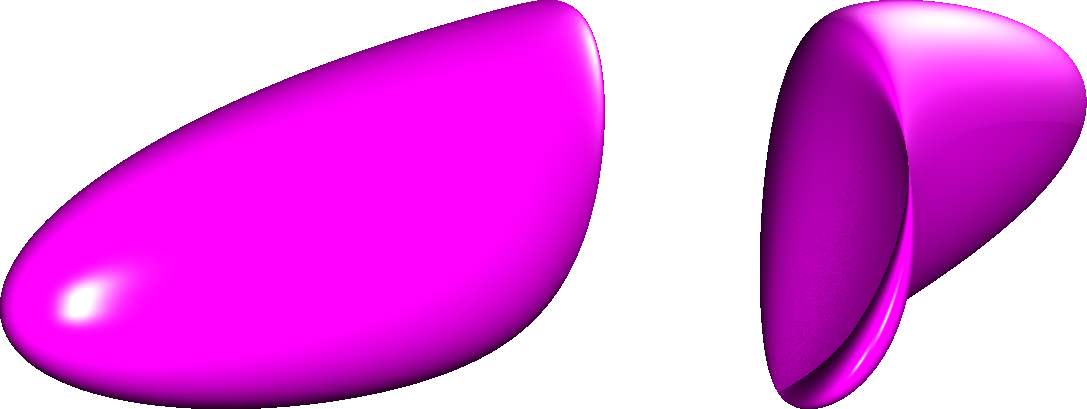} \\
  \includegraphics[width=.45\textwidth]{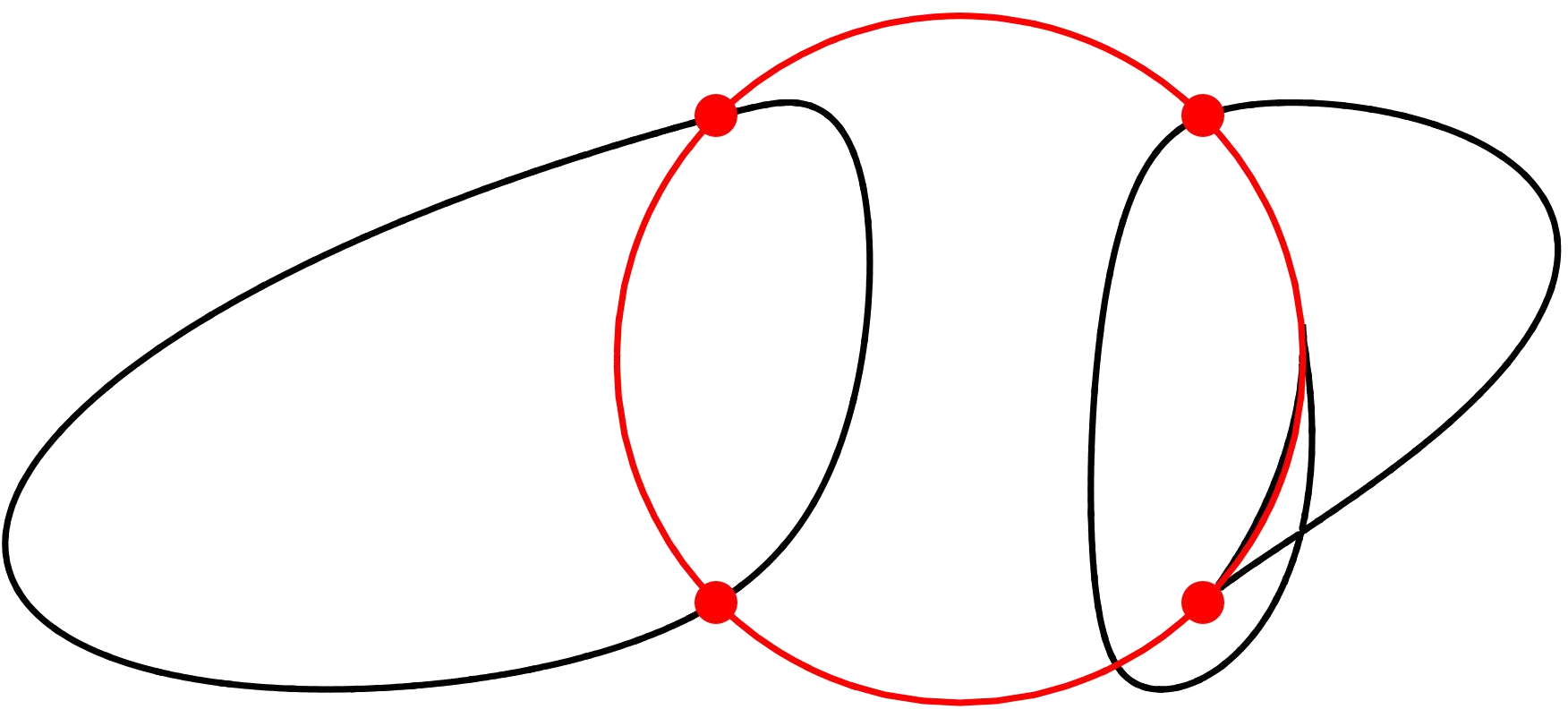} & & 
  \includegraphics[width=.45\textwidth]{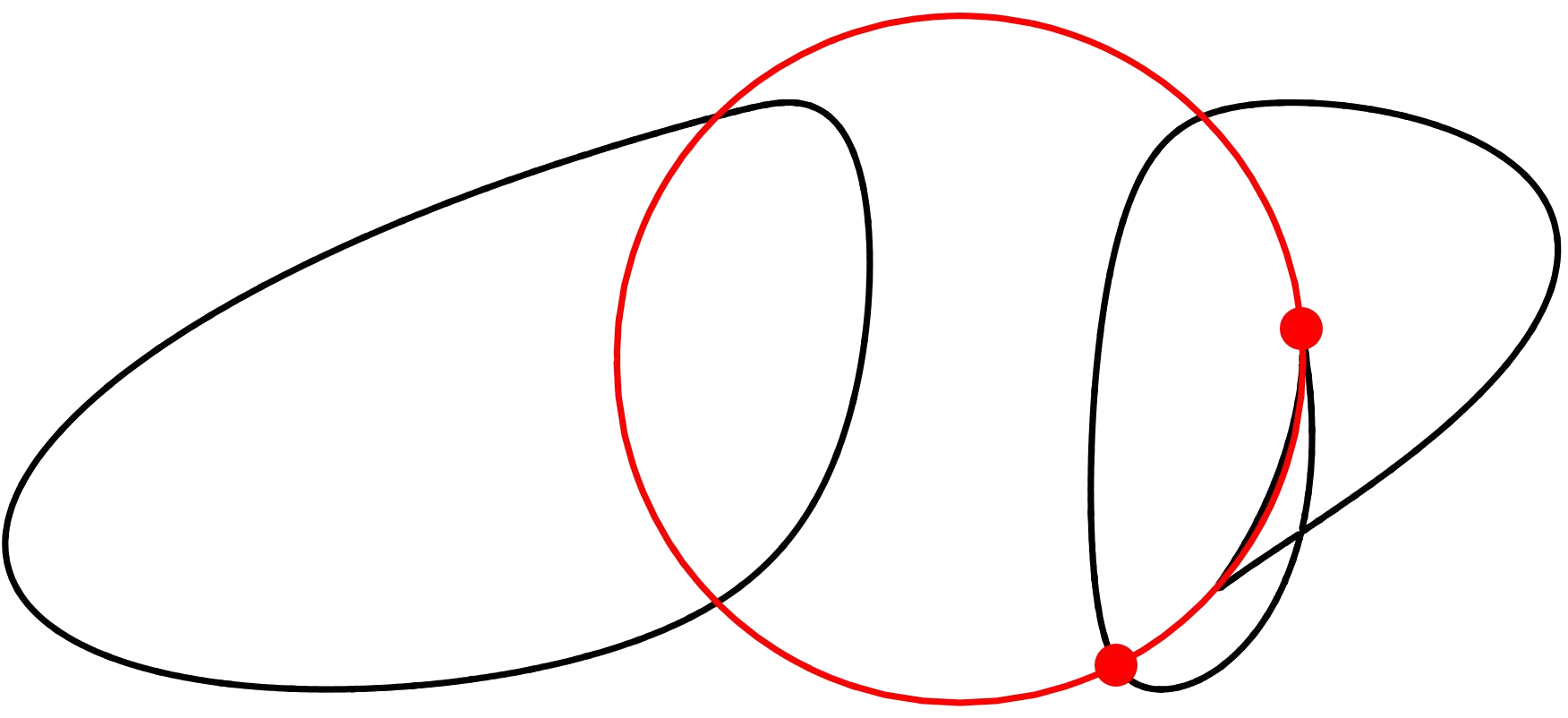} \\
  \includegraphics[width=.45\textwidth]{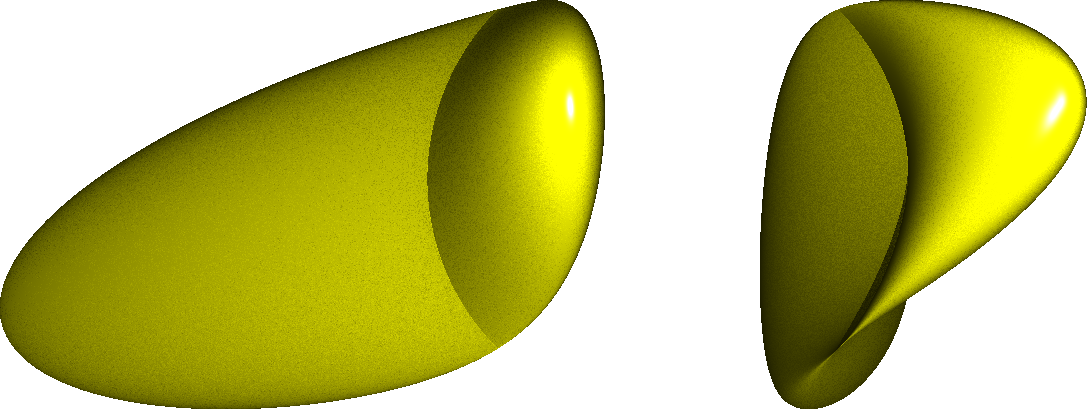} & & 
  \includegraphics[width=.45\textwidth]{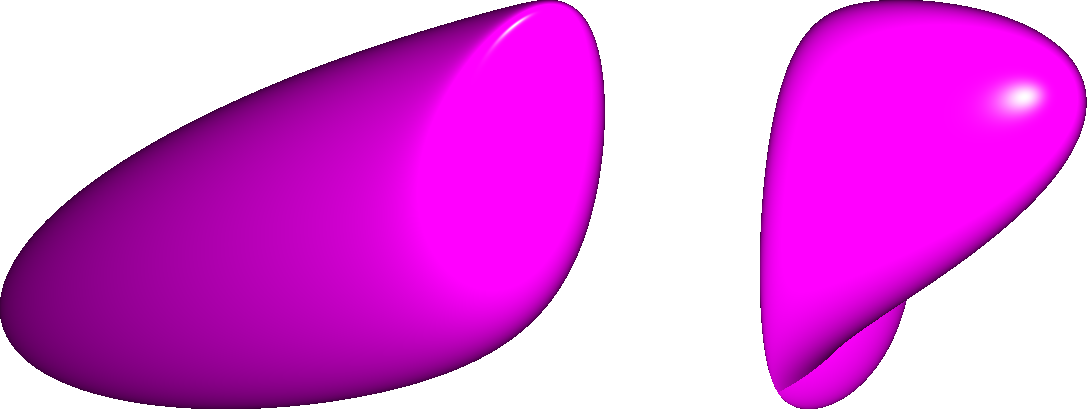} 
 \end{tabular}
 \caption{Two quartic surfaces with a singular conic with the same silhouette. 
The surface on the left, with front view, silhouette, and back view, has four 
real pinch points. The surface on the right has two real pinch points.}
 \label{figure:darboux}
\end{figure}
\end{example}

\begin{remark}
If a quadratic equation of the singular curve~$Z$ of a surface as in 
\Cref{example:del_pezzo} is positive definite, then by a change of 
coordinates we can suppose that $Z$ is given by
\[
 w = x^2 + y^2 + z^2 = 0 \, .
\]
Hence we see that if we fix a positive definite quadratic equation of~$Z$, we 
get a scalar product in the affine space~$\A^3$ obtained by removing the plane 
carrying~$Z$. This gives the space~$\A^3$ the structure of a Euclidean space; 
the conic~$Z$ is the \emph{absolute conic} with respect to this structure (by 
definition, this is a conic without real points in the plane at infinity). The 
reason for this setup is that, despite $Z$ has no real points, it can still be 
``seen'' in a photographic image obtained by central projection from a point $p 
\in \A^3$. The trick is to use a \emph{calibrated camera} (see 
\cite[Section~1.1]{Hartley2004}): if we mark the footpoint~$q$ of~$p$ on
the image plane and the intersection of this plane with a right circular cone 
with vertex~$p$ and axis through~$q$ and angle~$\frac{\pi}{4}$ (any other fixed 
angle would equally work), then all viewing angles~$\sphericalangle(q_1,p,q_2)$ 
for $q_1,q_2$ in the image plane can be computed by simple trigonometry. Hence 
the image plane is an elliptic plane, which means that we prescribe on it a 
conic without real points; in this case, this conic is the image of~$Z$ under 
the projection.

In this case, the two surfaces~$S_1$ and~$S_2$ that are obtained by reconstruction
are related by a spherical inversion with midpoint at the center of the 
projection. The reason for that is that the inversion of a quartic surface with 
the absolute conic as double curve is again a quartic surface with the absolute 
conic as double curve.
\end{remark}

\begin{example}[Veronese surface] 
\label{example:veronese}
The general projection of a Veronese surface is a quartic surface~$S$ with three 
singular lines $Z_1,Z_2,Z_3$ meeting in a triple point. Such a surface is called 
a \emph{Roman} or \emph{Steiner surface}, and is projectively equivalent to the
surface of equation\footnote{To obtain the isomorphism, move the three singular 
lines to the three axes; the ideal having the axis as double lines is generated 
by~$x^2y^2$, $x^2z^2$, $y^2z^2$ and~$xyz$; imposing that the surface has a 
triple point at the origin leads to the equation.}
\[ 
 x^2y^2 + x^2z^2 + y^2z^2 + xyzw = 0 \,.
\]
In this example, the three singular lines are the coordinate axes through the 
point~$(0\colon0\colon0\colon1)$. Each line contains two pinch points. The silhouette consists 
of three lines $W_1,W_2,W_3$ (the singular image) and a sextic~$B$ with $9$ 
cusps (the proper silhouette).
Each line~$W_i$, for $i=1,2,3$, is tangent to~$B$ at one point and 
intersects~$B$ transversally in~$4$ points. In order to recover the surface 
from the silhouette, we need to choose which are the projections of the $2$ 
pinch points on a line $Z_i$ among the four points of intersection 
between~$W_i$ and~$B$. There are $216$ possible cases.
\begin{figure}
 \centering
 \begin{tabular}{ccc}
  \includegraphics[width=.22\textwidth]{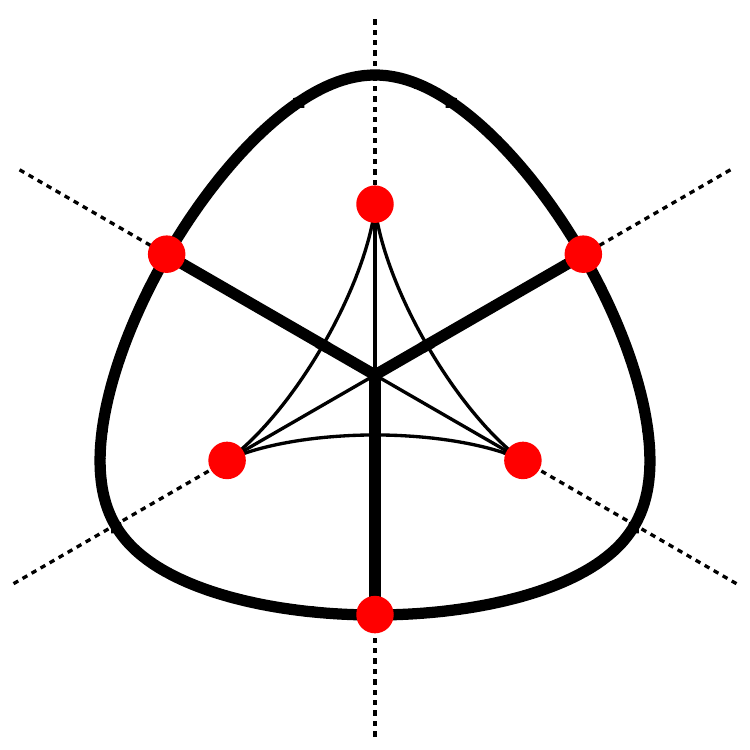} & \hspace{1cm} &
  \includegraphics[width=.22\textwidth]{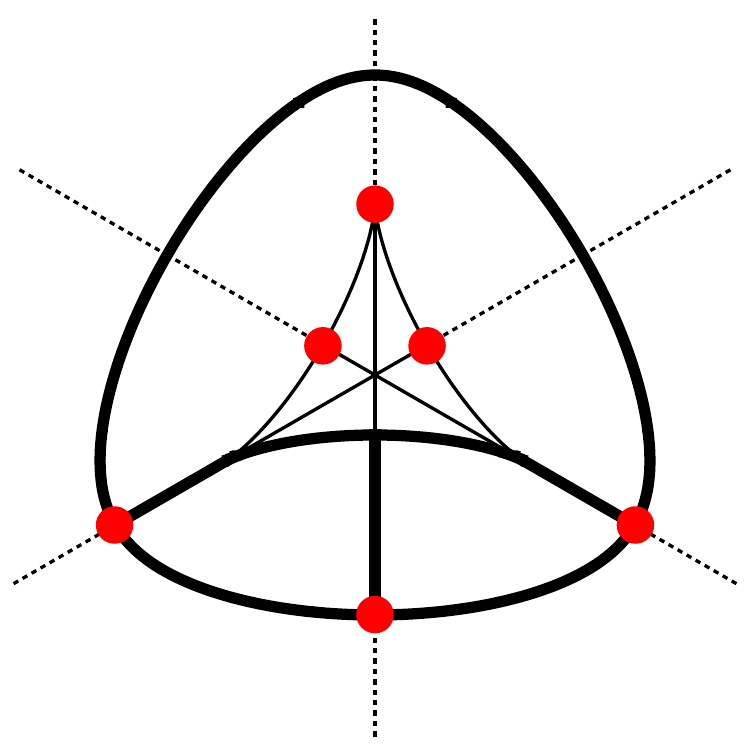} \\
  \includegraphics[width=.22\textwidth]{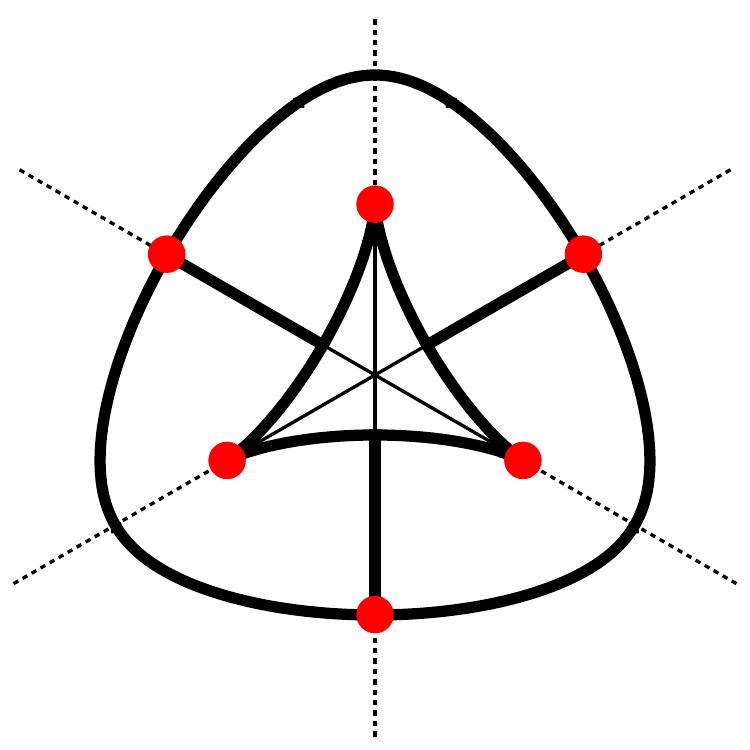} &&
  \includegraphics[width=.22\textwidth]{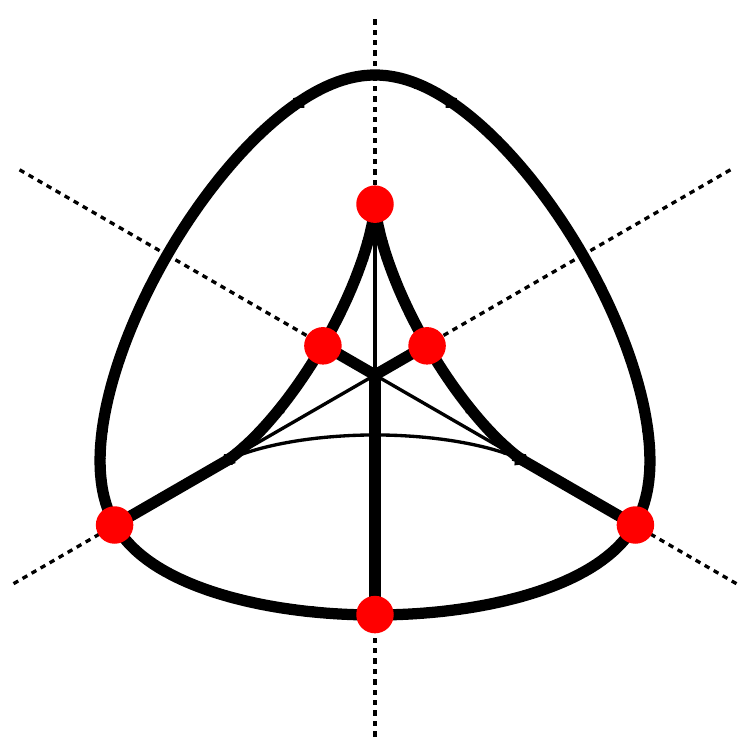} \\ 
  \hline
  \includegraphics[width=.22\textwidth]{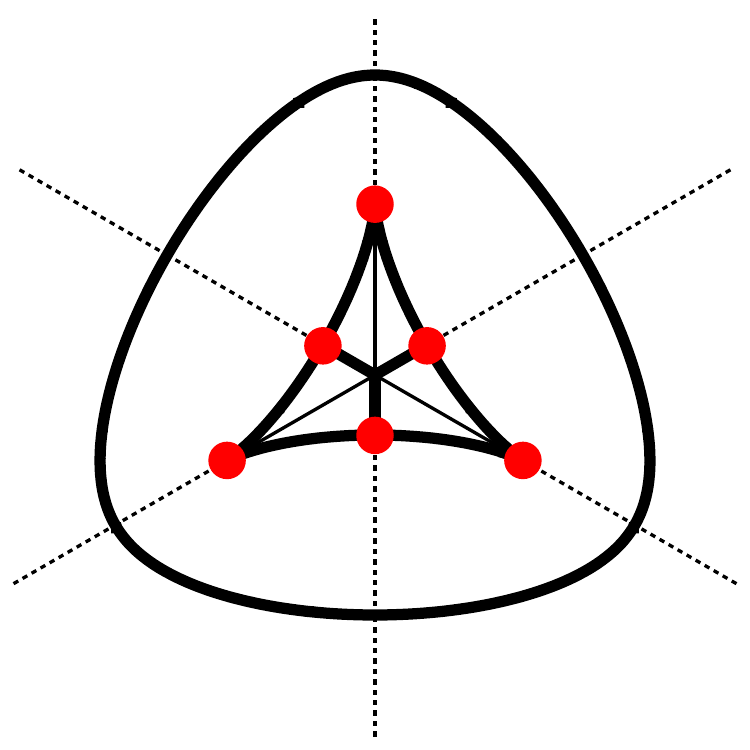} &&
  \includegraphics[width=.22\textwidth]{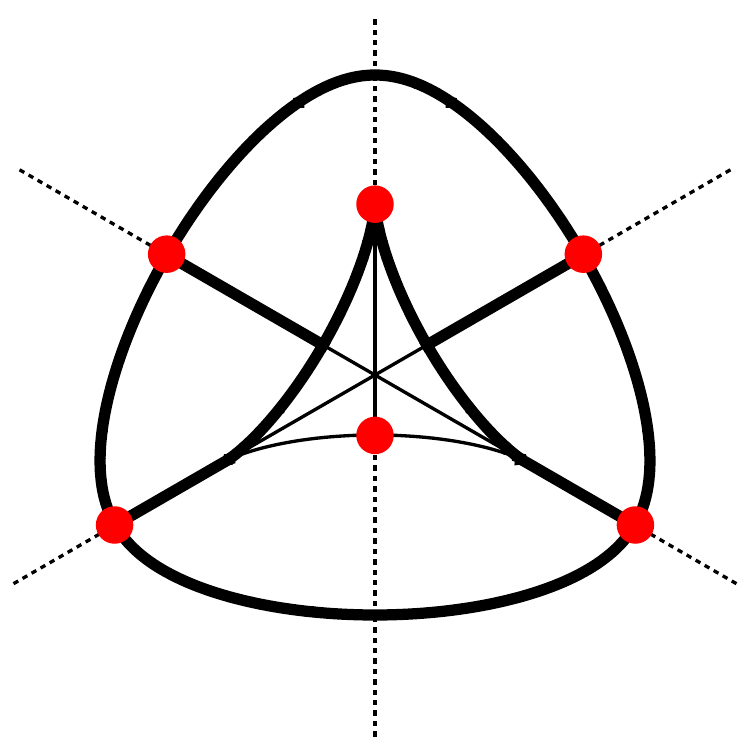} \\
  \includegraphics[width=.22\textwidth]{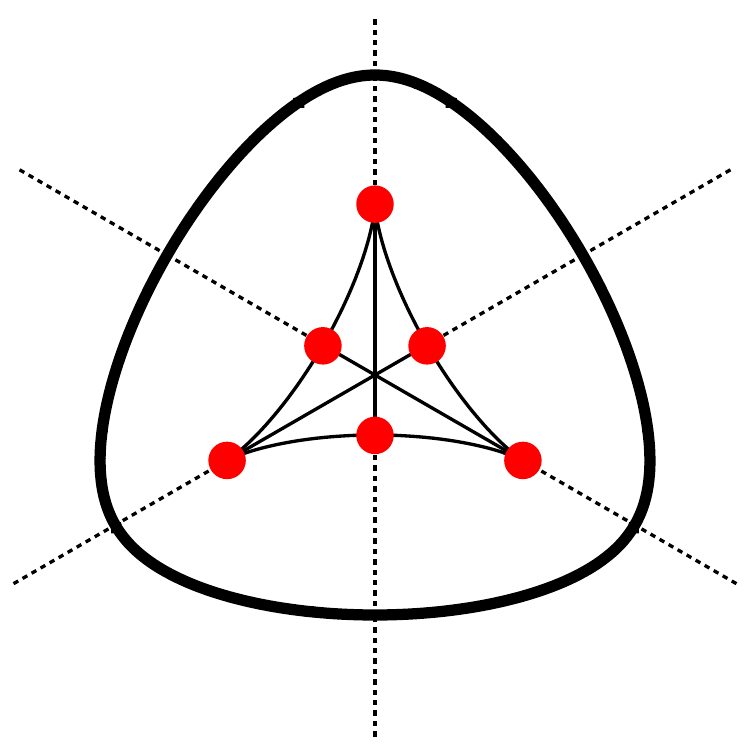} &&
  \includegraphics[width=.22\textwidth]{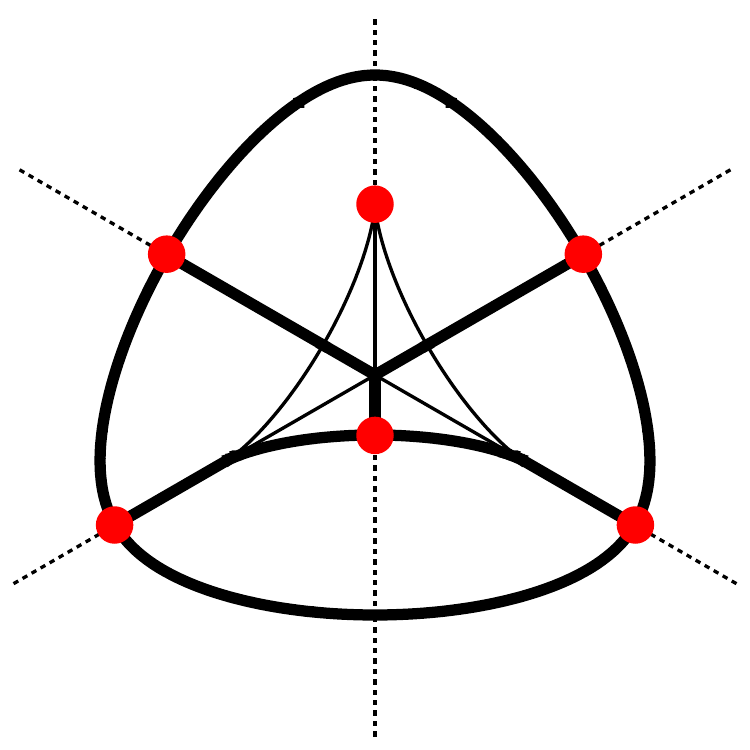} \\ 
  \hline
  \includegraphics[width=.22\textwidth]{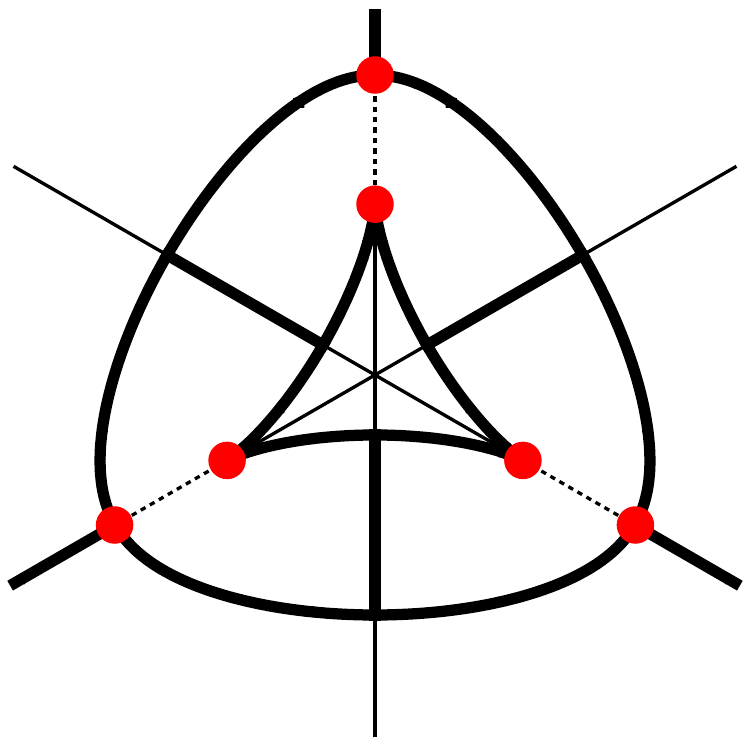} &&
  \includegraphics[width=.22\textwidth]{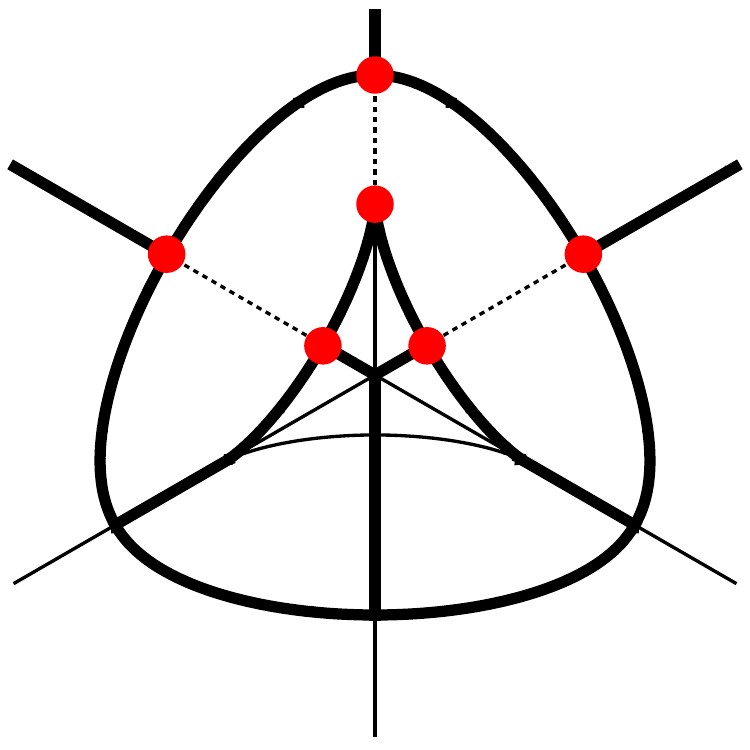} \\
  \includegraphics[width=.22\textwidth]{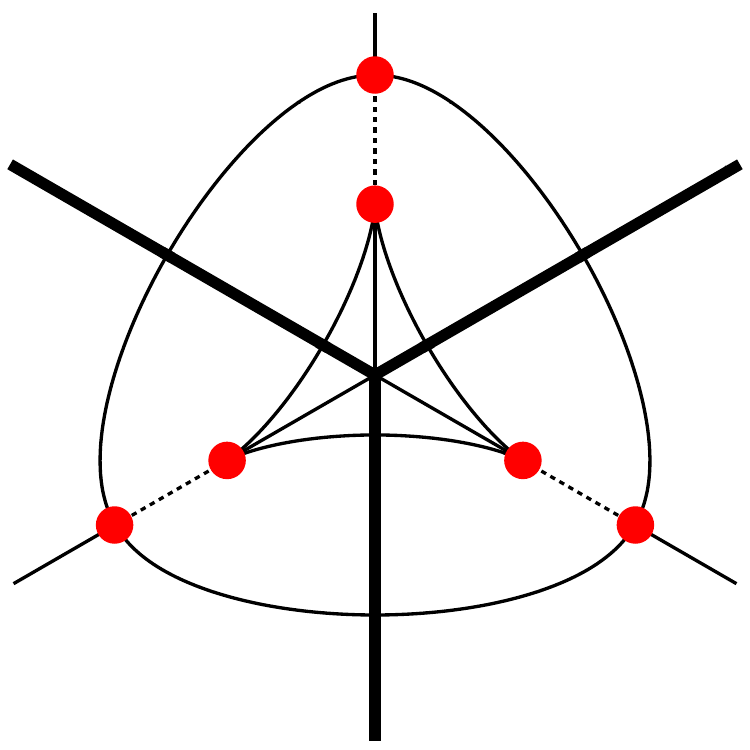} &&
  \includegraphics[width=.22\textwidth]{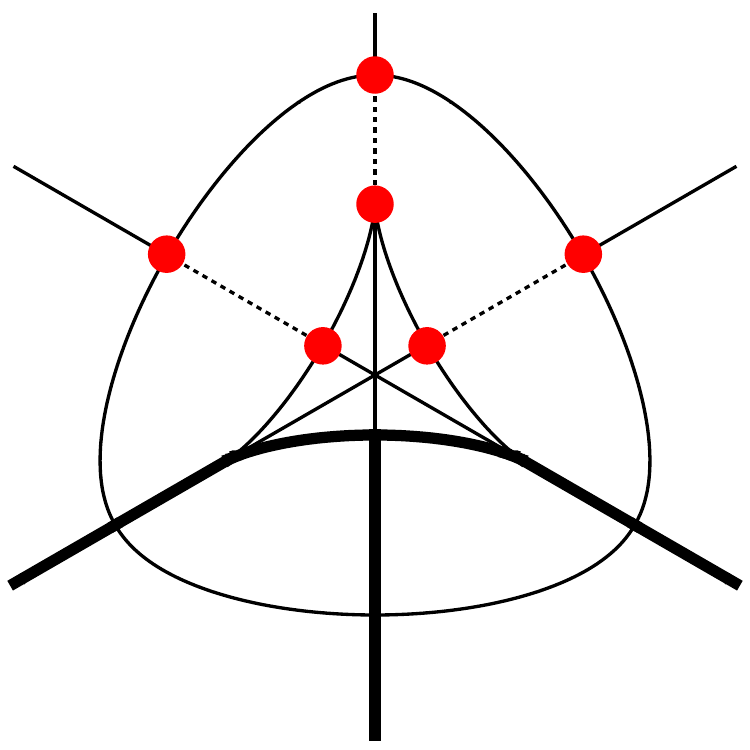}
 \end{tabular}
 \caption{These diagrams show the hidden parts and isolated lines of six 
non-equivalent Roman surfaces projecting to the same silhouette, front and back 
view. Six others can be obtained by rotating the three surfaces on the 
right by $120^{\circ}$ and $240^{\circ}$. Diagrams 
in the same double row are obtained by factorizing the same projection from the 
Veronese surface to~$\p^2$.}
 \label{figure:veronese}
\end{figure}
\begin{figure}
 \centering
 \begin{tabular}{ccc}
  \includegraphics[width=.22\textwidth]{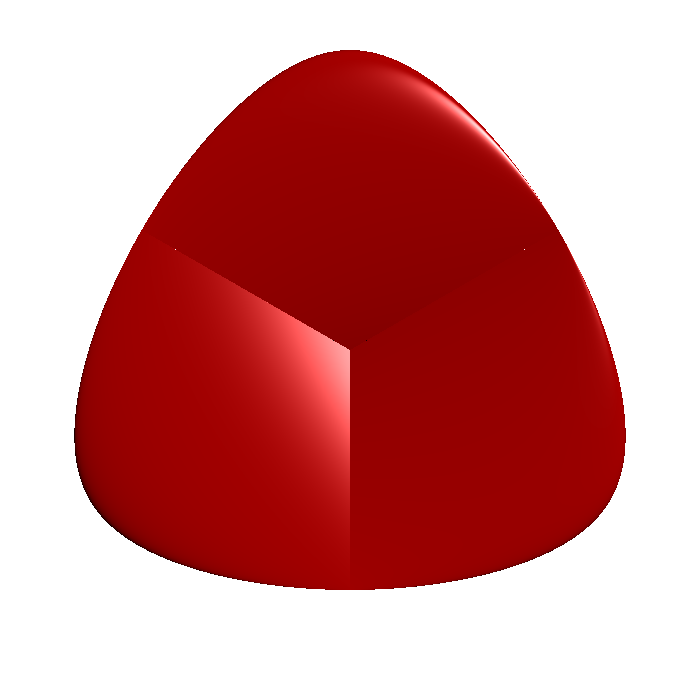} & \hspace{1cm} & 
  \includegraphics[width=.22\textwidth]{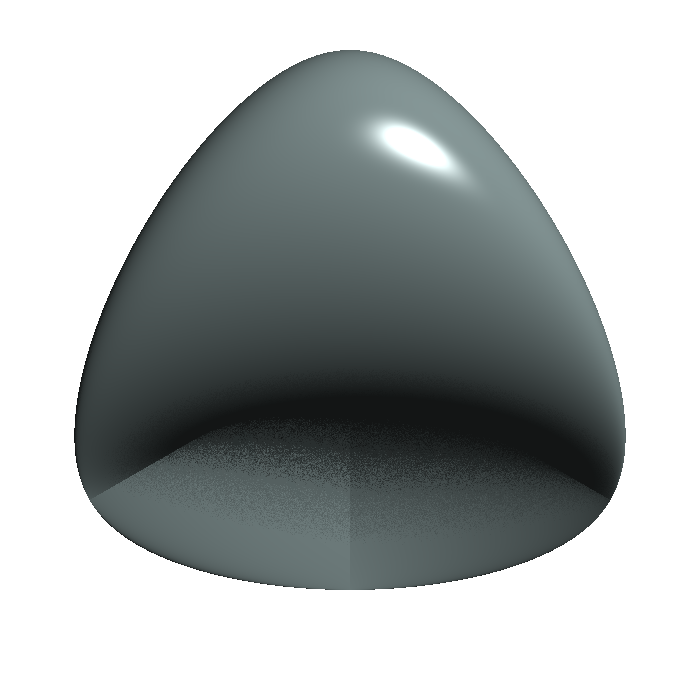} \\
  \includegraphics[width=.22\textwidth]{pictures/pb1.png} && 
  \includegraphics[width=.22\textwidth]{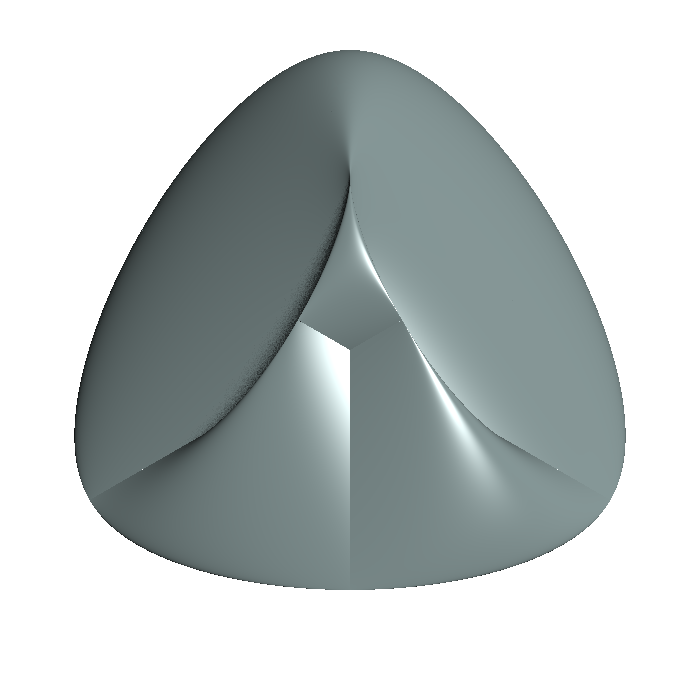} \\
  \includegraphics[width=.22\textwidth]{pictures/pf3.png} & \hspace{1cm} & 
  \includegraphics[width=.22\textwidth]{pictures/pf4.png} \\
  \includegraphics[width=.22\textwidth]{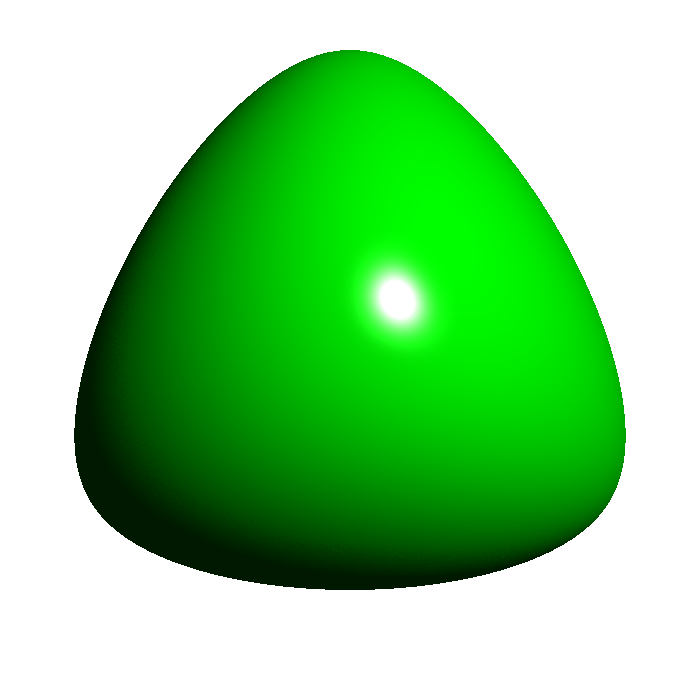} && 
  \includegraphics[width=.22\textwidth]{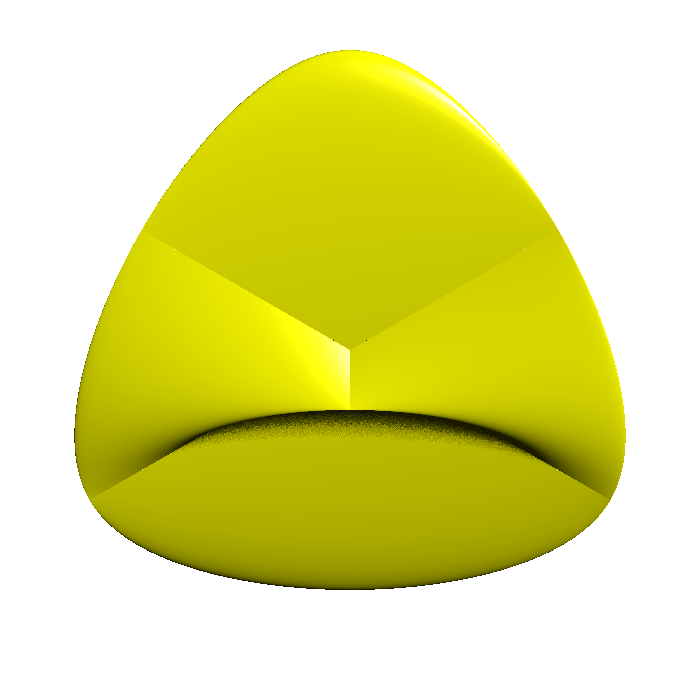} \\
  \includegraphics[width=.22\textwidth]{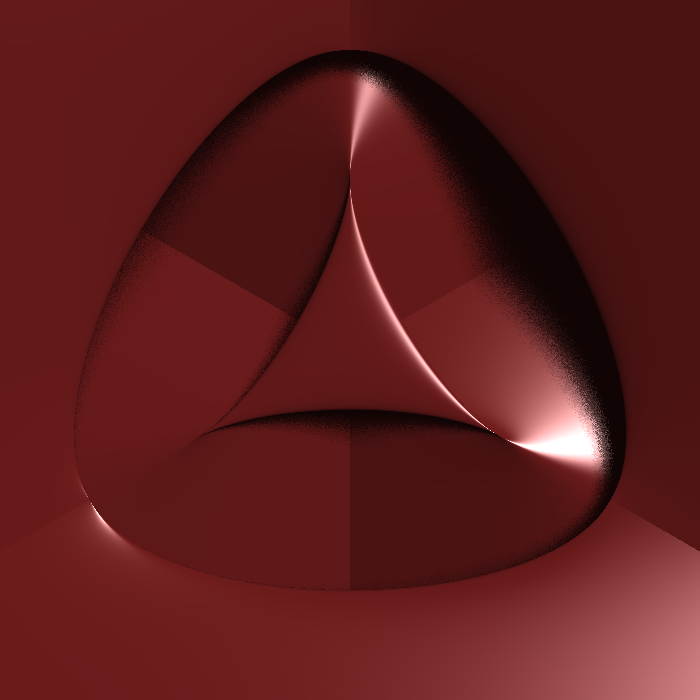} & \hspace{1cm} & 
  \includegraphics[width=.22\textwidth]{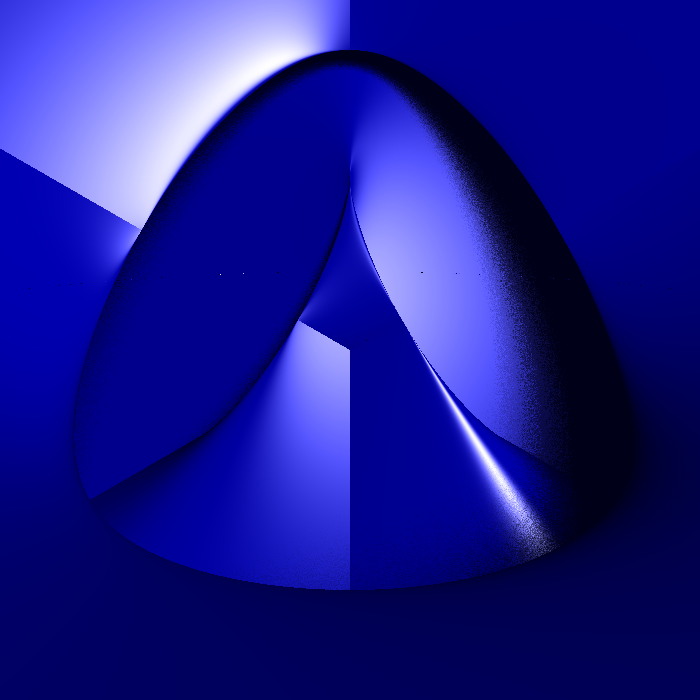} \\
  \includegraphics[width=.22\textwidth]{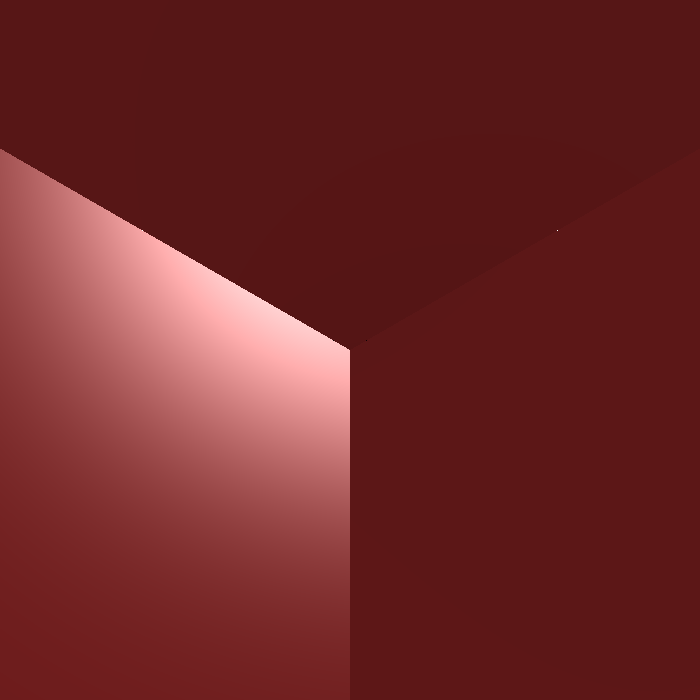} && 
  \includegraphics[width=.22\textwidth]{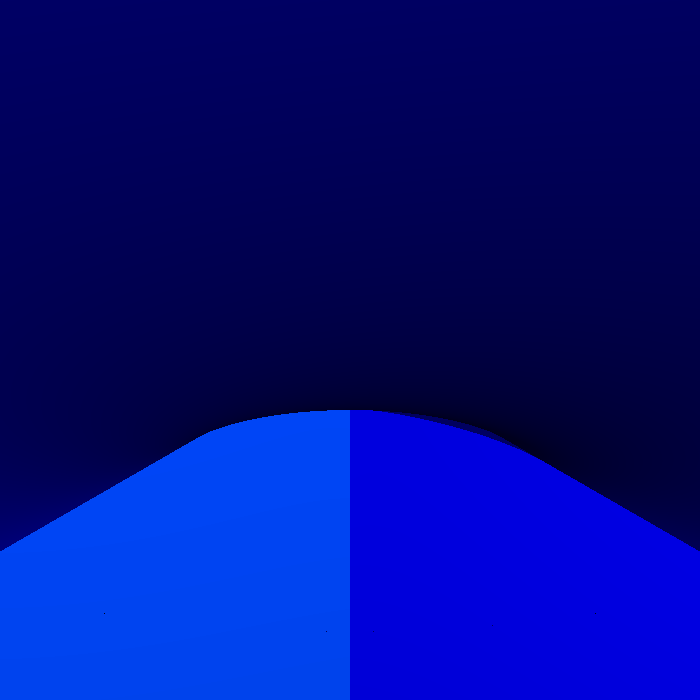} 
 \end{tabular}
 \caption{Here are six non-equivalent Roman surfaces with the same silhouette, 
front and back view, corresponding to the diagrams in 
\Cref{figure:veronese}.}
 \label{figure:veronese3D}
\end{figure}

The computation using our algorithm shows that $204$ choices lead to an error 
message, while $12$ choices lead to a Roman surface. Let us say that two such 
surfaces~$S_1$ and~$S_2$, both coming with a projection $f_i \colon S_i 
\longrightarrow \p^2$, are \emph{Veronese-equivalent} if there is a Veronese 
surface $V \in \p^5$ and projection maps $g_i \colon V \longrightarrow S_i$ such 
that $g_1 \circ f_1 = g_2 \circ f_2$. Then the 12 Roman surfaces are partitioned 
into three Veronese-equivalence classes, each consisting of four surfaces. The 
fact that there are three different ways to project a Veronese surface 
to~$\p^2$ for a fixed branching curve~$B$ has been found by Catanese, see 
\cite[Proposition 3.11]{Catanese1986}, improving an example of Chisini. The four 
different choices of factoring each of these three maps through a Roman surface 
are explained by the fact that that the preimage of the intersection point of 
the three lines $W_1,W_2,W_3$ consists of $4$ points in the Veronese surface, 
and three of them are mapped to the triple point of the Roman surface: there 
are four ways to choose a triple out of four points.

In \Cref{figure:veronese3D}, we show $6$ non-equivalent Roman surfaces 
with the same silhouette. They are divided in three groups, giving the three 
Veronese-equivalence classes. The diagrams in \Cref{figure:veronese} 
displays which parts of the silhouette are visible and which are hidden, and 
also which parts of the singular line are self-intersections and which are 
isolated lines. We see that for each Veronese-equivalence class we have an 
example where the visible/hidden structure is invariant under rotations 
by~$\pi/3$, and another which is not. By applying rotations to the 
non-invariant example, we get two more non-equivalent surfaces that are in the 
same Veronese-equivalence class. In this way we get all the $12$ 
non-equivalent Roman surfaces.

For four surfaces in \Cref{figure:veronese3D}, it is possible to find a 
hyperplane not intersecting the Roman surface, so we can display front and back 
view (see \Cref{remark:front_back_view}). For the remaining two, we 
choose two hyperplanes at infinity that do not separate special points in order 
to produce front and back view.
\end{example}

\appendix

\section{Computation of conductor ideals}
\label{conductor}

The aim of this appendix is to explain how to compute the image of the map 
\[
 \HHom_{\OO_C} \bigl( \pi_{\ast} \OO_Y, \OO_C \bigr)_c \otimes \hat{\OO}_{C,c} 
\longrightarrow \hat{\OO}_{C,c} \,, 
\]
namely the conductor ideal, when $c \in C$ is a special point of the 
silhouette. We proceed by first determining normal forms for the projection 
around the special points, then computing the conductor ideals in those 
particular situations, and eventually finding equivariant formulas for these 
ideals that can hence be used without reducing the situation to normal forms.

We start by providing normal forms for each of the seven cases of singularities 
of the silhouette. Recall the notation from \Cref{lemma:iso_nonzerodivisors}:
\[
 E = \hat{\OO}_{C, c} 
 \quad \text{and} \quad
 F = \bigoplus_{y_i \colon \pi(y_i) = c} \hat{\OO}_{Y, y_i} \,.
\]
In each case we express the generators of~$F$ as quotients of 
elements in~$E$, as predicted by \Cref{lemma:iso_nonzerodivisors}.
\begin{itemize}[leftmargin=*]
 \item[$\cdot$] \emph{Nodes of the proper silhouette}. It is well-known that 
nodes are $A_1$ singularities, so they are analytically isomorphic to $\{ (x,y) 
\in \C^2 \, \colon \, xy = 0\}$. Since the projection~$\pi$ is an isomorphism 
away from the node, then the preimage of an analytic neighborhood of the 
node~$c$ is constituted of two irreducible smooth curves, each of them 
isomorphic to the two components of $\{ xy = 0 \}$. Hence they are analytically 
equivalent to two disjoint lines, and so we can suppose that
\[
 E = \frac{\C\pp{x,y}}{(xy)}
 \quad \text{and} \quad
 F = \frac{\C\pp{x,y,z}}{\bigl( z(1 - z), xz, y(1-z) \bigr)} \, ,
\] 
and the map $E \longrightarrow F$ is the natural inclusion sending the classes 
of~$x$ and~$y$ in~$E$ to the classes of~$x$ and~$y$ in~$F$. Since $F$ is 
generated, as an $E$-module, by the classes of~$1$ and~$z$, it is enough to 
show that $[z]$ can be expressed as a quotient of two elements $p,q \in E$, 
where $q$ is a non-zerodivisor. We have
\[
 [z] = \frac{[z][x+y]}{[x+y]} = \frac{[y]}{[x+y]}
\]
and $[x+y]$ is not a zerodivisor in~$E$.
 \item[$\cdot$] \emph{Cusps of the proper silhouette}. It is well-known that 
ordinary cusps are $A_2$ singularities, so they are analytically isomorphic to 
$\{ (x,y) \in \C^2 \, \colon \, x^3 - y^2 = 0\}$. The preimage under the 
projection~$\pi$ of an analytic neighborhood of a cusp is a resolution of the 
cusp, so we can suppose 
\[
 E = \frac{\C\pp{x,y}}{(x^3 - y^2)}
 \quad \text{and} \quad
 F = \frac{\C\pp{x,y,z}}{(x - z^2, y - z^3, x^3 - y^2)} \, .
\]
Again, it is enough to express $[z]$ as the quotient of two elements in~$E$, 
and indeed we have $[z] = [y] / [x]$. 
 \item[$\cdot$] \emph{Nodes of the singular image}. This case is similar to the 
one of the node of the proper silhouette, but we have to take into account 
that the fat silhouette has a non-reduced structure. The radical of the 
analytic ideal of node can be hence supposed to be~$(xy)$, so the ideal is of 
the form $(x^a y^b)$. As we saw at the end of the proof of 
\Cref{proposition:generic_iso}, we have $a = b = 2$. So 
\[
 E = \frac{\C\pp{x,y}}{(x^2 y^2)}
 \quad \text{and} \quad
 F = \frac{\C\pp{x,y,z}}{\bigl( z(1 - z), x^2z, y^2(1-z) \bigr)} \, .
\]
We conclude as in the case of the nodes of the proper contour. 
 \item[$\cdot$] \emph{Triple points of the singular image}. A triple point 
of the singular image is the projection of a triple point of the surface. Such 
a point is analytically at the intersection of three smooth manifolds, each of 
which projects isomorphically to the plane. Hence, these manifolds are graphs 
of functions, so they are analytically equivalent to $\{ (x,y,z) \in \C^3 \, 
\colon \, z - f_i(x,y) = 0\}$ for $i \in \{1,2,3\}$ and $f_i$ are analytic 
functions vanishing at~$(0,0)$. By an analytic change of coordinates fixing 
the $(x,y)$-coordinates, we can assume $f_1 = 0$. The projection of the 
singular curve in the plane is the product $f_2 f_3 (f_2 - f_3)$. In the 
plane we have an ordinary triple point, so the tangents at~$(0,0)$ to $\{ f_2 = 
0\}$ and $\{f_3 = 0\}$ are distinct, hence by the inverse function theorem we 
can suppose that $f_2 = x$ and $f_3 = y$. Therefore, we have
\[
 E = \frac{\C\pp{x,y}}{\bigl( x^2y^2(x - y)^2\bigr)}
 \quad \text{and} \quad
 F = \frac{\C\pp{x,y,z}}{\bigl( z(z-x)(z-y), 3z^2 - 2z(x+y) + xy \bigr)} \, ,
\]
where the exponents are justified as in the previous case. 
In this case, $F$ is generated over~$E$ by~$[1]$, $[z]$ and~$[z^2]$. The 
equation $3z^2 - 2z(x+y) + xy = 0$ provides a linear dependence over~$E$ 
between~$[z]$ and~$[z^2]$ that is monic in~$[z^2]$, so it is enough to show 
that $[z]$ can be expressed as a quotients of elements in~$E$. Taking division 
with remainder of~$z(z-x)(z-y)$ by~$3z^2 - 2z(x+y) + xy$ as polynomials 
in~$z$, we get
\[
 [z] = \frac{[x][y][x+y]}{2[x^2] + 2[y^2] - 2[xy]} \, .
\]
 \item[$\cdot$] \emph{Transverse intersections of proper silhouette and 
singular image whose preimages are two distinct points}. Here we have
\[
 E = \frac{\C\pp{x,y}}{( x y^2)}
 \quad \text{and} \quad
 F = \frac{\C\pp{x,y,z}}{\bigl( z(1 - z), xz, y^2(1-z) \bigr)} \,, 
\]
and so $[z] = [y^2] / ([x] + [y^2])$.
 \item[$\cdot$] \emph{Transverse intersections of proper contour and singular 
images whose preimages are pinch points}. We prove that the projection~$\pi$ is 
an isomorphism in this situation, so we have $E = F$. Recall from 
\Cref{proposition:projection_good} that we can assume that the local 
equation of the surface at a pinch point is $x^2y - z^2 = 0$ while keeping the 
projection along the $z$-axis. Its derivative with respect to~$z$ 
is~$2z$, so the fat contour is the plane curve~$x^2y$ inside the plane $z = 0$, 
thus the fat contour projects isomorphically to the fat silhouette. 
\item[$\cdot$] \emph{Tangential intersections of proper silhouette and singular 
image}. Locally, the singular curve is the intersection of two smooth 
components~$S_1$ and~$S_2$ of the surface~$S$, and one of the two, say~$S_1$, 
contains the proper contour. The restriction of the projection to~$S_1$ is a 
$2 \colon 1$ covering branched along a smooth curve; we can choose analytic 
coordinates such that the equation of~$S_1$ is $z^2-y = 0$, the proper contour 
is $y=z=0$, and the proper silhouette is $y=0$. The second component~$S_2$ 
projects isomorphically to the $xy$-plane, hence it has a local analytic 
equation of the form~$z-f$, where $f$ is a function of~$x$ and~$y$. The two 
components of the silhouette are hence $y=0$ and $f^2-y=0$, obtained by 
eliminating~$z$ from the previous equations. We know that the 
intersection multiplicity is~$2$. This implies that the gradient of~$f$ is 
independent from~$y$. Hence we can choose $f=x$ as the third coordinate. In 
this coordinate system, we get
\[
 E = \frac{\C\pp{x,y}}{\bigl( y(x^2-y)^2\bigr)}
 \quad \text{and} \quad
 F = \frac{\C\pp{x,y,z}}{\bigl( (z-x)(z^2-y), z^2-y+2z(z-x) \bigr)} \, .
\]
As in the case of triple points, the module~$F$ is generated by~$[1]$, $[z]$,
and~$[z^2]$. We get quotient representations for these elements in an analogous
way (namely, by polynomial division): 
\[
 [z] = \frac{[4xy]}{[3y+x^2]} \, .
\]
\end{itemize}

\begin{lemma}
\label{lemma:conductor}
 For each of the seven types of special points of the fat silhouette~$C$, the 
conductor ideals of the normal forms provided above are:

\textup{
 \begin{center}
 \begin{tabular}{@{}cc@{}}
  \toprule
  Type of singularity & Conductor ideal \\ \midrule
  Nodes of the proper silhouette & $ \bigl( [x],[y] \bigr)$ \\[1ex]
  Cusps of the proper silhouette & $ \bigl( [x],[y] \bigr)$ \\[1ex]
  Nodes of the singular image & $ \bigl( [x^2],[y^2] \bigr)$ \\[1ex]
  \multirow{2}{*}{Triple points of the singular image} & $\bigl( 
[x^2-xy+y^2],$ \\ & $[xy(x+y)] \bigr)$ \\[1ex]
  Transverse intersections of prop.\ silhouette and sing.\ image  & 
\multirow{2}{*}{$\bigl( [x], [y^2] \bigr)$} \\ whose 
preimages are two distinct points \\[1ex]
  Transverse intersections of prop.\ silhouette and sing.\ image & 
\multirow{2}{*}{$\bigl( [1] \bigr)$} \\ whose preimages are pinch points  
\\[1ex]
  Tangential intersections of prop.\ silhouette and sing.\ image & $\bigl( 
[xy], [3y+x^2] \bigr)$ \\
  \bottomrule
 \end{tabular}
 \end{center}
}
\end{lemma}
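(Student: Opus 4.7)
The plan is to verify each of the seven cases by direct computation in the normal forms of $E$ and $F$ derived just above. The conductor $\mathfrak{c}$ coincides with $\mathrm{Ann}_E(F/E)$, so it suffices to check for each proposed generator $w$ of the ideal in the table that $w\xi \in E$ for every $E$-module generator $\xi$ of $F$, and then to show that no element outside the stated ideal has this property. As $E$-module generators of $F$ I take $[1]$ alone in the pinch-point case (where in fact $E = F$, so $\mathfrak{c} = ([1])$); $[1]$ and $[z]$ in four further cases (nodes and cusps of $B$, nodes of $W$, transverse intersections with distinct preimages); and $[1]$, $[z]$, $[z^2]$ in the triple-point and tangential-intersection cases. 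The explicit fraction representations of $[z]$ (and of $[z^2]$) over the total ring of fractions of $E$ established in the normal-form discussion are the main computational tool.

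In each of the six non-trivial rows the verification splits in two parts. For containment, I multiply each proposed generator $w$ by $[z]$ (and $[z^2]$ where relevant) inside $F$ using the defining relations, and check that the result lies in the image of $E$. For example, at a node of $B$ the relations $xz = 0$ and $y(1-z) = 0$ give immediately $[x][z] = 0 \in E$ and $[y][z] = [y] \in E$; at a node of $W$ the same computation works with $[x^2]$ and $[y^2]$; for a cusp, the embedding $E \hookrightarrow F = \C\pp{z}$ sends $[x] \mapsto z^2$ and $[y] \mapsto z^3$, and both $z^2 \C\pp{z}$ and $z^3 \C\pp{z}$ are evidently contained in $\C\pp{z^2, z^3} = E$; in the triple-point and tangential-intersection cases one uses both defining relations of $F$ to reduce any product with $[z^2]$ to an $E$-combination of $[1]$ and $[z]$, after which the verification proceeds as before.

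For generation, I write a general $e = \sum a_{ij} [x^i y^j] \in E$, expand $e[z]$ (and $e[z^2]$) in $F$, and identify the finitely many monomial coefficients in the result that cannot be matched by the image of any element of $E$; setting these to zero gives linear conditions on the $a_{ij}$ whose common solution set is precisely the ideal listed in the table. I expect the main obstacle to lie in the triple-point and tangential-intersection rows, where $F$ has the extra generator $[z^2]$ over $E$, forcing vanishing conditions to be derived from both $e[z]$ and $e[z^2]$, and where the conductor generators are not monomials but specific polynomial combinations; in the triple-point case these reflect the $S_3$ symmetry permuting the three local branches of $F$. An independent sanity check is to verify the expected equality between $\dim_\C(F/E)$ computed from the presentations and the colength $\dim_\C(E/\mathfrak{c})$ of the stated ideal computed inside $E = \C\pp{x,y}/(h)$.
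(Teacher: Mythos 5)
Your proposal is correct and rests on the same conceptual core as the paper's proof: the conductor is $\mathrm{Ann}_E(F/E)$, one observes that $F$ is generated over $E$ by $[1]$ and $[z]$ (the degree-two relation eliminates $[z^2]$ even in the triple-point and tangential cases, so your third generator is redundant), and one determines case by case which $e\in E$ satisfy $e[z]\in E$. Where you differ is purely in bookkeeping: the paper substitutes the fraction representation $[z]=[p]/[q]$ into the membership condition and turns it into an intersection-of-ideals and colon-ideal computation carried out by computer algebra (e.g.\ for the node of $B$, $(y,xy)\cap(x+y,xy)=(xy,y^2)$ and then $(xy,y^2):(y)=(x,y)$), whereas you propose to expand $e[z]$ directly in $F$ using the defining relations and read off linear obstructions on the coefficients of $e$. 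Both are valid; your route is more elementary and avoids a CAS for the easy rows, but you should be prepared for the ``generation'' direction in the triple-point and tangential rows to require pinning down a $\C$-basis of $F/E$ (equivalently the submodule $E\cap\C\pp{x,y}z$ of $F$), which is exactly the non-obvious step the paper delegates to Gr\"obner computations; your proposed colength sanity check $\dim_\C(F/E)=\dim_\C(E/\mathfrak c)$ is a good guard against errors there.
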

\begin{proof}
 We analyze each case separately.
 \begin{itemize}[leftmargin=*]
 \item[$\cdot$] \emph{Nodes of the proper silhouette}. Since $F$ is generated 
over $E$ by $[1]$ and $[z]$, the conductor ideal is $\{ [\alpha] \in E 
\,\colon\, [\alpha z] \in E\}$. Hence we look for $[\alpha] \in E$ such that 
$[\alpha y] = [\beta (x+y)]$ for some $\beta \in \C\pp{x,y}$ (recall the 
description of~$[z]$ as a quotient of elements of~$E$). We calculate (in the 
standard polynomial ring, by means of computer algebra) the intersection of the 
two ideals~$(y, xy)$ and~$(x+y, xy)$, which is~$(xy, y^2)$. This implies that 
the conductor ideal is~$([x],[y])$, because this equals the colon ideal~$(xy, 
y^2) \colon (y)$.
 \item[$\cdot$] \emph{Cusps of the proper silhouette}. As in the previous case, 
it is enough to compute the intersection of the two ideals~$(y, x^3 - y^2)$ 
and~$(x,x^3 - y^2)$, which is~$(y^2, xy, x^3)$. From this it follows that the 
conductor ideal is~$([x],[y])$.
 \item[$\cdot$] \emph{Nodes of the singular image}. This case is analogous to 
the one of nodes of the proper silhouette.
 \item[$\cdot$] \emph{Triple points of the singular image}. This case is 
analogous to the one of nodes of the proper silhouette.
 \item[$\cdot$] \emph{Transverse intersections of proper silhouette and 
singular image whose preimages are two distinct points}. This case is analogous 
to the one of nodes of the proper silhouette.
 \item[$\cdot$] \emph{Transverse intersections of proper contour and singular 
images whose preimages are pinch points}. Since here $E = F$, the conductor is 
the trivial ideal. 
 \item[$\cdot$] \emph{Tangential intersections of proper silhouette and 
singular image}. This case is analogous to the one of nodes of the proper 
silhouette. 
\end{itemize}
\end{proof}

One could think that \Cref{lemma:conductor} provides a way to compute the 
conductor ideals of the special points from the knowledge of the fat 
silhouette: one could think, in fact, of bringing each of the special points to 
the corresponding normal form, and then pick the conductor ideal from the 
table. This would not be correct, since by knowing only the fat silhouette we 
do not have control on the fat contour, and so we cannot ensure that the 
preimages of the special points are in normal form. This seems a hindrance to 
the creation of an algorithm having as input only the fat silhouette, because 
the conductor ideal may depend on the fat contour. We now show that this is not 
the case.

\begin{lemma}
\label{lemma:conductor_invariant}
 The conductor ideals at the special points depend only on the fat silhouette. 
\end{lemma}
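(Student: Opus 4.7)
The plan is to exhibit, case by case, equivariant formulas for the conductor ideal at each of the seven types of special points in which the only input is the local equation of the fat silhouette $C$, together with the pinch-point flag assigned to those transverse intersections of $B$ and $W$ that come from pinch points. Since $C$ and this flag are the only data used, the conductor ideals will then be intrinsic invariants of the fat silhouette, which is exactly what the lemma asserts.

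The first step is to argue that, locally at a special point $c$, the $E$-algebra $F = \bigoplus_{y_i \mapsto c} \hat{\OO}_{Y,y_i}$ is itself determined up to analytic $E$-algebra isomorphism by the type of $c$ (and the flag). This is a direct consequence of the normal-form analysis carried out before Lemma \ref{lemma:conductor}: above a reduced node or a cusp of $B$ the fat contour is the normalization of $E$; above a non-reduced node of $W$ it has two smooth branches mapping with multiplicity two; above a triple point of $W$ three such branches; above a transverse intersection it is either the normalization (pinch-point case) or splits into two branches of the appropriate multiplicities; and so on. Since the conductor $(E : F)$ is a purely ring-theoretic invariant of the extension $E \hookrightarrow F$, once $E$ and the flag are fixed it is forced.

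The second step is to rewrite the entries of the table in Lemma \ref{lemma:conductor} in coordinate-free fashion, using only the irreducible factors of the square-free part of the local equation $h$ of $C$. For nodes and cusps of $B$ the conductor is the maximal ideal $\mathfrak{m}_E$, which is manifestly intrinsic. For a non-reduced node of $W$ with $h = u^2 v^2$ it is $(u^2, v^2)$, built symmetrically from the two branch ideals. For a non-pinch transverse intersection with $h = u \cdot v^2$ (the branch of $B$ and the doubled branch of $W$) it is $(u, v^2)$. For a pinch-point transverse intersection it is the unit ideal. For a triple point of $W$ with $h = u^2 v^2 w^2$ the normal-form generators $[x^2 - xy + y^2]$ and $[xy(x+y)]$ can be recast as a symmetric combination of the three branches, and similarly for the tangential intersection, where the generators $[xy]$ and $[3y + x^2]$ can be reconstructed from the branch of $B$, the doubled branch of $W$, and the order-two tangency datum encoded in $h$ itself.

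The main obstacle is the equivariance verification for the triple point of $W$ and the tangential intersection, where the analytic automorphism group of $C$ at $c$ is non-trivial: at a triple point any permutation of the three branches is realized by an analytic automorphism of $E$, and at a tangential intersection there is a one-parameter family of coordinate changes preserving the tangency. For these two cases one must show that the proposed generating sets are stable, modulo $(h)$, under the full group of such automorphisms. I would carry this out by taking a generic analytic automorphism preserving the branch configuration, expanding its action on the proposed generators in $\hat{\OO}_{C,c}$, and reducing modulo $(h)$ to confirm that the resulting elements still lie in the ideal; the authors note that these symbolic checks are contained in the accompanying Maple package, and invoking them completes the proof.
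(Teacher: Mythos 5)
Your overall strategy --- reduce to normal forms, then produce equivariant descriptions of the conductor in terms of the local equation of the fat silhouette and the pinch-point flag --- is exactly the route the paper takes. However, the two places where the paper does real work are precisely the two places your proposal stays vague, and there is also a small logical gap in your Step~1.

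First, the logical gap. You argue that the normal-form analysis determines $F$ as an $E$-algebra up to analytic isomorphism, and conclude that the conductor is ``forced.'' But the normal-form analysis only shows that any occurrence of a given type of special point is \emph{some} coordinate change away from the normal form; if two surfaces $S$ and $S'$ have the same fat silhouette, the two extensions $E\hookrightarrow F$ and $E\hookrightarrow F'$ at a special point $c$ are related by an automorphism $\psi$ of $E$, not necessarily the identity. Then $\mathfrak{c}(F'/E)=\psi\bigl(\mathfrak{c}(F/E)\bigr)$, so to conclude equality of ideals one still needs to know that $\psi$ fixes the conductor --- and this is exactly the equivariance statement that Steps~2 and~3 are supposed to deliver. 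Step~1 is therefore not a shortcut past Steps~2 and~3. Relatedly, your description of $F$ at a triple point or at a tangential intersection as ``three such branches'' or an unspecified splitting is inaccurate: in those two cases $\pi^{-1}(c)$ is a single point, so $F$ is a \emph{single} complete local ring with several analytic branches rather than a direct product of primary components, and the intrinsic reconstruction of $F$ from $E$ is genuinely less transparent than in the node/cusp/transverse cases.

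Second, the missing formulas. For a triple point with local factorization $h=h_1 h_2 h_3$ normalized so that $h_1+h_2+h_3=0$, the paper exhibits the coordinate-free ideal
\[
J=\bigl(\,a_1^2+a_2^2+a_3^2 \;\colon\; a_i\in(h_i),\; a_1+a_2+a_3=0\,\bigr),
\]
and verifies by a direct computation that $J$ equals $(x^2-xy+y^2,\,xy(x+y))$ in the normal form. For a tangential intersection, the paper does not find an algebraic ``symmetric combination'' at all; instead it characterizes the conductor via controlled transforms under two successive blow-ups, and the equivariance comes from the fact that an analytic change of coordinates extends uniquely to an automorphism of the second exceptional $\p^1$, which must preserve its three marked intersection points. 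Your phrases ``recast as a symmetric combination of the three branches'' and ``reconstructed from \dots the order-two tangency datum encoded in $h$ itself'' gesture at these constructions but do not supply them, and the appeal to the Maple package covers only the verification that a proposed formula works, not the discovery of the formula. As written, the proposal therefore identifies the right plan but leaves out the two ideas (the $a_1^2+a_2^2+a_3^2$ trick with the normalization $h_1+h_2+h_3=0$, and the blow-up interpretation) that actually close the argument.
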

\begin{proof}
 We show that the ideals determined in \Cref{lemma:conductor} for the 
normal forms are equivariant under analytic changes of coordinates in the plane, 
thus proving the statement.
 \begin{itemize}[leftmargin=*]
 \item[$\cdot$] \emph{Nodes of the proper silhouette}. In this case, the 
conductor ideal is just the maximal ideal of $\hat{\OO}_{C,c}$.
 \item[$\cdot$] \emph{Cusps of the proper silhouette}. Same situation as for 
the nodes.
 \item[$\cdot$] \emph{Nodes of the singular image}. Here the conductor ideal is 
the sum of the squares of the two ideals defining the two analytic components of the node.
 \item[$\cdot$] \emph{Triple points of the singular image}.
 Let $f$ be an analytic local equation of the fat silhouette at a triple point. 
We then know that we can write $f = h_1 h_2 h_3$ with $h_1 + h_2 + h_3 = 0$ for 
some power series $\{ h_i \}$ of order one. We prove that the conductor ideal 
equals 
\[
 J := \bigl( a_1^2 + a_2^2 + a_3^2 \, \colon \, a_i \in (h_i) \text{ for } i 
\in \{1,2,3\} \text{ and } a_1+a_2+a_3 = 0 \bigr) \,.
\]
Since the latter ideal has a formulation that is equivariant under analytic 
changes of coordinates, it is enough to check that $J$ coincides with the 
conductor ideal in the situation of the normal form, namely when
\[
 h_1 = -x, \quad h_2 = y, \quad h_3 = x - y \,.
\]
Recall that in this case the conductor ideal is $I = (x^2 - xy + y^2, x^2y + 
xy^2)$. We first show the containment $J \subset I$. Consider an element 
in~$J$, namely pick
\[
 a_1 = -\alpha x, \quad a_2 = \beta y, \quad a_3 = \gamma (x-y) = \alpha x - 
\beta y \,.
\]
 for some $\alpha, \beta, \gamma \in \C\pp{x,y}$. This forces $\alpha = \gamma 
 - uy$ and $\beta = \gamma + ux$ for some $u \in \C\pp{x,y}$. A direct 
computation shows that 
\[
 a_1^2 + a_2^2 + a_3^2 = 2 \gamma^2 (x^2 - xy + y^2) + 2\gamma u (x^2y + 
xy^2) + 2u^2 x^2 y^2
\]
and hence $a_1^2 + a_2^2 + a_3^2 \in I$, since one can check that $I$ contains 
$(x,y)^4$. To prove the 
opposite inclusion, it is enough to show that $x^2 - xy + y^2$ and $x^2 y + 
xy^2$ are in~$J$. The first case is immediate, since $2(x^2 - xy + y^2) = 
(-x)^2 + y^2 + (x-y)^2$. For the second element, it is enough to pick the 
two triples $(a_1, a_2, a_3)$ corresponding to $(\gamma,u) = (1,1)$ and to 
$(\gamma,u) = (1,-1)$, and to subtract the corresponding sums of squares.
 \item[$\cdot$] \emph{Transverse intersections of proper silhouette and 
singular image whose preimages are two distinct points}. Here the conductor 
ideal is the sum of the ideal of the proper silhouette and of the square of the 
ideal of the singular image.
 \item[$\cdot$] \emph{Transverse intersections of proper contour and singular 
images whose preimages are pinch points}. In this case the conductor is the 
trivial ideal.
 \item[$\cdot$] \emph{Tangential intersections of proper silhouette and 
singular image}. As we did in the case of triple points of the singular image, we 
provide an equivariant description of the conductor ideal. Consider the situation 
of the normal form, where the conductor ideal is $I = (xy, 3y+x^2)$. Notice 
that it equals the ideal 
\[
 J :=
 \bigl\{ 
  a \in \C \pp{x,y} \, \colon \, a(0,0) = 0 
  \text{ and }
  \operatorname{mult}_{(0,0)} (a, 3y + x^2) \geq 3 
 \bigr\} \,. 
\]
We show that the latter description is equivariant under changes of analytic 
coordinates. Consider the following setting (see 
\Cref{figure:tangential_invariant}): pick an analytic neighborhood of a 
tangential intersection of proper silhouette and singular image, and apply to 
it an analytic isomorphism. Blow up the two analytic neighborhoods at the 
tangential intersection; the previous analytic isomorphism then extends to an 
isomorphism of two neighborhoods of the exceptional divisors, which restricts 
to an automorphism of~$\p^1$ on the exceptional divisors. After the blow up, 
the strict transforms of proper silhouette and singular image intersect 
transversally, and the exceptional divisor passes through that point of 
intersection. A further blow up separates these three curves and introduces a 
second exceptional divisor intersecting each of them transversally.
\begin{figure}
 \begin{overpic}[width=\textwidth]{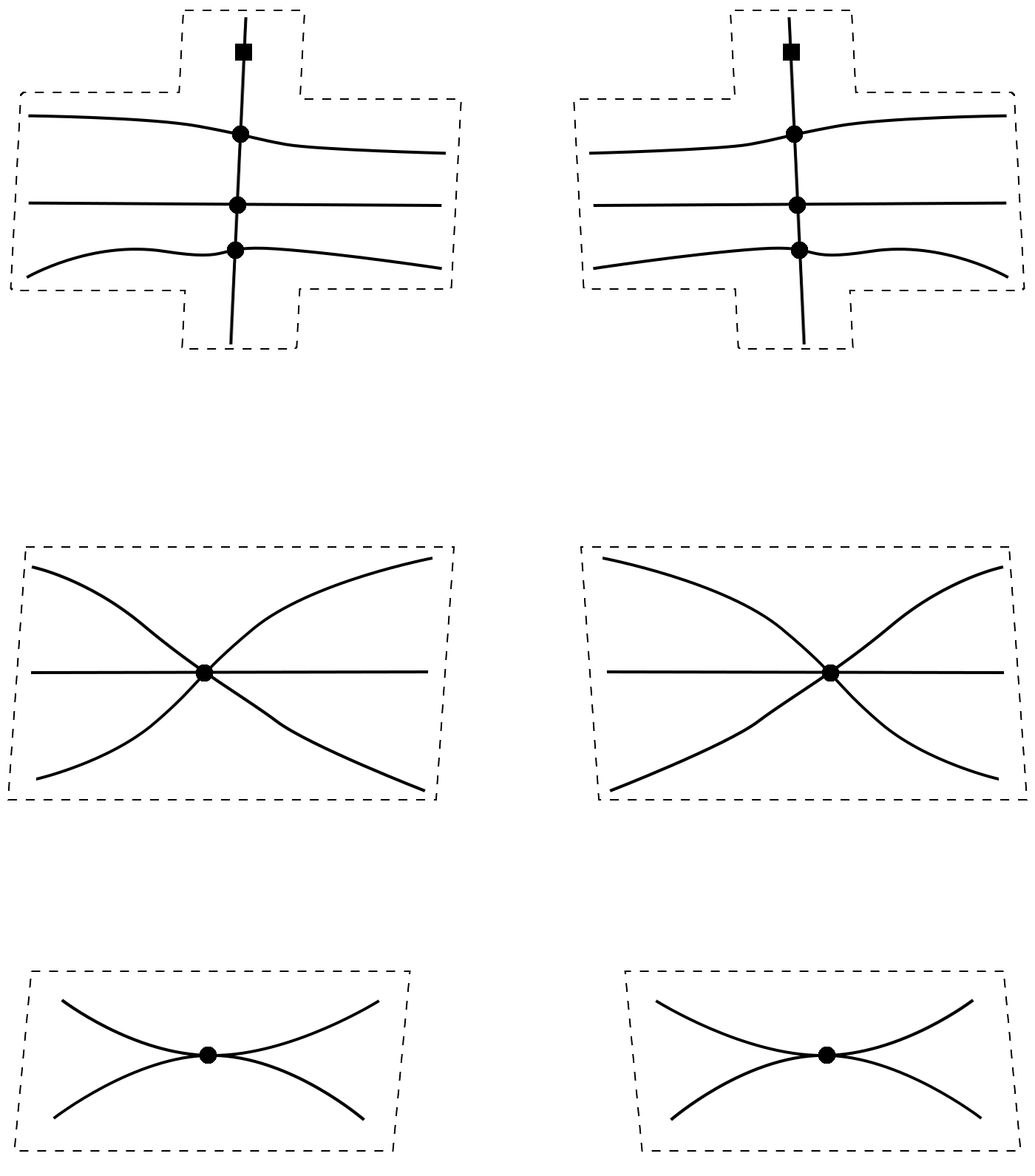}
  \put(39,10){$\xymatrix@C=1cm{ \ar[r]^{\cong} &}$}
  \put(39,42){$\xymatrix@C=1cm{ \ar[r]^{\cong} &}$}
  \put(39,82){$\xymatrix@C=1cm{ \ar[r]^{\cong} &}$}
  \put(18,30){$\xymatrix@R=1.5cm{\mbox{} \ar[d]^{p_1} \\ \mbox{}}$}
  \put(18,68){$\xymatrix@R=1.5cm{\mbox{} \ar[d]^{p_2} \\ \mbox{}}$}
  \put(68,30){$\xymatrix@R=1.5cm{\mbox{} \ar[d]^{\hat{p}_1} \\ \mbox{}}$}
  \put(68,68){$\xymatrix@R=1.5cm{\mbox{} \ar[d]^{\hat{p}_2} \\ \mbox{}}$}
  \put(16,6){$P$}
  \put(16,38){$Q$}
  \put(17,95){$R$}
  \put(4,44){$E_1$}
  \put(4,84){$E_1'$}
  \put(21,72){$E_2$}
  \put(26,13){$B$}
  \put(26,4){$W$}
  \put(32,48){$B'$}
  \put(32,36){$W'$}
  \put(32,88){$B''$}
  \put(32,79){$W''$}
  \put(69,6){$\hat{P}$}
  \put(69,38){$\hat{Q}$}
  \put(69,95){$\hat{R}$}
  \put(82,44){$\hat{E}_1$}
  \put(82,84){$\hat{E}'_1$}
  \put(64,72){$\hat{E}_2$}
  \put(60,13){$\hat{B}$}
  \put(60,4){$\hat{W}$}
  \put(53,48){$\hat{B}'$}
  \put(53,36){$\hat{W}'$}
  \put(53,88){$\hat{B}''$}
  \put(53,79){$\hat{W}''$}
 \end{overpic}
 \caption{Schematic view of the construction used to prove independence of the 
conductor ideal from the fat contour in the case of tangent intersections: we 
blow up the two curves~$B$ and~$W$ at their intersection~$P$, obtaining an 
exceptional divisor~$E_1$, which intersects the strict transforms~$B'$ and~$W'$ 
in a point~$Q$. Blowing up again we introduce another exceptional 
divisor~$E_2$. The conductor ideal can be interpreted as the ideal 
of functions vanishing at~$P$, whose controlled transforms vanish at~$Q$ and at 
a point~$R$ of~$E_2$. If we have an analytic isomorphism around the tangent 
intersection, the fact that it extends to an isomorphism on exceptional 
divisors proves that the image of~$R$ is prescribed, since the isomorphism $E_2 
\longrightarrow \hat{E}_2$ is an automorphism of~$\p^1$ which must preserve the 
intersections of~$E_2$ with the strict transforms~$E_1'$, $B''$ and $W''$.}
 \label{figure:tangential_invariant}
\end{figure}
Let us use the following notation: we denote by~$p_1$ the first blow up map, 
by~$E_1$ its exceptional divisor, by~$B'$ and~$W'$ the strict transforms of~$B$ 
and~$W$; we denote by~$p_2$ the second blow up map, by~$E_2$ its exceptional 
divisor, and by~$E_1'$, $B''$ and~$W''$ the strict transforms 
of~$E_1$, $B'$ and~$W''$. Let $P$ be the tangential intersection of~$B$ 
and~$W$, let $Q$ be the intersection of~$B'$, $W'$ and $E_1$. We show 
that the ideal~$J$ coincides with 
\[
 K :=
 \bigl\{ 
  a \in \C \pp{x,y} \, \colon \, 
  a(P) = 0, \, 
  p_1^{!}(a)(Q) = 0, \, 
  (p_2 \circ p_1)^{!}(a)(R) = 0 
 \bigr\} \,,
\]
where $p_1^{!}(a)$ is the \emph{controlled transform} of~$a$ under the blow up 
map~$p_1$, and $R$ is a point on~$E_2$. To show this, notice that $I$ is 
analytically equivalent to the ideal~$(x^3, y)$, and so $J$ can be described as 
the ideal of functions~$a$ that vanish at~$(0,0)$ and such that 
$\operatorname{mult}_{(0,0)} (a, y) \geq 3$, namely that are of the form $a = 
\sum_{i,j} a_{ij} x^i y^j$ with $i \geq 3$ or $j \geq 1$. Let us compute~$K$ in 
this new setting: the condition $a(P) = 0$ implies that if $a = 
\sum_{i,j} a_{ij} x^i y^j$, then $i \geq 1$ or $j \geq 1$. The controlled 
transform $p_1^{!}(a)$ equals, in the chart with coordinates $(x,\tilde{y})$ 
with $y = \tilde{y}x$, the function $\sum_{i,j} a_{ij} x^{i+j-1} \tilde{y}^j$ 
--- we 
subtract $1$ in the exponent of $x$ since in these coordinates $p_1^{!}(a) = 
p_1^{\ast}(a) / x$ because $x = 0$ is the equation of the exceptional divisor; 
so, the condition $p_1^{!}(a)(Q) = 0$ translates into $i + j - 1 \geq 1$ or $j 
\geq 1$. Similarly, the condition $(p_2 \circ p_1)^{!}(a)(R) = 0$ translates to 
$i + 2j - 2 \geq 1$ or $j \geq 1$. One can check that the conjunction of these 
three conditions is equivalent to the condition defining~$J$. Hence the 
conductor ideal~$I$ equals~$K$, and we see that the description of~$K$ is 
equivariant under local analytic changes of coordinates, because, as already 
mentioned, any such change extends to an isomorphism at the level of the 
exceptional divisors~$E_2$, which are projective lines; since this isomorphism 
must preserve the intersections of~$E_2$ with $E_1'$, $B''$ and~$W''$, it is 
uniquely determined, and so the same holds for the image of~$R$ under it. This 
proves that the formation of the conductor ideal is equivariant.

\end{itemize}
\end{proof}

We conclude this appendix providing formulas to compute the conductor ideals of 
special points without the need of bringing the equation of the fat silhouette 
to a normal form. The proof of \Cref{lemma:conductor_invariant} clarifies 
how to do so in the case of nodes and cusps of the proper silhouette, and of 
transversal intersections of proper silhouette and singular image (coming both 
from pinch points or pairs of distinct points). We are hence 
left with:
\begin{description}
 \item[Nodes of the singular image]
 If $f$ is a local analytic equation of the silhouette, namely of the 
reduced structure of the fat silhouette, one sees that the conductor ideal in 
the normal form is generated by the $2 \times 2$ minors of the matrix
\[
\begin{pmatrix}
 \partial_{xx} f & \partial_{xy} f & \partial_{yy} f & \partial_x f & 
\partial_y f \\
 \partial_{xx} (f^2) & \partial_{xy} (f^2) & \partial_{yy} (f^2) & \partial_x 
(f^2) & \partial_y (f^2) 
\end{pmatrix} \, .
\]
 We show that these formulas are equivariant under analytic changes of 
coordinates, and so they can be used for any node of the singular image. First 
of all, notice that the conductor ideal for the normal form contains the 
ideal~$(x,y)^3$. 
This means that the conductor ideal always contains the third power of the 
maximal ideal of the point. Hence, in order to prove that the formulas we give 
are equivariant, it is enough to consider their part of order at most two. 
Locally analytically, the function~$f$, which is of order two at the node, 
splits as a product $f = h_1 h_2$, where each~$h_i$ has order one. We show that 
any perturbation of the~$h_i$ by an element of order at least two does not 
influence the equivariance property. In fact, suppose that we write $h_1 = 
\tilde{h}_1 + \varepsilon$, where $\tilde{h}_1$ has order one and $\varepsilon$ 
has order two. Then $f = \tilde{h}_1 h_2 + \varepsilon h_2$, and so 
$\varepsilon h_2$ has order three. This means that any of the second 
derivatives of~$f$ will be affected by a perturbation of order one. With 
similar computations, one sees that the second derivatives of~$f^2$ are 
affected by a perturbation of order two. This implies that any minor of the 
previous matrix is affected by a perturbation of order at least three, which 
can be ignored since the conductor ideal contains the whole third power of the 
maximal ideal. 
Hence, in order to prove equivariance, it suffices to check that the formula 
we propose is equivariant under all coordinate changes of the form
\[ 
 \begin{pmatrix}
  x \\ y
 \end{pmatrix}
 \mapsto 
 \begin{pmatrix} 
  a_1 \, x + a_2 \, y \\
  a_3 \, x + a_4 \, y 
 \end{pmatrix} \,,
\]
namely that it always provides the conductor ideal, which is given by~$(h_1^2, 
h_2^2)$.
These checks can performed for symbolic parameters $a_1, \dotsc, a_4$ with 
the help of a computer algebra system. We implemented these tests in a Maple 
script inside the package we developed, see the introduction for the 
Internet address where to find the code.
 \item[Triple points of the singular image]
 If $f$ is a local analytic equation for the silhouette, then one can check 
that the conductor ideal in the normal form is generated by the $2 \times 2$ 
minors of the matrix
\[
\begin{pmatrix}
 \partial_{xxx} f & \partial_{xxy} f & \partial_{xyy} f & \partial_{yyy} f \\
 \partial_{xxx} (f^2) & \partial_{xxy} (f^2) & \partial_{xyy} (f^2) & 
\partial_{yyy}(f^2)
\end{pmatrix}
\]
together with
\begin{align*}
 3\, f \, f_{xxx}\, f_{xyy}\, f_{xyyy} - 
 3\, f \, f_{xxy}^{2} f_{xyyy} - 
 3\, f \, f_{xxx} \, f_{yyy} \, f_{xxyy} + 
 3\, f \, f_{xxy} \, f_{xyy} \, f_{xxyy} + \\
 3\, f \, f_{xxy} \, f_{yyy} \, f_{xxxy} - 
 3\, f \, f_{xyy}^{2} \, f_{xxxy} + 
 2\, f_{xx} \, f_{xy} \, f_{xxy} \, f_{yyy} - 
 2\, f_{xx} \, f_{xy} \, f_{xyy}^{2} + \\
 2\, f_{xx} \, f_{yy} \, f_{xxx} \, f_{yyy} - 
 2\, f_{xx} \, f_{yy} \, f_{xxy} \, f_{xyy} - 
 4\, f_{xy}^{2} \, f_{xxx} \, f_{yyy} + 
 4\, f_{xy}^{2} \, f_{xxy} \, f_{xyy} + \\
 2\, f_{xy} \, f_{yy} \, f_{xxx} \, f_{xyy} - 
 2\, f_{xy} \, f_{yy} \, f_{xxy}^{2} \, .
\end{align*}
The last element has been computed by imposing that a symbolic linear 
combination of a list of candidates is in the conductor ideal for several 
randomized examples. As for the case of nodes, we show that these formulas are 
equivariant under analytic changes of coordinates. Since the conductor ideal of 
the normal form contains the ideal~$(x,y)^4$, the conductor ideal always 
contains the fourth power of the maximal ideal of the point, and so we can 
neglect contributions of order at least four in the formulas. We know that for 
a triple point we always have, locally analytically, the factorization $f = h_1 
h_2 (h_1 - h_2)$, where each~$h_i$ has order one. Similarly as before, a direct 
inspection of the formulas shows that a perturbation of order at least three of 
the~$h_i$ determines a perturbation of order at least four in the formula. 
Thus it is sufficient to check that the formulas are equivariant under changes 
of coordinates of the form
\[ 
 \begin{pmatrix}
  x \\ y
 \end{pmatrix}
 \mapsto 
 \begin{pmatrix} 
  a_1 \, x + a_2 \, y + b_1 \, x^2 + b_2 \, xy + b_3 \, y^2, \\
  a_3 \, x + a_4 \, y + b_4\,  x^2 + b_5 \, xy + b_6 \, y^2
 \end{pmatrix} \,,
\]
namely that they always provide the conductor ideal, which is given by
\[
 \bigl(
  h_1^2 + h_2^2 + (h_1 - h_2)^2, \,
  h_1^2 h_2 + h_1 h_2^2
 \bigr) \,.
\]
This is checked symbolically with the aid of computer algebra.
 \item[Tangential intersections of proper silhouette and singular image]
 In this case, if $f$ and $g$ are local analytic equations of 
the proper silhouette and of the singular image,  the 
conductor ideal in the normal form is given by
\[
 \bigl( 
  fg, \,
  4f \tth \partial_y g - g \tth \partial_y f, \,
  4f \tth \partial_x g - g \tth \partial_x f
 \bigr) \, . 
\]
The proof of equivariance follows as in the previous cases.
\end{description}

\section*{Acknowledgments}
We thank Kristian Ranestad for pointing out to us the work of Chisini in 
occasion of the workshop ``Meeting on Algebraic Vision'' organized at TU Berlin 
on October 8-9, 2015. Matteo Gallet thanks Emilia Mezzetti and Dario Portelli 
for providing several useful suggestions and references to the existing 
literature about the questions investigated in this paper. Some of the pictures 
of the surfaces have been realized by the free software 
\texttt{surfex}~\cite{surfex}, and others by the free software 
\texttt{POV-Ray}~\cite{povray}.

\end{document}